\newtheorem{thm}{Theorem}[section]
\newtheorem{cor}[thm]{Corollary}
\newtheorem{lem}[thm]{Lemma}
\newtheorem{prop}[thm]{Proposition}
\theoremstyle{definition}\newtheorem{defn}[thm]{Definition}
\theoremstyle{definition}\newtheorem{rem}[thm]{Remark}
\theoremstyle{definition}\newtheorem{ex}[thm]{Example}
\newcounter{mycount}
\newcommand{\GF}{{\rm GF}}
\newcommand{\Irr}{{\rm Irr}}
\newcommand{\GAP}{\textbf{{\rm GAP}}}
\newcommand{\GQ}{{\rm GQ}}
\newcommand{\Cay}{{\rm Cay}}
\newcommand\rk{{\rm rk}\;}
\newcommand{\calL}{\mathcal{L}}
\newcommand{\calO}{\mathcal{O}}
\renewcommand\le{\leqslant}
\renewcommand\ge{\geqslant}
\newcommand{\Q}{\mathbb{Q}}
\newcommand{\C}{\mathbb{C}}
\newcommand{\Z}{\mathbb{Z}}
\renewcommand{\S}{{\mathrm S}}
\newcommand{\frakR}{\mathfrak{R}}
\newcommand{\repF}{\mathfrak{F}}
\newcommand{\p}{\mathfrak{p}}
\newcommand{\z}{\zeta}
\newcolumntype{P}[1]{>{\centering\arraybackslash}p{#1}}
\begin{document}

\title{Character theoretic techniques for nonabelian partial difference sets}

\author{Seth R. Nelson}
\address{Department of Mathematics, University of Georgia, Athens, GA 30602}
\email{sn57661@uga.edu}

\author{Eric Swartz}
\address{Department of Mathematics, William \& Mary, Williamsburg, VA 23187}
\email{easwartz@wm.edu}

\begin{abstract}
A $(v,k,\lambda, \mu)$-partial difference set (PDS) is a subset $D$ of size $k$ of a group $G$ of order $v$ such that every nonidentity element $g$ of $G$ can be expressed in either $\lambda$ or $\mu$ different ways as a product $xy^{-1}$, $x, y \in D$, depending on whether or not $g$ is in $D$. If $D$ is inverse closed and $1 \notin D$, then the Cayley graph $\Cay(G,D)$ is a $(v,k,\lambda, \mu)$-strongly regular graph (SRG). PDSs have been studied extensively over the years, especially in abelian groups, where techniques from character theory have proven to be particularly effective. Recently, there has been considerable interest in studying PDSs in nonabelian groups, and the purpose of this paper is to develop character theoretic techniques that apply in the nonabelian setting. We prove that analogues of character theoretic results of Ott \cite{ott} about generalized quadrangles of order $s$ also hold in the general PDS setting, and we are able to use these techniques to compute the intersection of a putative PDS with the conjugacy classes of the parent group in many instances. With these techniques, we are able to prove the nonexistence of PDSs in numerous instances and provide severe restrictions in cases when such PDSs may still exist. Furthermore, we are able to use these techniques constructively, computing several examples of PDSs in nonabelian groups not previously recognized in the literature, including an infinite family of genuinely nonabelian PDSs associated to the block-regular Steiner triple systems originally studied by Clapham \cite{clapham} and related infinite families of genuinely nonabelian PDSs associated to the block-regular Steiner $2$-designs first studied by Wilson \cite{wilson}.
\end{abstract}

\maketitle

\section{Introduction}

If $G$ is a group, then we say that $D \subset G$ is a $(v, k, \lambda, \mu)$-\textit{partial difference set} (PDS) if $G$ has order $v$, $D$ has size $k$, and if the product $gh^{-1}$ for all $g, h \in D$ with $g \neq h$ represents all nonidentity elements in $D$ exactly $\lambda$ times and all nonidentity elements in $G - D$ exactly $\mu$ times. We refer to the tuple $(v, k, \lambda, \mu)$ as the {\it parameters} of $D$. In addition, we call a PDS {\it regular} if $D$ is closed under taking inverses (in which case we write $D = D^{(-1)}$) and $1 \not \in D$. This definition is not particularly restrictive, for if $D$ is a PDS with $1 \in D$, then $D - \{1\}$ is a PDS as well, and if $\lambda \neq \mu$, then $D = D^{(-1)}$. If we have $\lambda = \mu$ then $D$ is called a difference set, and we say that $D$ is a $(v, k, \lambda)$-\textit{difference set} (DS). We additionally call $D$ a \textit{reversible} $(v, k, \lambda)$-DS if $D = D^{(-1)}$.

PDSs have been the subject of great interest over the years (see, e.g., \cite{ma}), especially in abelian groups.  In the abelian group setting, character theoretic methods have proven to be particularly effective.  For example, if we extend an irreducible character $\chi$ for an abelian group $G$ in the natural way to the group algebra $\C[G]$ and identify an inverse-closed subset $D \subset G$ with $\sum_{d \in D} d \in \C[G]$, then $D$ is a $(v, k, \lambda, \mu)$-PDS if and only if
\begin{equation}
\label{eq:abelchar}
\chi(D) = \begin{cases}
              k, \text{ if $\chi$ is the trivial character,}\\
              \frac{(\lambda-\mu) \pm \sqrt{\Delta}}{2}, \text{ otherwise,}
             \end{cases}
\end{equation}
where $\Delta \colonequals (\lambda - \mu)^2 + 4(k - \mu)$ (see \cite[Corollary 3.3]{ma}).

There has been interest recently in PDSs in nonabelian groups \cite{Davis_etal_2023, Feng_He_Chen_2020, Swartz_etal_2024, swartz-tauscheck}; for a recent survey of known results related to PDSs in nonabelian groups, see \cite{pol-dav-smi-swa}. However, most character theoretic results that apply in the abelian case do not hold in the nonabelian case: for example, while linear characters evaluate to the values listed in Equation \eqref{eq:abelchar} above, general irreducible characters do not necessarily need to evaluate to one of these values on a PDS (see Lemma \ref{eigenvalues}). Perhaps more importantly, the characters evaluating to one of a few specific values on an inverse-closed subset is not a \textit{sufficient} condition for the subset to be a PDS in the nonabelian case (only a necessary condition); see Remark \ref{rem:111}. 

Up until now, the only restrictions that apply in the nonabelian setting are those of \cite{swartz-tauscheck}, which were inspired by the work of Yoshiara on groups acting regularly on the point set of a generalized quadrangle (GQ) \cite{yoshiara}. The main method in each of \cite{swartz-tauscheck, yoshiara} was to constrain the value $\Phi(g)$, which counts the number of vertices (resp., points) moved to adjacent vertices (resp., points) by an automorphism $g$ in the automorphism group $G$ of the SRG (resp., GQ). Yoshiara was able to utilize Benson's Lemma \cite[Lemma 4.3]{benson}, which gave modular constraints on $\Phi(g)$ for GQs. De Winter, Kamischke, and Wang \cite{dewinter-kamischke-wang} later proved a generalization of Benson's Lemma for arbitrary SRGs, and Swartz and Tauscheck \cite{swartz-tauscheck} used the generalization of De Winter, Kamischke, and Wang to push Yoshiara's result into an analogous modular restriction on SRGs with a regular automorphism group.

Recently, Ott \cite{ott} studied $\Phi$ -- which is indeed a class function on $G$ -- from a character-theoretic perspective, and Ott used this to prove that GQs of order $s$ ($s+1$ points incident with each line, $s+1$ lines incident with each point), $s$ even, cannot have a group acting regularly on its point set. As it turns out, the ideas Ott used are applicable in situations far beyond GQs (and reversible difference sets); in fact, one could argue that the PDS setting is where these techniques are most effective.

The purpose of this paper is to extend Ott's results to PDSs in nonabelian groups. (Indeed, just as techniques such as Equation \eqref{eq:abelchar} are only truly effective in the abelian setting, the results outlined here are most effective in certain nonabelian settings.) These techniques have enabled us to rule out the existence of a PDS inside a range of groups and to provide severe restrictions on certain families of PDSs. We also develop new techniques for discovering and constructing new examples of PDSs inside of nonabelian groups. For a given group $G$ of order $v$, if there exists a parameter set $(v, k, \lambda, \mu)$ which could potentially correspond to a PDS inside $G$, then these methods use character theory to give information about the size of the intersection of this potential PDS with every conjugacy class of $G$.

We have used these methods to show that if a $(v, k, \lambda, \mu)$-PDS exists in a group $G$, where there is a minimal nontrivial normal subgroup $N$ of $G$ such that $G/N$ is abelian and $\gcd(|G/N|, \sqrt{\Delta}) = 1$, then the size of the intersection of the PDS with the cosets of $N$ are known, providing severe restrictions on such PDSs. We have also used our methods to construct several PDSs. For $p$ a prime, we demonstrate the existence of an infinite family of PDSs in nonabelian groups related to Steiner triple systems with parameters $(p^d(p^d-1)/6, 3(p^d-3)/2, (p^d + 3)/2, 9)$, where $p^d$ is greater than $9$ and congruent to $7 \pmod {12}$, as well as Steiner $2$-designs $\S(2, k, v)$ with parameters $(p^d(p^d - 1)/(k(k-1)), k(p^d-k)/(k-1), (p^d -1)/(k-1) + (k-1)^2 -2, k^2)$ for $p^d$ sufficiently large and $p^d \equiv k(k-1) + 1 \pmod{2k(k-1)}$. (These Steiner triple systems were shown to be block-regular by Clapham \cite{clapham}, and the Steiner $2$-designs were shown to be block regular by Wilson \cite{wilson}. It is immediate that these will be PDSs; however, PDS families in Steiner $2$-designs do not seem to have been recognized thus far in the literature beyond a recent mention in \cite{PonomarenkoRyabov_2025}.) 


This paper is organized as follows. In Section \ref{sect:prelims}, we outline preliminary results related to PDSs, SRGs, and character theory over local rings necessary for later sections. Section \ref{sect:ott} is dedicated to generalizing the results of Ott to the PDS setting. In Section \ref{sect:comp}, we introduce computational methods for the case when one of $\sqrt{\Delta}$, $v$ has factors which are relatively prime to the others that are used both to rule out the existence of numerous PDSs and also discover several new examples in nonabelian groups; we also provide an example demonstrating how the techniques from this paper can be used when $\sqrt{\Delta}$ and $v$ have the same primes as factors (such as when $\Delta = v$). Finally, we include appendices in which we list parameters that can be ruled out either theoretically or computationally by methods already known (Appendix \ref{app:A}) or by new methods (Appendix \ref{app:B}) as well as a list of open cases and cases in which a PDS exists for sufficiently small $v$ (Appendix \ref{app:C}).


\section{Preliminaries}
\label{sect:prelims}

\subsection{Partial difference sets}

 For our purposes, $G$ will always be a finite group, and $D$ will always denote a regular PDS in $G$. In this paper, we study the structure of $G$ and $D$ utilizing character theory. To do so, we often identify $D$ with the element $\sum_{d\in D} d$ in the group ring $\C[G]$, and we occasionally abuse notation by writing $D = \sum_{d\in D} d$. This alternative description of a PDS is useful due to the following result (see \cite[Theorem 1.3]{ma}).

\begin{lem}\label{D-squared}
	Let $D$ be a regular PDS in a group $G$. Then, in the group ring $\C[G]$,
		\begin{equation*}
			D^2 = k1 + \lambda D + \mu (G - D - 1).
		\end{equation*}
\end{lem}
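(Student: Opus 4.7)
The plan is to compute the coefficient of each element of $G$ in $D^2$ directly, using the definition of a PDS. The key bridge between the definition (which concerns products $xy^{-1}$) and the statement (which involves $D^2 = D \cdot D$) is the inverse-closed hypothesis: since $D = D^{(-1)}$ as subsets of $G$, the group ring element $\sum_{d \in D} d$ equals $\sum_{d \in D} d^{-1}$. Hence, writing $D$ for the group ring element, we have $D = D^{(-1)}$, and so $D^2 = D \cdot D^{(-1)} = \sum_{x,y \in D} xy^{-1}$.

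With that identification, I would fix an arbitrary $g \in G$ and determine the coefficient of $g$ in $\sum_{x,y \in D} xy^{-1}$ by splitting into three cases. When $g = 1$, the equation $xy^{-1} = 1$ forces $x = y$, giving exactly $|D| = k$ contributing pairs. When $g$ is a nonidentity element of $D$, the PDS definition asserts that $g$ has exactly $\lambda$ representations as $xy^{-1}$ with $x, y \in D$. When $g \in G - (D \cup \{1\})$, the PDS definition gives exactly $\mu$ such representations. Packaging these three cases yields
\begin{equation*}
D^2 \;=\; k \cdot 1 \;+\; \lambda \sum_{d \in D} d \;+\; \mu \sum_{g \in G - D - \{1\}} g \;=\; k \cdot 1 + \lambda D + \mu(G - D - 1),
\end{equation*}
which is precisely the claimed identity.

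There is really no serious obstacle here; the only point requiring a line of justification is the passage $D^2 = D \cdot D^{(-1)}$, which uses the regularity hypothesis. Everything else is a transcription of the counting definition of a PDS into the group ring language, so the proof is short and essentially bookkeeping.
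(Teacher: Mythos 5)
Your proof is correct. The paper does not actually prove this lemma --- it simply cites it as \cite[Theorem 1.3]{ma} --- so there is no internal argument to compare against; your direct coefficient count is the standard one and fills in exactly what the citation delegates. The two places where regularity enters are handled properly: $D = D^{(-1)}$ justifies rewriting $D^2$ as $\sum_{x,y \in D} xy^{-1}$, and since the paper's definition only counts representations $gh^{-1}$ with $g \neq h$, your observation that the diagonal pairs $x = y$ contribute only to the coefficient of $1$ (giving $k$, with $1 \notin D$ keeping this term separate from $\lambda D$) is precisely the bookkeeping needed to match the definition to the group ring identity.
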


\noindent In fact this condition is sufficient for $D$ to be a regular PDS.

\subsection{Strongly regular graphs}
As we have mentioned, the notion of a strongly regular graph is closely related to that of a PDS, so we now take the opportunity to formally introduce SRGs. For further reference, see \cite[Chapter 10]{godsil-royle}. We call a graph $\Gamma$ a $(v, k, \lambda, \mu)$-SRG on $v$ vertices if $\Gamma$ is $k$-regular and if, for any pair of vertices $u,v$ in $\Gamma$, either $u, v$ have $\lambda$ neighbors in common when $uv$ is an edge, or they have $\mu$ neighbors in common when $uv$ is not an edge. Now, if $\Gamma$ is a $(v, k, \lambda, \mu)$-SRG with an adjacency matrix $A$, then $A$ has eigenvalues $k$,
	\begin{center}
	$\theta_1 = \frac12(\lambda - \mu + \sqrt\Delta), \;\;\; \theta_2 = \frac12(\lambda - \mu - \sqrt\Delta)$
	\end{center}
\noindent where $\Delta \colonequals (\lambda - \mu)^2 + 4(k - \mu)$; we call this value the {\it discriminant} of the PDS. The eigenvalue $k$ is known as the Perron eigenvalue, and it has multiplicity $1$. The eigenvalues $\theta_1$ and $\theta_2$ occur with multiplicities $m_1$ and $m_2$ in the adjacency matrix of the associated SRG, where
            \[ m_i = \frac{1}{2} \left( (v-1) + (-1)^i \frac{2k + (v-1)(\lambda - \mu)}{\sqrt{\Delta}}\right)\]
             for $i = 1, 2$ (see \cite[Chapter 10]{godsil-royle}).
As we will see, the values $\theta_1, \theta_2$, and $\Delta$ are of particular importance in the study of PDSs.

We set aside now two important relations for the eigenvalues, which we will use freely throughout the paper. Their proof follows immediately from the definition for $\theta_1, \theta_2$ provided above.

\begin{lem}\label{eigen-relations}
Let $\Gamma$ be a $(v, k, \lambda, \mu)$-SRG with eigenvalues $k, \theta_1, \theta_2$. Then,
	\begin{itemize}
		\item[(i)] $\theta_1\theta_2 = \mu - k$, and
		\item[(ii)] $\lambda - \mu = \theta_1 + \theta_2$.
	\end{itemize}
\end{lem}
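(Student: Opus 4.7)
The plan is to verify both identities by direct substitution of the explicit formulas
\[
\theta_1 = \tfrac{1}{2}(\lambda - \mu + \sqrt{\Delta}), \qquad \theta_2 = \tfrac{1}{2}(\lambda - \mu - \sqrt{\Delta})
\]
given just above the lemma statement, together with the definition $\Delta = (\lambda - \mu)^2 + 4(k-\mu)$.

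For part (ii), I would simply add the two expressions: the $\sqrt{\Delta}$ terms cancel and the factors of $\tfrac{1}{2}$ combine to give $\theta_1 + \theta_2 = \lambda - \mu$. This is a one-line computation.

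For part (i), I would multiply the two expressions as a difference of squares, obtaining
\[
\theta_1 \theta_2 = \tfrac{1}{4}\bigl((\lambda - \mu)^2 - \Delta\bigr).
\]
Substituting the defining expression for $\Delta$ makes the $(\lambda-\mu)^2$ terms cancel, leaving $\theta_1\theta_2 = \tfrac{1}{4}(-4(k-\mu)) = \mu - k$, as required.

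There is no real obstacle here; both identities are purely formal consequences of the stated closed-form expressions for $\theta_1$ and $\theta_2$, which is precisely why the authors remark that the proof is immediate. The only care needed is in the algebraic manipulation of $\Delta$ in part (i), but this amounts to a single line of arithmetic.
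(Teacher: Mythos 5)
Your proposal is correct and matches the paper's approach: the paper itself states the lemma follows immediately from the closed-form expressions for $\theta_1$ and $\theta_2$, which is exactly the direct substitution you carry out. Both computations (sum cancelling $\sqrt{\Delta}$, product via difference of squares with $\Delta = (\lambda-\mu)^2 + 4(k-\mu)$) are valid.
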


Note that for our purposes $\sqrt\Delta$ will always be an integer. Indeed, the case when $\sqrt\Delta$ is not an integer is quite restricted: the SRG $\Gamma$ must be a \textit{conference graph}, which has parameters $(v, (v-1)/2, (v-5)/4, (v-1)/4)$; see \cite[Lemmas 10.3.2, 10.3.3]{godsil-royle}. 

Furthermore, we say that the $(v, k, \lambda, \mu)$-SRG is \textit{imprimitive} if either $\mu = 0$ or $\mu = k$ and is \textit{primitive} otherwise. As noted in \cite[Section 1.1.3]{brouwer-maldeghem}, imprimitive SRGs are either the union of complete graphs of the same size or are complete multipartite graphs. We will only consider parameter sets that correspond to primitive strongly regular graphs, and so we will always assume that $0 < \mu < k$.

Finally, we remark that the existence of a $(v,k,\lambda, \mu)$-SRG is equivalent to the existence of its complement, which is itself a $(v, v - k - 1, v - 2k +\mu - 2, v - 2k + \lambda)$-SRG, and, indeed, the complement of a regular $(v, k, \lambda, \mu)$-PDS in $G - \{1\}$ is itself a $(v, v - k - 1, v - 2k +\mu - 2, v - 2k + \lambda)$-PDS in $G$. Note that one can rule out the existence of a PDS by ruling out the existence of its complement. 

\subsection{Some families of SRGs}
\label{subsec:famSRG}
We have already mentioned the particular class of finite geometries known as generalized quadrangles. We now provide some definitions and notation for generalized quadrangles, as we will need them in Proposition \ref{odd-order-GQ}. A \textit{generalized quadrangle} of order $(s, t)$ is an incidence geometry of points and lines satisfying the following conditions:
	\begin{itemize}
		\item[(i)] Each point is incident with $t+1$ lines, and two points are both incident with at most one line.
		\item[(ii)] Each line is incident to $s+1$ points, and any two lines are both incident to at most one point.
		\item[(iii)] If $P$ is a point and $\ell$ a line not incident with $P$, there is exactly one point on $\ell$ collinear with $P$.
	\end{itemize}
\noindent We denote a $\GQ$ of order $(s, t)$ -- which may not be unique -- by $\GQ(s, t)$. We may construct an incidence graph $\Gamma$ from a GQ $S$ by identifying the points of $S$ with the vertices of $\Gamma$, and specifying that two points are adjacent if they are collinear. It is an easy exercise that $\Gamma$ is a $((s+1)(st+1), s(t+1), s-1, t+1)$-SRG. See \cite[Section 2.2.9]{brouwer-maldeghem} for further information on GQs.

One particularly important family of strongly regular graphs are those related to Steiner $2$-designs, which we use in Proposition \ref{Clapham-PDS-family}, Theorem \ref{WilsonBuratti-PDS-family}, Proposition \ref{prop:buratti}, and Proposition \ref{Fuji-PDS-family}. A \textit{Steiner $2$-design}, $\S(2, k, v)$, is a $v$-element set, along with a collection of $k$-subsets of this set, called {\it blocks}, such that any two elements are contained in exactly one block. The families of Steiner $2$-designs in Proposition \ref{Clapham-PDS-family}, Theorem \ref{WilsonBuratti-PDS-family}, Proposition \ref{prop:buratti}, and Proposition \ref{Fuji-PDS-family} are all algebraic constructions using the elements of an abelian group $G$. In these cases, the Steiner $2$-design is represented by a distinguished set $\mathscr{B}$ of blocks, known as {\it base blocks}. The remaining blocks take the form $g + B$ for $B \in \mathscr{B}$ and $g \in G$.

A Steiner $2$-design has a natural representation as a strongly regular graph if one identifies the blocks of the Steiner $2$-design with the vertices of the graph such that two vertices are adjacent if the intersection of their corresponding blocks has one element. If $v$ is the number of elements in the Steiner $2$-design, then the corresponding SRG has parameters
	\[ \left(\frac{v(v - 1)}{k(k-1)}, \frac{k(v-k)}{k-1}, \frac{v -1}{k-1} + (k-1)^2 -2, k^2 \right).\] 
We will speak often of one particular class of Steiner $2$-designs, known as Steiner triple systems, which occur when $k = 3$. For further information on Steiner triple systems, Steiner $2$-designs, and Steiner systems more generally, see \cite[Section I.4]{handbook-comb-des}.

\subsection{Character theory and representations over local rings}

Throughout this paper, we assume that the reader is familiar with fundamental results from representation theory and character theory. For reference, the reader may consult \cite[Chapter 2]{isaacs}. In addition, we also utilize local rings and local representations. We place here the main facts.

Local rings are constructed for particular primes to describe the ``local behavior" around this prime. They are used in a variety of different settings, though for our purposes we only consider them in the field $\Q(\zeta)$ for $\zeta$ an $n$th root of unity.

\begin{defn}\label{def-R}
	For a specified prime $p$, and prime ideal $\p_0 \subseteq \Z[\zeta]$ such that $p \in \p_0$, define the ring $\frakR$ by
		\begin{equation*}
			\frakR = \biggl\{\frac{x}{y}: x,y\in\Z[\zeta], y\not\in\p_0\biggr\}.
		\end{equation*}
	$\frakR$ is known as the \textit{local ring} at $\p_0$.
\end{defn}

The local ring $\frakR$ has the following important properties, which are proven in \cite[4.1]{feit}. For a more general explanation of the subject for non-experts, see \cite[16]{DummitFoote_2004}.

\begin{lem}\label{max-ideal}
	Let $p$ be prime, and let $\frakR$ be the local ring at $\p_0$ such that $p \in \p_0$, and let $G$ be an order $n$ group, then
		\begin{itemize}
			\item[(i)] any $\Q(\zeta)$-representation of $G$ is similar to a $\frakR$-representation of $G$,
			\item[(ii)] $\frakR$ is a PID, and
			\item[(iii)] $\frakR$ has a unique maximal ideal $\p$ containing $p$.
		\end{itemize}
\end{lem}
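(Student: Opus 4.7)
The plan is to address the three claims in order of increasing depth, drawing on standard facts about localization, Dedekind domains, and $G$-invariant lattices. I would handle (iii) first, since it is essentially the defining property of a localization: by construction $\frakR = \Z[\zeta]_{\p_0}$, so its prime ideals correspond bijectively to the primes of $\Z[\zeta]$ contained in $\p_0$, and the unique maximal one is $\p \colonequals \p_0 \frakR$, which contains $p$ because $p \in \p_0$.

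For (ii), I would invoke the fact that $\Z[\zeta]$ is the ring of integers of the number field $\Q(\zeta)$ and hence a Dedekind domain (Noetherian, integrally closed in its fraction field, of Krull dimension one). The localization of a Dedekind domain at a nonzero prime ideal is a discrete valuation ring, and every DVR is a PID, which yields the claim.

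Part (i) would then follow from the standard averaging-lattice construction. Given a $\Q(\zeta)$-representation $\rho \colon G \to \GL_n(\Q(\zeta))$, I would form
\[ M = \sum_{g \in G} \rho(g) \cdot \frakR^n \subseteq \Q(\zeta)^n. \]
Since $G$ is finite and each $\rho(g)$ has entries in $\Q(\zeta)$, the sum $M$ is a finitely generated, $G$-stable $\frakR$-submodule of $\Q(\zeta)^n$ that contains $\frakR^n$ and therefore spans the ambient $\Q(\zeta)$-vector space. By (ii), $\frakR$ is a PID, and $M$ is torsion-free (as a submodule of a $\Q(\zeta)$-vector space), so $M$ is free of rank exactly $n$. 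Choosing an $\frakR$-basis of $M$ produces a change-of-basis matrix $T \in \GL_n(\Q(\zeta))$ with $T^{-1}\rho(g)T \in \GL_n(\frakR)$ for every $g \in G$, exhibiting the desired similarity.

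The principal obstacle is (ii): the chain Dedekind $\Rightarrow$ DVR $\Rightarrow$ PID is standard, but a self-contained proof that $\Z[\zeta]$ is Dedekind requires input from algebraic number theory (most notably that $\Z[\zeta]$ is integrally closed in $\Q(\zeta)$ — the classical theorem that the cyclotomic integers form the full ring of integers of the cyclotomic field). Once (ii) is secured, both (i) and (iii) follow routinely, the former from the lattice argument and the latter from the universal property of localization.
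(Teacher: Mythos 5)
Your proposal is correct and matches the paper's treatment: the paper simply cites Feit for (ii) and (iii), which are exactly the standard localization/Dedekind facts you supply, and its justification of (i) (given inside the proof of Theorem \ref{linear-invariance}) is the same $G$-invariant lattice argument you use, taking the $\frakR$-span of $\{\repF(g)v_i\}$ and invoking freeness over the PID $\frakR$ to rewrite the representation with entries in $\frakR$. No gaps; your rank-$n$ and $G$-stability checks are the right points to verify.
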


We also need the following theorem of Brauer, see \cite[Theorem 10.3]{isaacs}.

\begin{thm}\label{Q(z)-rep}
	Let $G$ have exponent $n$, and let $\zeta$ be an $n^\text{th}$ root of unity. Then every $\C$-character of $G$ is afforded by a $\Q(\zeta)$-representation of $G$.
\end{thm}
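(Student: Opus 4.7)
The plan is to prove this via Brauer's theorem on induced characters, which does the heavy lifting. I would first reduce to showing that every irreducible $\C$-character $\chi$ of $G$ is afforded by a $\Q(\zeta)$-representation, and then try to exhibit $\chi$ as a $\Z$-linear combination of characters that are manifestly realized over $\Q(\zeta)$.

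The workhorse is Brauer's induction theorem, which asserts that every $\C$-character of $G$ equals a $\Z$-linear combination $\sum_i n_i \lambda_i^G$, where each $\lambda_i$ is a linear (one-dimensional) character of an elementary subgroup $H_i \le G$ (that is, the direct product of a $p$-group and a cyclic group of order coprime to $p$). The values of $\lambda_i$ are roots of unity of order dividing $\exp(H_i)$, and since $\exp(H_i)$ divides $\exp(G) = n$, they lie in $\Q(\zeta)$. The tautological map $h \mapsto \lambda_i(h)$ is therefore a $\Q(\zeta)$-representation of $H_i$, and since induction of representations preserves the base field, each $\lambda_i^G$ is afforded by a $\Q(\zeta)$-representation of $G$. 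Consequently, every $\chi \in \Irr(G)$ is at the very least a virtual character over $\Q(\zeta)$.

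The main obstacle is upgrading this from a virtual character to a genuine character: an integer combination of $\Q(\zeta)$-representation characters need not, a priori, correspond to an actual $\Q(\zeta)$-representation, since negative contributions need to cancel against positive ones in a controlled way. This step requires a Schur-index argument: one must show that the Schur index $m_{\Q(\zeta)}(\chi)$ equals $1$, so that $\chi$ itself, and not merely some positive integer multiple, is realized over $\Q(\zeta)$. The cleanest route exploits the fact that elementary subgroups are monomial, i.e.\ every irreducible character of an elementary subgroup is induced from a linear character. Combined with the Brauer decomposition above and a Galois-theoretic averaging over $\mathrm{Gal}(\Q(\zeta)/\Q)$ using that $\chi$ takes values in $\Q(\zeta)$, this lets one extract an honest $\Q(\zeta)$-representation affording $\chi$, completing the proof.
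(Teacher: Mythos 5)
The paper does not actually prove this statement: it is quoted as a classical theorem of Brauer and cited to \cite[Theorem 10.3]{isaacs}, so the natural comparison is with the standard proof given there. Your outline starts down exactly that road: Brauer's induction theorem writes $\chi = \sum_i n_i \lambda_i^G$ with $n_i \in \Z$ and $\lambda_i$ linear characters of elementary subgroups, each $\lambda_i^G$ is visibly afforded by a $\Q(\zeta)$-representation since $\exp(G) = n$, and you correctly identify that the entire remaining content is the claim $m_{\Q(\zeta)}(\chi) = 1$.

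The step you offer for that claim, however, is a genuine gap. Monomiality of elementary subgroups adds nothing beyond what Brauer induction already supplies (the $\lambda_i$ are already linear), and ``Galois-theoretic averaging over $\mathrm{Gal}(\Q(\zeta)/\Q)$'' is not the relevant mechanism: because $\exp(G) = n$, all character values already lie in $F = \Q(\zeta)$, so $F(\chi) = F$ and there is nothing to average; averaging over that Galois group is what one would attempt for realizability over $\Q$, which is false in general. The missing ingredient is the divisibility property of the Schur index (the lemma proved just before \cite[Theorem 10.3]{isaacs}): if $\psi$ is any character of $G$ afforded by an $F$-representation, then $m_F(\chi)$ divides $[\psi, \chi]$. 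Applying this with $\psi = \lambda_i^G$ and pairing the Brauer decomposition against $\chi$ gives $1 = [\chi, \chi] = \sum_i n_i [\lambda_i^G, \chi]$, an integer combination of multiples of $m_F(\chi)$, forcing $m_F(\chi) = 1$; together with $F(\chi) = F$ this realizes $\chi$ itself, not just a multiple, over $\Q(\zeta)$. Without that divisibility lemma (or an equivalent substitute), your argument stops at ``$\chi$ is a virtual $\Q(\zeta)$-character,'' which is strictly weaker than the theorem.
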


We will use part (ii) of Lemma \ref{max-ideal} to construct representations of $G$ over fields of prime characteristic from the ring $\frakR$. We then relate these representations over prime characteristic to the $\C$-characters of $G$ using Theorem \ref{Q(z)-rep} and part (i) of Lemma \ref{max-ideal}. An explanation of this method is provided in the brief introduction to the proof of Theorem \ref{linear-invariance}. For a deeper treatment, see the overview in \cite[Chapter 15]{isaacs}, or see \cite{goldschmidt} for a dedicated analysis of the subject from the perspective of Brauer characters.


\section{The character approach}
\label{sect:ott}

In this section, we generalize (and, in some cases, expand upon) Ott's results about reversible $((1+s)(1+s^2), s^2 + s + 1, s+1)$-DSs \cite{ott}. Ott was able to guarantee several restrictions on reversible difference sets corresponding to generalized quadrangles of order $s$ for which $\gcd(v, \sqrt\Delta) = 1$. In Subsection \ref{subsec:ott1}, we first prove results that will hold for any $v$ and $\sqrt{\Delta}$.  In Subsection \ref{subsec:ott2}, we expand Ott's results to analyze an arbitrary regular $(v, k, \lambda, \mu)$-partial difference set for which there is at least one prime $p$ dividing $v$ which does not divide $\sqrt\Delta$. (Note that, in the abelian setting, $v$ and $\sqrt{\Delta}$ must have the same prime divisors \cite[Theorem 6.1]{ma}, so the results in Subsection \ref{subsec:ott2} only apply in the nonabelian setting.) Largely, the results in Subsections \ref{subsec:ott1} and \ref{subsec:ott2} carry over \textit{mutatis mutandis} from results in \cite{ott}; we include these proofs for the sake of completeness.  In Subsection \ref{subsec:normal}, we prove results about the intersection of a PDS with cosets of certain normal subgroups; like the results of Subsection \ref{subsec:ott2}, the results in Subsection \ref{subsec:normal} only apply in the nonabelian setting when $v$ has at least one linear character whose order is coprime to $\sqrt{\Delta}$.  Finally, in Subsection \ref{subsec:applications}, we provide some examples of applications of these results.


Throughout this section, $G$ will be a group containing a regular $(v,k,\lambda,\mu)$-PDS, $D$, with an irreducible representation $\repF: G \longrightarrow M_n(\C)$. Any representation $\repF$ given by a mapping into complex matrices can be extended to a $\C[G]$-representation by sending $\sum_{g\in G} c_gg$ to $\sum_{g\in G} c_g\repF(g)$. For this reason, we do not distinguish between the functions $\C[G]\longrightarrow M_n(\C)$ and $G\longrightarrow M_n(\C)$. For an irreducible representation $\repF$, we denote the vector space underlying $\repF$ as $V_\repF$. Additionally, $\Irr(G)$ will be the set of complex irreducible characters of $G$, $\chi$ will always be a general irreducible character, $\xi$ will always be a linear character, and $\mathcal{L}$ will be the group of linear characters.

\subsection{Results that hold for any $v$, $\sqrt{\Delta}$}
\label{subsec:ott1}

Define the function \[\Phi(h) = |C_G(h)||h^G\cap D|,\] where $h^G = \{g^{-1}hg:g\in G\}$ is the conjugacy class of $h$ in $G$. This is a class function, since $h^G = (g^{-1}hg)^G$ and $|C_G(g^{-1}hg)| = |C_G(h)|$. Thus, $\Phi$ may be represented as a sum $\Phi(h) = \sum_{\chi\in\Irr(G)}c_\chi\chi(h)$ of $\chi\in\Irr(G)$ with coefficients $c_\chi$ (see \cite[Section 1]{ott}).

\begin{lem}\label{phi-function}
	For all $g \in G$,  $\Phi(g) = \sum_{\chi\in\Irr(G)} \overline{\chi(D)} \chi(g)$.
\end{lem}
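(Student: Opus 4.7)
The plan is to use the standard orthogonality machinery for class functions: since $\Phi$ is a class function, it admits a unique expansion $\Phi = \sum_{\chi \in \Irr(G)} c_\chi \chi$, and the coefficients are given by the Hermitian inner product $c_\chi = \langle \Phi, \chi \rangle = \frac{1}{|G|}\sum_{g \in G} \Phi(g) \overline{\chi(g)}$. So the whole task reduces to computing this inner product and recognizing that it simplifies to $\overline{\chi(D)}$.

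The key move is to collect the sum $\sum_{g \in G}$ according to conjugacy classes. For a class $K$ with representative $x_K$, the function $\Phi$ takes the constant value $\Phi(x_K) = |C_G(x_K)|\,|K \cap D| = \frac{|G|}{|K|}|K \cap D|$, and $\overline{\chi(g)}$ is likewise constant on $K$. Therefore
\begin{equation*}
c_\chi \;=\; \frac{1}{|G|}\sum_{K} |K| \cdot \frac{|G|}{|K|}|K \cap D| \cdot \overline{\chi(x_K)} \;=\; \sum_{K} |K \cap D|\,\overline{\chi(x_K)}.
\end{equation*}
Since $\overline{\chi}$ is constant on conjugacy classes, the last sum is exactly $\sum_{d \in D} \overline{\chi(d)} = \overline{\chi(D)}$, where we use the convention from the Preliminaries that $\chi$ is extended linearly to $\C[G]$ and $D$ is identified with $\sum_{d \in D} d$. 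Substituting back into the expansion yields $\Phi(g) = \sum_{\chi \in \Irr(G)} \overline{\chi(D)}\chi(g)$, as claimed.

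There is no real obstacle here; the only thing to be careful about is the placement of the complex conjugation (it lands on $\chi(D)$ rather than on $\chi(g)$ because of how the Hermitian inner product is defined) and the bookkeeping converting between sums over $G$ and sums over conjugacy classes. This is essentially the observation Ott uses in the generalized quadrangle setting, and it transfers without change to a general regular PDS since the argument uses only that $D$ is a subset of $G$ and that $\Phi$ is defined as $|C_G(h)|\,|h^G \cap D|$.
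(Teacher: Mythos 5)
Your proof is correct and follows essentially the same route as the paper: both expand the class function $\Phi$ in the orthonormal basis $\Irr(G)$, compute $c_\chi = [\Phi,\chi]$ by collecting the sum over $G$ into conjugacy classes, and identify the result with $\overline{\chi(D)} = \sum_{d\in D}\overline{\chi(d)}$. The bookkeeping with $|C_G(x_K)|\,|K| = |G|$ matches the paper's computation exactly.
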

	\begin{proof}
	Denote the distinct conjugacy classes of $G$ by $h_1^G, \dots, h_r^G$. By the orthogonality relations, 
		\begin{align*}
			c_\chi &= [\Phi,\chi] = \frac1{|G|}\sum_{h\in G}\Phi(h)\overline{\chi(h)} = \frac1{|G|}\sum_{h\in G}|C_G(h)||h^G\cap D|\overline{\chi(h)} = \sum_{h\in G}\frac{|C_G(h)|}{|G|}|h^G\cap D|\overline{\chi(h)}\\
			       &= \sum_{i=1}^r\frac{|C_G(h)||H_i|}{|G|}|h_i^G\cap D|\overline{\chi(h_i)} = \sum_{i=1}^r |h_i^G\cap D|\overline{\chi(h_i)} = \sum_{x \in D}\overline{\chi(x)} = \overline{\chi(D)}.
		\end{align*}
	So, $\Phi(g) = \sum_{\chi\in\Irr(G)} c_\chi \chi(g) = \sum_{\chi\in\Irr(G)} \overline{\chi(D)} \chi(g)$ as claimed.
	\end{proof}

Now, consider a nonprincipal irreducible character $\chi$ of $G$ afforded by the representation $\repF$; compare to \cite[Lemma 2.2]{ott}.

\begin{lem}\label{eigenvalues}
	For $D$ a regular $(v, k ,\lambda, \mu)$-PDS inside $G$ and a representation $\repF$ affording a nonprincipal irreducible character $\chi$ with underlying vector space $V_\repF$, $\repF(D)$ has two eigenvalues $\theta_1 = \frac12(\lambda-\mu + \sqrt{\Delta})$ and $\theta_2 = \frac12(\lambda-\mu - \sqrt{\Delta})$ with corresponding eigenspaces $V_\repF(\theta_1)$ and $V_\repF(\theta_2)$ such that $V_\repF = V_\repF(\theta_1) \oplus V_\repF(\theta_2)$, and
		\begin{center}
			$\chi(D) = \dim V_\repF(\theta_1)\theta_1 + \dim V_\repF(\theta_2)\theta_2$.
		\end{center}
	In particular, if $\chi$ is a nonprincipal linear character, then $\chi(D) = \theta_1$ or $\chi(D) = \theta_2$.
\end{lem}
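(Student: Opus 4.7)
The plan is to begin with the group ring identity $D^2 = k\cdot 1 + \lambda D + \mu(G - D - 1)$ from Lemma \ref{D-squared} and push it through the representation $\repF$, viewed as a $\C$-algebra homomorphism $\C[G]\to M_n(\C)$. The first step is to evaluate $\repF(G)$, where $G$ is identified with $\sum_{g\in G}g$. Since $\repF(G)\repF(h)=\repF(Gh)=\repF(G)$ for every $h\in G$, the matrix $\repF(G)$ commutes with every element of $\repF(G)$ (the image of the representation); Schur's lemma then forces $\repF(G)=cI$ for some scalar $c$. Taking traces, $nc=\chi(G)=\sum_{g\in G}\chi(g)=|G|\,[\chi,1_G]=0$ because $\chi$ is nonprincipal, so $\repF(G)=0$.

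Next, applying $\repF$ to the identity in Lemma \ref{D-squared} and using $\repF(G)=0$ gives
\begin{equation*}
\repF(D)^2 \;=\; kI + \lambda\repF(D) + \mu\bigl(-\repF(D)-I\bigr) \;=\; (k-\mu)I + (\lambda-\mu)\repF(D).
\end{equation*}
Thus $\repF(D)$ annihilates the polynomial $X^{2}-(\lambda-\mu)X-(k-\mu)$, whose roots are exactly
\[
\theta_{1}=\tfrac{1}{2}\bigl(\lambda-\mu+\sqrt{\Delta}\bigr),\qquad \theta_{2}=\tfrac{1}{2}\bigl(\lambda-\mu-\sqrt{\Delta}\bigr).
\]
Because we are in the primitive regime $0<\mu<k$, we have $\Delta=(\lambda-\mu)^{2}+4(k-\mu)>0$, so $\theta_1\neq\theta_2$. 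A matrix satisfying a polynomial with distinct roots is diagonalizable, and all eigenvalues of $\repF(D)$ lie in $\{\theta_1,\theta_2\}$. Consequently $V_{\repF}$ decomposes as the direct sum $V_{\repF}(\theta_1)\oplus V_{\repF}(\theta_2)$ of the two eigenspaces (one of which may be $\{0\}$).

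Taking the trace of $\repF(D)$ in this diagonalized form immediately yields
\[
\chi(D) \;=\; \operatorname{tr}\bigl(\repF(D)\bigr) \;=\; \dim V_{\repF}(\theta_1)\,\theta_1 + \dim V_{\repF}(\theta_2)\,\theta_2,
\]
which is the displayed formula. For the final claim, if $\chi$ is a nonprincipal linear character then $\dim V_{\repF}=1$, so exactly one of the two eigenspaces is all of $V_{\repF}$ and the other is trivial, forcing $\chi(D)\in\{\theta_1,\theta_2\}$.

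The only non-routine point is the vanishing of $\repF(G)$, which is the mechanism that collapses the defining quadratic of a PDS onto $\repF(D)$ alone; once this is in place, the rest is linear algebra. I do not foresee any serious obstacle, provided one records explicitly that primitivity ($0<\mu<k$) guarantees $\Delta>0$ and hence the two roots are distinct so that diagonalizability of $\repF(D)$ (rather than merely triangularizability) is available.
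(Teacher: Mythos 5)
Your proposal is correct and follows essentially the same route as the paper's proof: apply $\repF$ to the identity $D^2 = k1 + \lambda D + \mu(G-D-1)$, use $\repF(G)=0$ for a nonprincipal irreducible, read off the quadratic satisfied by $\repF(D)$, and use the distinctness of $\theta_1,\theta_2$ (the paper argues this via $\mu<k$, you via primitivity, which is the same standing assumption) to get diagonalizability and the trace formula. Your explicit Schur's-lemma justification of $\repF(G)=0$ is a detail the paper merely asserts, but the arguments are otherwise identical.
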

		\begin{proof}
		Let $\repF$ be an irreducible representation of $G$ affording $\chi$ with underlying vector space $V_\repF$. By Lemma \ref{D-squared}, $D^2 = (k-\mu)1 + (\lambda-\mu)D + \mu G$. Then,
			\begin{align*}
				\repF(D^2) &= \repF\bigl((k-\mu)1 + (\lambda-\mu)D + \mu G\bigr)\\ 
				\repF(D^2) &= (k-\mu)\repF(1) + (\lambda-\mu)\repF(D) +\mu(\repF(G))\\
				\repF(D^2) &= (k-\mu)I + (\lambda-\mu)\repF(D).
				\intertext{Note that $\repF(G) = 0$, for $\repF$ is an irreducible representation. Therefore,}
				0 &= \repF(D^2) -(k-\mu)I - (\lambda-\mu)\repF(D).
				\intertext{Let $\alpha$ be an eigenvalue of $\repF(D)$ with corresponding eigenvector $v$. Then,}
				0 &=\Bigl(\repF(D)^2 - (\lambda-\mu)\repF(D)) -(k-\mu)I\Bigr)v\\
				0&=\alpha^2 + (\mu - \lambda)\alpha+(\mu -k).
			\end{align*}
		So $\alpha = \frac12(\lambda - \mu \pm\sqrt{\Delta})$, with $\Delta = 4(k-\mu) + (\lambda-\mu)^2$. Thus, $\alpha$ equals one of $\theta_1 = \frac12(\lambda - \mu + \sqrt{\Delta})$ or $\theta_2 = \frac12(\lambda - \mu - \sqrt{\Delta})$ as claimed.
		
		We have just shown that $\repF(D)$ is a solution to the polynomial $z^2 - (\lambda - \mu)z + \mu - k$, where $z^2 - (\lambda - \mu)z + \mu - k = (z - \theta_1)(z-\theta_2)$ by Lemma \ref{eigen-relations}. Moreover, $\theta_1 \neq \theta_2$. For if $\theta_1 = \theta_2$, then we must have $\mu \ge k$, but this is impossible, for $\mu \le k$ by definition, and if $\mu = k$, then $\lambda = \mu$ cannot hold. So, $\theta_1 \neq \theta_2$. Therefore, the minimal polynomial of $\repF(D)$ is degree $1$ or $2$ with distinct roots, so $\repF(D)$ is diagonalizable with $\theta_1, \theta_2$ on the diagonal. If $V_\repF(\theta_i)$ is the eigenspace corresponding to the eigenvectors of $\theta_i$, then $V_\repF = V_\repF(\theta_1) \oplus V_\repF(\theta_2)$ and $\chi(D) = \dim V_\repF(\theta_1)\theta_1 + \dim V_\repF(\theta_2)\theta_2$. Note that if $\chi$ is a linear character, then it has one eigenspace of dimension $1$, in which case $\chi(D) = \theta_1$ or $\chi(D) = \theta_2$.
		\end{proof}

The following is a direct analogue of \cite[Lemma 2.3]{ott}.

	\begin{lem}\label{kernel-D}
		Assume $G$ contains a regular $(v, k, \lambda, \mu)$-PDS with nonprincipal eigenvalues $\theta_1$, $\theta_2$, $\Cay(G, D)$ is not a conference graph, and $\mu > 0$. Let $\mathcal{L}$ be a nontrivial group of linear characters of $G$, and let $\xi\in \mathcal{L}$ be a nonprincipal linear character of order $p$ with kernel $N$. Then, $p | (k-\xi(D))$, $|N\cap D| = \frac{k-\xi(D)}{p} + \xi(D)$, and $|Na \cap D| = \frac{k-\xi(D)}{p}$ for every $a \not \in N$.
	\end{lem}
		\begin{proof}
		Now, $D\not\subseteq\ker\xi$, for $\xi$ is a nonprincipal irreducible linear character. Therefore, $\xi(D) = \theta_i \neq k$ by Lemma \ref{eigenvalues}, so for some $g\in D$ we know that $\xi(g) = \omega$, for $\omega$ a $p^\text{th}$ root of unity. Recall from Lemma \ref{phi-function} that $\overline{\chi(D)} = \sum_{1 = i}^r|h_i^G\cap D|\overline{\chi( h_i)}$ for conjugacy class representatives $h_1, \dots, h_r$. Then,
			\begin{align*}
				0 &= \xi(D) - \xi(D)\\
				  &= \sum_{i=1}^r |h_i^G\cap D|\overline{\xi(h_i)} - \xi(D).\\
				\intertext{We re-express this as a sum over the cosets $Ng_0, \dots, Ng_{p-1}$ for coset representatives $g_0, \dots, g_{p-1} \in G$. In addition, we may order the $g_0, \dots, g_{p-1}$ so that $\overline{\xi(Ng_i)} = \omega^i$.}
				0 &= \sum_{i=0}^{p-1}|Ng_i\cap D|\overline{\xi(Ng_i)} - \xi(D)\\
				  &=\sum_{i=1}^{p-1}|Ng_i\cap D|\omega^i + (|N\cap D| - \xi(D))\omega^0
			\end{align*}
		So, $\omega$ is a root of the polynomial $p(x) = \sum_{i=1}^{p-1}|Ng_i\cap D|x^i + (|N\cap D| - \xi(D))$. Since $\xi(D) = \theta_\alpha \in \Q$ by assumption, then the cyclotomic polynomial $x^{p-1} + \dots + x + 1$ divides $p(x)$, forcing
			\begin{equation*}
				|N\cap D| - \xi(D) = |Ng_1\cap D|  = \dots = |Ng_{p-1}\cap D| = c\in \Z.
			\end{equation*}
			
		Furthermore, 
			
			\begin{equation*}
				k = 1_G(D) = \sum_{j=0}^{p-1}|Ng_j\cap D| = c+\xi(D) + (p-1)c = \xi(D) + pc
			\end{equation*}
			
		so $\frac{k-\xi(D)}{p} = c\in \Z$ and $|N\cap D| = \frac{k-\xi(D)}{p} + \xi(D)$. In particular, $p$ divides $k-\xi(D)$.
		\end{proof}

We take this opportunity to prove some consequences of these results.

\begin{lem}\label{lem:sumPhi}
Assume $G$ contains a regular $(v,k,\lambda, \mu)$-PDS $D$.
\begin{itemize}
    \item[(i)] If $\chi_1$ and $\chi_2$ are two irreducible characters of $G$, $\z$ is a primitive $v^\text{th}$ root of unity, and $\sigma$ is an automorphism of $\Q(\z)$ such that $\chi_2 = \chi_1^\sigma$, then $\chi_2(D) = \chi_1(D)$.  In particular, for all $\chi \in \Irr(G)$, $\chi(D) \in \Z$ and hence $\overline{\chi}(D) = \chi(D)$.
    \item[(ii)] If $h_1^G, \dots, h_r^G$ denote the distinct conjugacy classes of $G$ and $\chi \in \Irr(G)$, then
            \[ \sum_{i = 1}^r |h_i^G \cap D|\chi(h_i) = \chi(D).\]
    \item[(iii)] If $K$ is a normal subgroup of $G$, then
            \[ \sum_{g \in K} \Phi(g) = |G|\cdot|K \cap D|.\]        
    \item[(iv)] If $m_i$ is the multiplicity of the eigenvalue $\theta_i$, then 
    \[ m_i = \sum_{\repF \text{ irreducible}} (\dim V_\repF)(\dim V_\repF(\theta_i)).\]
    
\end{itemize}            
\end{lem}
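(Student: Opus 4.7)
The plan is to handle the four parts in sequence, with (i) serving as the essential input for the rest of the section; (ii)--(iv) are then bookkeeping consequences.

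For (i), the main step is to establish integrality of $\chi(D)$ for every $\chi \in \Irr(G)$. The principal character gives $\chi(D) = k \in \Z$ immediately, and for a nonprincipal character afforded by $\repF$, Lemma \ref{eigenvalues} expresses $\chi(D) = \dim V_\repF(\theta_1)\cdot \theta_1 + \dim V_\repF(\theta_2)\cdot\theta_2$. Since $\sqrt{\Delta}$ is assumed integral throughout this paper, $\theta_1, \theta_2 \in \Z$, and hence so is $\chi(D)$. With this in hand, Galois invariance is automatic: for any automorphism $\sigma$ of $\Q(\z)$ I would just compute $\chi_1^\sigma(D) = \sum_{d \in D}\sigma(\chi_1(d)) = \sigma(\chi_1(D)) = \chi_1(D)$. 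Specializing $\sigma$ to complex conjugation delivers $\overline{\chi}(D) = \chi(D)$.

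For (ii), the identity is essentially a restatement of the class-function property for $\chi$. I would partition $D = \bigsqcup_{i=1}^r (h_i^G \cap D)$ and compute
\[ \chi(D) = \sum_{x \in D}\chi(x) = \sum_{i=1}^r \sum_{x \in h_i^G \cap D}\chi(x) = \sum_{i=1}^r |h_i^G \cap D|\,\chi(h_i),\]
using the constancy of $\chi$ on each conjugacy class.

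For (iii), normality of $K$ means $K$ is a disjoint union of conjugacy classes $h_i^G$ of $G$, and $\Phi$ is constant on each such class with value $|C_G(h_i)|\cdot|h_i^G \cap D|$. Using the orbit--stabilizer identity $|h_i^G| = |G|/|C_G(h_i)|$, the contribution of $h_i^G$ to $\sum_{g \in K}\Phi(g)$ is $|G|\cdot|h_i^G \cap D|$, and summing over all conjugacy classes contained in $K$ yields the claimed $|G|\cdot|K\cap D|$.

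For (iv), I would invoke the Wedderburn decomposition $\C[G] \cong \bigoplus_{\repF}M_{\dim V_\repF}(\C)$ and note that right multiplication by $D$ on $\C[G]$ — which is the action of the adjacency matrix of $\Cay(G,D)$ under the usual identification — acts on each block $M_{\dim V_\repF}$ as right multiplication by $\repF(D)$. Such an operator on $M_{\dim V_\repF}(\C)$ has the same eigenvalues as $\repF(D)$, each appearing with multiplicity $\dim V_\repF$. By Lemma \ref{eigenvalues}, $\repF(D)$ contributes $\theta_i$ with multiplicity $\dim V_\repF(\theta_i)$, and summing over all irreducibles gives $m_i = \sum_\repF(\dim V_\repF)(\dim V_\repF(\theta_i))$. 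The only real content anywhere is the integrality in (i); after that, the hardest question is just making sure the Wedderburn identification in (iv) is stated in the convention consistent with how the adjacency matrix of $\Cay(G,D)$ is defined.
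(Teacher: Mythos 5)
Your proposal is correct and follows essentially the same route as the paper: integrality of $\chi(D)$ from Lemma \ref{eigenvalues} plus $\sigma|_{\Q}=\mathrm{id}$ for (i), summing the constant value $|C_G(h_i)|\,|h_i^G\cap D|$ over the $G$-classes contained in $K$ for (iii), and the regular-representation decomposition (your Wedderburn phrasing is the same fact as the paper's $\chi_\rho=\sum_{\chi}\chi(1)\chi$) combined with the diagonalizability from Lemma \ref{eigenvalues} for (iv). Your treatment of (ii) by directly partitioning $D$ into class intersections is a slightly more elementary shortcut than the paper's appeal to Lemma \ref{phi-function}, but it is the same computation in substance.
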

        
\begin{proof}
First, (i) follows directly from Lemma \ref{eigenvalues} (so $\chi_1(D) \in \Z$) and the fact that $\sigma|_\Q$ is the identity:
 \[ \chi_2(D) = \chi_1^\sigma(D) = \chi_1(D)^\sigma = \chi_1(D).\]

Next, for (ii), Lemma \ref{phi-function} and its proof imply that
\[ \sum_{i = 1}^r |h_i^G \cap D|\chi(h_i) = \frac{1}{|G|}\sum_{h \in G} \chi(h) \overline{|h^G \cap D|} = [\chi, \Phi] = \chi(D).\]

For (iii), if $g_1^G, \dots, g_s^G$ are the distinct conjugacy classes of $K$, then
    \begin{align*}
         \sum_{g \in K} \Phi(g) &= \sum_{g \in G} |g^G \cap D||C_G(g)|\\
                                &= \sum_{i = 1}^s |g_i^G||g_i^G \cap D||C_G(g_i)|\\
                                &= |G| \cdot \sum_{i = 1}^s |g_i^G \cap D|\\
                                &= |G| \cdot |K \cap D|.
    \end{align*}

Finally, since the action of $G$ on the associated SRG can be identified with the right regular representation $\rho$ of $G$, and it is well known that its corresponding character $\chi_\rho$ satisfies
            \[ \chi_\rho = \sum_{\chi \in \Irr(G)} \chi(1) \chi.\]
Lemma \ref{eigenvalues} implies that
    \[ m_i = \sum_{\repF \text{ irreducible}} (\dim V_\repF)(\dim V_\repF(\theta_i))\]
for $i = 1,2$, proving (iv).    
\end{proof}
        
		\begin{rem}\label{diagonalization}
		 \begin{itemize}
            \item[(1)] We again remark that the results in this subsection do not require that $G$ be an abelian or nonabelian group or any conditions on the factors of $|G|$ or $\sqrt\Delta$. Thus, these results may be applied even in cases when our techniques in the rest of the paper do not otherwise work; an example of such an application is given in Subsection \ref{Non-modular-techniques}
		  \item[(2)] Unlike in the abelian case, while the conditions listed in Lemma \ref{eigenvalues} are necessary, they are not sufficient for the existence of a PDS in a nonabelian group; see Remark \ref{rem:111}.
		  \item[(3)] The condition that $D = D^{(-1)}$ is necessary in the proofs of \cite[Lemma 2.2]{ott} and Lemma \ref{eigenvalues}. Unfortunately, most known difference sets are not reversible (and DSs are only conjectured to be reversible in very limited circumstances), so new ideas will be necessary to extend these results to general DSs.
		  \item[(4)] The main technique in \cite{ott} and our own Theorem \ref{linear-invariance} is to extract information about $D$ by analyzing $\repF(D)$ over fields of characteristic $p$, when $p$ does not divide $\sqrt{\Delta} = \theta_1 - \theta_2$ but does divide $|G|$. A critical element in this proof is that $\repF(D)$ induces a representation on a certain field $L$ of characteristic $p$, and that the induced representation $\overline{\repF(D)}$ on $L$ is diagonalizable, for its minimal polynomial is $(z - \theta_1)(z-\theta_2) \pmod p$, where $\theta_1 \not \equiv \theta_2 \pmod p$. As a result, while the results of this subsection still hold, the techniques in the following subsections largely fail if we ever have $|G|$ sharing all of its prime divisors with $\theta_1 - \theta_2$, for then the minimal polynomial of $\repF(D)$ for any representation $\repF$ is $(z - \theta_1)(z-\theta_2) \equiv (z-\theta_1)^2 \pmod p$, so $\repF(D)$ is not diagonalizable. Unfortunately, this case happens reasonably often, so for such parameter sets techniques involving local rings are ineffective. In Subsection \ref{Non-modular-techniques}, we provide an example computation for how character theory may still be applied to PDSs where $\sqrt\Delta$ does divide $v$.
		 \end{itemize}

		\end{rem}

\subsection{Results that hold when a prime $p$ divides $v$ but not $\sqrt{\Delta}$}
\label{subsec:ott2}

We now prove an analogue of \cite[Corollary 2.1]{ott}. Note again that the assumption that $\mu > 0$, while necessary, amounts to assuming that the associated SRG is not the union of complete subgraphs of the same size. Going forward, we will tacitly assume $\mu > 0$.

	\begin{lem} \label{coprime-condition}
		Let $G$ be a finite group containing a regular $(v, k, \lambda, \mu)$-PDS $D$ that has nonprincipal eigenvalues $\theta_1 > \theta_2$, and let $\sqrt{\Delta} = \theta_1 - \theta_2$. Suppose $p$ is a prime dividing $v$ but not $\sqrt{\Delta}$. Assuming $\mu > 0$, there exists a factorization $\mu = \mu_1 \mu_2$ such that $v_1 = (k - \theta_1)/\mu_1$, $v_2 = (k - \theta_2)/\mu_2$, $v = v_1 v_2$, and $p$ divides only one of $v_1$, $v_2$.
	\end{lem}

	\begin{proof}
	Recall for a primitive SRG that $v\mu = (k-\theta_1)(k-\theta_2)$ (see \cite[Section 1.1.1]{brouwer-maldeghem}). For some factorization of $\mu$ into $\mu_1\mu_2$, we have $v = \frac{k-\theta_1}{\mu_1}\frac{k-\theta_2}{\mu_2}$ where $v_1 =\frac{k-\theta_1}{\mu_1}$ and $v_2=\frac{k-\theta_2}{\mu_2}$ are integers. If $p$ divides $v_1$ and $v_2$,
		\begin{center}
			$0 \equiv 0 - 0 \equiv (k-\theta_2) - (k-\theta_1) \equiv \theta_1-\theta_2 \equiv \sqrt{\Delta}\pmod p$
		\end{center}
	contradicting our assumption that $p$ and $\Delta$ were relatively prime. 
	\end{proof}

Compare the following result to \cite[Lemma 2.4]{ott}.

\begin{lem}\label{constant-lin-char}
Let $G$ be a group with a regular $(v, k, \lambda, \mu)$-PDS $D$ with nonprincipal eigenvalues $\theta_1 > \theta_2$, and let $\mathcal{L}$ be the group of linear characters of $G$. Let $P\subseteq \mathcal{L}$ be a Sylow $p$-subgroup of $\mathcal{L}$, and suppose further that $p$ and $\sqrt{\Delta} = \theta_1 - \theta_2$ are coprime. Then, for any nontrivial $\xi' \in P$,
	\begin{center}
		$\displaystyle\sum_{\xi\in P}\xi(D) = k + (|P|-1)\xi'(D)$.
	\end{center}
In particular, all nontrivial linear characters from $P$ are equivalent over $D$.
\end{lem}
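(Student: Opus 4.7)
The plan is to reduce the identity to showing that every nontrivial character in $P$ takes the same value on $D$. Once this common value $c$ is identified, the formula follows immediately: the trivial character of $P$ contributes $k$, while each of the $|P|-1$ nontrivial characters contributes $c$, yielding $\sum_{\xi \in P} \xi(D) = k + (|P|-1)c = k + (|P|-1)\xi'(D)$ for any nontrivial $\xi' \in P$. This also gives the ``in particular'' clause at once.

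To establish the common value, I would combine two facts about an arbitrary nontrivial $\xi \in P$. First, by Lemma \ref{eigenvalues}, $\xi(D) \in \{\theta_1,\theta_2\}$. Second, I aim to show $p \mid k - \xi(D)$. Given these two constraints together with the hypothesis $\gcd(p,\sqrt{\Delta}) = 1$, which forces $\theta_1 \not\equiv \theta_2 \pmod p$, only one of the two eigenvalues can be congruent to $k$ modulo $p$, so all nontrivial $\xi \in P$ must share that single value on $D$.

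The technical ingredient is extending Lemma \ref{kernel-D} (which handles only $\xi$ of order exactly $p$) to every nontrivial $\xi \in P$, i.e., to linear characters of arbitrary $p$-power order $p^a$. For such a $\xi$ with kernel $N$, I would pick a generator $aN$ of $G/N$, set $\omega = \xi(aN)$ (a primitive $p^a$-th root of unity), and let $c_i = |a^iN \cap D|$, so that $k = \sum_{i=0}^{p^a - 1} c_i$ and $\xi(D) = \sum_{i=0}^{p^a - 1} c_i \omega^i$. Then
\[ k - \xi(D) = \sum_{i=1}^{p^a-1} c_i(1-\omega^i) \]
is visibly divisible by $(1-\omega)$ in $\Z[\omega]$, since $(1-\omega)$ divides $(1-\omega^i)$ for each $i\ge 1$. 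Because $(1-\omega)$ generates the unique prime ideal of $\Z[\omega]$ lying above $p$, its contraction to $\Z$ equals $p\Z$, and hence $p \mid k - \xi(D)$ in $\Z$ as required.

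I expect the main obstacle to be precisely this extension of Lemma \ref{kernel-D} to $p$-power-order characters, since its stated proof relies on the $p$-th cyclotomic polynomial being the minimal polynomial of a primitive $p$-th root of unity, which does not carry over verbatim to primitive $p^a$-th roots. The cyclotomic divisibility argument sketched above bypasses this gap cleanly; an alternative route would reduce to the order-$p$ case via Galois conjugation plus an inductive coset-counting argument on subgroups of $P$, but the direct cyclotomic approach appears to be the most economical.
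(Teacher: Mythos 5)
Your proof is correct, but it takes a genuinely different route from the paper's. The paper never extends Lemma \ref{kernel-D} beyond characters of order exactly $p$; instead it double-counts $S=\sum_{\xi\in P}\xi(D)$: interchanging the order of summation gives $S=|P|\,|D\cap N_p|$ (with $N_p$ the common kernel of $P$), while writing each nontrivial $\xi(D)$ as $\theta_\alpha$ or $\theta_\beta$ gives $S=k-\theta_\alpha+r\theta_\beta+(|P|-r)\theta_\alpha$; the divisibility $|P|\mid k-\theta_\alpha$ (extracted from Lemmas \ref{coprime-condition} and \ref{kernel-D}) then yields $r\sqrt{\Delta}\equiv 0\pmod{|P|}$, and coprimality forces $r=0$. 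You instead upgrade the key divisibility of Lemma \ref{kernel-D} itself to characters of arbitrary $p$-power order, observing that $k-\xi(D)=\sum_{i\ge 1}c_i(1-\omega^i)$ lies in the ideal $(1-\omega)$ of $\Z[\omega]$, whose contraction to $\Z$ is $p\Z$ (note $\xi(D)\in\{\theta_1,\theta_2\}\subset\Z$ by Lemma \ref{eigenvalues} and the standing assumption that $\sqrt{\Delta}$ is an integer), and then a mod-$p$ pigeonhole on the two eigenvalues, using $\theta_1\not\equiv\theta_2\pmod p$, finishes the argument. Each approach buys something: yours is more self-contained and only needs congruences mod $p$ rather than mod $|P|$, it avoids the appeal to Lemma \ref{coprime-condition}, and your cyclotomic extension of Lemma \ref{kernel-D} to order $p^a$ is of independent value --- indeed the paper's opening claim that ``$|P|$ divides $k-\xi(D)$'' is only literally covered by Lemma \ref{kernel-D} when $\xi$ has order $p$, so your lemma smooths over exactly that step; the paper's double-counting, on the other hand, stays entirely within rational-integer arithmetic and produces as a by-product the value of $|D\cap N_p|$, in the spirit of the coset-intersection formulas of Theorem \ref{thm:G'capD}.
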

	\begin{proof}
	First, $|P|$ divides $k-\xi(D)$ by Lemmas \ref{coprime-condition} and \ref{kernel-D}. Set $S \colonequals \sum_{\xi\in P}\xi(D)$. Then,
		\begin{align*}
		S &=\sum_{\xi\in P}\xi(D)\\
		  &=1_G(D) + \sum_{\xi\in P - \{1_G\}}\xi(D)\\
		\intertext{Fix $\theta_\alpha$ to be an output of at least one $\xi \in P$ on $D$, and let $r$ of the remaining $\xi$ evaluate to $\theta_\beta$ on $D$. Then there are $|P| - r - 1$ eigenvalues $\theta_\alpha$ in the sum. So,}
		S &=1_G(D) + r\theta_\beta + (|P| - r - 1)\theta_\alpha\\
		  &= 1_G(D)-\theta_\alpha + r\theta_\beta + (|P| - r)\theta_\alpha\\
		  &= k-\theta_\alpha + r\theta_\beta + (|P| - r)\theta_\alpha
		\intertext{On the other hand, if we define $N_p$ to be the kernel of $P$,}
		S &= \sum_{g\in D}\sum_{\xi\in P}\xi(g)\\
		  &= \sum_{g\in D\cap N_p}\sum_{\xi\in P}\xi(g)\ +  \sum_{g\in D- N_p}\sum_{\xi\in P}\xi(g)\\
		  &=\sum_{\xi\in P}\xi(D\cap N_p)\ +  \sum_{g\in D- N_p}0\\
		  &=\sum_{\xi\in P}|D\cap N_p| + 0\\
		  &= |P||D\cap N_p|
		\end{align*}
	Recall that $|P|$ divides $k-\xi(D)$. Thus, evaluating $k-\theta_\alpha + r\theta_\beta + (|P| - r)\theta_\alpha$ modulo $|P|$ gives $0 \equiv r\theta_\beta - r\theta_\alpha \equiv \pm r\sqrt{\Delta}$. Further, $\sqrt{\Delta}$ and $|P|$ are coprime, so $r = |P|$ or $r = 0$. If $r = |P|$, then every $\xi$ evaluates to $\theta_\beta$, which contradicts our selection of $\theta_\alpha$ to be the output of some $\xi$, so $r = 0$. Therefore, all the linear characters of $P$ are equivalent over $D$.
	\end{proof}

For the next theorem (which is an analogue of \cite[Theorem 2.5]{ott}), we once again set $\zeta$ to be a $|G|^\text{th}$ root of unity. Recall the definition of the local ring $\frakR$ given in Definition \ref{def-R}. If $\frakR$ is the local ring for the prime ideal $\p$, then we define the field $L \colonequals \frakR / \p$. Note that $L$ is a finite field of characteristic $p$.

\begin{thm}\label{linear-invariance}
Let $G$ be a group with a regular $(v, k, \lambda, \mu)$-PDS $D$ with nonprincipal eigenvalues $\theta_1 > \theta_2$, and let $\mathcal{L}$ be the group of linear characters of $G$.  Let $\xi\in \mathcal{L}$ be a linear character of order $p^a$ with $p$ coprime to $\sqrt\Delta = \theta_1 - \theta_2$, and let $\chi\in\Irr(G)$. Then, $\xi\chi(D) = \chi(D)$.
\end{thm}

The following proof is quite technical, so we provide a broad explanation of its various moving parts. We would like to show that $\chi(D) = \xi\chi(D)$. To do this, we use Theorem \ref{Q(z)-rep} to take a $\Q(\zeta)$-representation $\repF$ affording the $\C$-character $\chi$, use Lemma \ref{max-ideal} to assume that $\repF$ is a $\frakR$-representation, and construct a homomorphism, which we denote by $\overline{\phantom{x}}$, from the $\frakR$-module underlying $\repF$ to an $L$-module, which is also an $L$-vector space since $L$ is a field. Under this homomorphism, $\overline{\xi(D)\repF(D)} = \overline{\repF(D)}$, since for all $x\in G$, $\overline{\xi(x)}$ is a scalar whose order divides $p^a$ in $L$, so $\overline{\xi(x)}$ must evaluate to $1$. We then show that for any matrix $\repF(D)$ with $\Q(\zeta)$-eigenspace $V_\repF(\theta_\alpha)$ and $\frakR$-eigenspace $M_\repF(\theta_\alpha)$, that
	\[\dim_{\Q(\zeta)} V_\repF(\theta_\alpha) = \rk M_\repF(\theta_\alpha) = \dim_L \overline{M_\repF(\theta_\alpha)} = \dim_L \overline{M_{\overline{\repF}}}(\theta_\alpha),\]
	where $\overline{M_{\overline{\repF}}}(\theta_\alpha)$ is the eigenspace of the representation $\overline\repF$ induced by $\repF$ on the $L$-vector space. This finally allows us to conclude
	\[\dim_{\Q(\zeta)} V_\repF(\theta_\alpha) = \dim_L \overline{M_{\overline{\repF}}}(\theta_\alpha) \stackrel{*}{=}  \dim_L \overline{M_{\overline{\xi\repF}}}(\theta_\alpha) = \dim_{\Q(\zeta)} V_{\xi\repF}(\theta_\alpha), \]
	where we apply the equality $\overline{\xi\repF(D)} = \overline{\repF(D)}$ at $*$. By Lemma \ref{eigenvalues}, equality of the dimensions of the eigenspaces is enough to establish $\xi\chi(D) = \chi(D)$.

	\begin{proof}[Proof of Theorem \ref{linear-invariance}]
	Let $\chi$ be a $\C$-character in $\Irr(G)$. By Theorem \ref{Q(z)-rep}, we may pick a $\Q(\zeta)$-representation $\repF$ of $G$ affording $\chi$. We denote the $\Q(\zeta)$-vector space underlying $\repF$ as $V_\repF$, and we pick a basis $B$ for this space. We define $\frakR$ as in Definition \ref{def-R}, noting that it has a unique maximal ideal $\p$ containing $p$ by Lemma \ref{max-ideal}. Now, $V_\repF$ contains a $\frakR[G]$-module defined by taking all $\frakR$-linear combinations of the vectors $\{\repF(g)v_i:v_i\in B, g\in G\}$. Call this $\frakR$-module $M_\repF$. Since $\frakR$ is a PID by Lemma \ref{max-ideal}, we may pick a basis $w_1, \dots, w_n$ of $M_\repF$. This is also a basis of $V_\repF$, for $B \subseteq M_\repF$. Therefore, we may coordinatize the space $V_\repF$ by the basis vectors $w_1, \dots, w_n$, so that all $\repF(g)$ have their entries in $\frakR$. Thus, $\repF(D)$ is an $\frakR$-matrix, proving part (i) of Lemma \ref{max-ideal}. For further reference, see especially the exposition of Feit \cite[4.1 in Chapter 4]{feit}.
	
	We define the submodule $\p M_\repF \colonequals \{ w \in M_\repF : w = qv, q \in \p, v \in M_\repF\}$. This is a closed subspace of $M_\repF$, so $M_\repF/\p M_\repF$ is well-defined. We therefore define the natural projection $\overline{\phantom{x}} : M_\repF \longrightarrow M_\repF/\p M_\repF$. For an $\frakR$-representation $\repF$, there is a corresponding representation $\overline{\repF}$ on $M_\repF/\p M_\repF$ by $\overline{\repF(g)}\overline w = \overline{\repF(g)w}$. As before, $\overline{\xi(g)} \equiv 1 \pmod \p$, so for any vector $w \in M_\repF$, we have $\overline{\xi(g)} \overline{w} = \overline{w}$. Thus, for all $w \in V_\repF$,
		\begin{align*}
			\overline{\xi\repF(D)}\overline{w} &=  \sum_{x\in D} \overline{\xi(x)\repF(x)} \overline w\\
			                                   &= \sum_{x\in D}(1)\overline{\repF(x)} \overline w\\
											&=\overline{\repF(D)}\overline w
		\end{align*}
	so $\overline{\xi\repF(D)} = \overline{\repF(D)}$ as claimed.
	
	Now, let $V_\repF(\theta_\alpha)$ and $M_\repF(\theta_\alpha)$ be the $\Q(\zeta)$-eigenspace and $\frakR$-eigenspace, respectively, for the eigenvalue $\theta_\alpha$ ($\alpha \in \{1, 2\}$) of the matrix $\repF(D)$. We prove the sequence of equalities
		\begin{equation*}
			\dim_{\Q(\zeta)} V_\repF(\theta_\alpha) = \rk M_\repF(\theta_\alpha) = \dim_L \overline{M_\repF(\theta_\alpha)} = \dim_L \overline{M_{\overline\repF}}(\theta_\alpha),
		\end{equation*}
	where $\overline{M_{\overline\repF}}$ is the vector space $M_\repF/\p M_\repF$ acted on by $\overline\repF$. In particular, $\overline{M_{\overline\repF}}(\theta_\alpha)$ is the $\overline{\theta_\alpha}$ eigenspace of $\overline{\repF(D)}$ in $\overline{M_{\overline\repF}}$.
	
	We first show that $\dim_{\Q(\zeta)} V_\repF(\theta_\alpha) = \rk M_\repF(\theta_\alpha)$. We evidently have $\rk M_\repF(\theta_\alpha) \le \dim_{\Q(\zeta)} V_\repF(\theta_\alpha)$, for we may pick a basis $v_1, \dots, v_k$ of $M_\repF(\theta_\alpha)$ by \cite[Theorem 7.1]{lang}. These are linearly independent in $V_\repF(\theta_\alpha)$, for if there exists some $\Q(\zeta)$-linear combination of the $v_i$ such that $\frac{a_1}{b_i}v_1 + \dots + \frac{a_n}{b_n}v_n = \frac{a_j}{b_j}v_j$, then setting $b = b_1\dots b_n$, we have $b(\frac{a_1}{b_i}v_1 + \dots + \frac{a_n}{b_n}v_n) = b\frac{a_j}{b_j}v_j$, a dependence relation in $M_\repF(\theta_\alpha)$, forcing $a_i = 0$ for each $i$. On the other hand, $\dim_{\Q(\zeta)} V_\repF(\theta_\alpha) \le \rk M_\repF(\theta_\alpha)$, for if we have a basis of $V_\repF(\theta_\alpha)$, then the $\frakR$-multiples of the basis vectors are in $M_\repF(\theta_\alpha)$ and thus linearly independent in $M_\repF(\theta_\alpha)$.
	
	We now prove $\dim_L \overline{M_\repF(\theta_\alpha)} \le \rk M_\repF(\theta_\alpha)$. We must show first that there exists a set of basis vectors $v_1, \dots, v_k$ of $M_\repF(\theta_\alpha)$ which do not vanish in $\overline{M_\repF(\theta_\alpha)}$. For this, we use the Elementary Divisor Theorem (see \cite[Chapter III, Theorem 7.8]{lang}), which states that there exists set of vectors $v_1, \dots, v_n$ of $V_\repF(\theta_\alpha)$ so that for some $r_1, \dots, r_k \in \frakR$, the vectors $r_1v_1, \dots, r_kv_k$ form a basis for $M_\repF(\theta_\alpha)$. Now, if $r_iv_i$ is an eigenvector of $\repF(D)$, then $v_i$ itself must be an eigenvector of $\repF(D)$, for $r_i$ is a scalar. Therefore, $v_i$ is in the span of the basis, so we may choose $r_i$ to be $1$. Therefore, $v_1, \dots, v_k$ is a basis for $M_\repF(\theta_\alpha)$. Since $\p M_\repF$ is defined as the set of $\p$ multiples of the basis vectors of $M_\repF$, then $\overline{v_i} \neq 0$ for each $v_i$. Each $\overline{w} \in \overline{M_\repF(\theta_\alpha)}$ may be expressed as a sum of the $\overline{v_i}$ for we may take some pre-image $w$ of $\overline{w}$, and then $a_1v_1 + \dots + a_kv_k = w$, so $\overline{a_1}\overline{v_1} + \dots + \overline{a_k}\overline{v_k} = \overline w$ as needed.

	The equality $\dim_L \overline{M_\repF(\theta_\alpha)} = \rk M_\repF(\theta_\alpha)$ is obtained by the fact that if for the basis vectors $v_1, \dots, v_n$ and coefficients $a_1, \dots, a_n \in \frakR$, there is a dependence $\overline{a_1v_1 + \dots + a_nv_n} = 0$ in $M_\repF / \p M_\repF$, then $a_1v_1 + \dots + a_nv_n = b_1v_1 + \dots + b_nv_n$ where $b_i \equiv 0 \pmod \p$. Therefore, $(a_1 - b_1)v_1 + \dots + (a_n - b_n)v_n = 0$, so $a_1 = b_1, \dots, a_n = b_n$, and thus $a_i \equiv 0 \pmod \p$. So, there exists only a trivial dependence relation among the $\overline{v_i}$ in $\overline{M_\repF(\theta_\alpha)}$, forcing $\rk M_\repF(\theta_\alpha) \le \dim_L \overline{M_\repF(\theta_\alpha)}$, and $\rk M_\repF(\theta_\alpha) = \dim_L \overline{M_\repF(\theta_\alpha)}$ by extension.

	 Finally, we show that $\dim_L \overline{M_\repF(\theta_\alpha)} = \dim_L \overline{M_{\overline\repF}}(\theta_\alpha)$, where $\overline{M_{\overline{\repF}}}$ is the vector space $M_\repF/\p M_\repF$ acted on by $\overline{\repF}$. The inequality $\dim_L \overline{M_\repF(\theta_\alpha)} \le \dim_L \overline{M_{\overline{\repF}}}(\theta_\alpha)$ is straightforward, for $\overline{M_\repF(\theta_\alpha)}$ is a vector space contained in $\overline{M_{\overline{\repF}}}(\theta_\alpha)$. Now, $\rk M_\repF = \dim V_\repF$, and $\dim_L \overline{M_{\overline\repF}} \le \rk M_\repF$. Therefore, \[\dim V_\repF = \dim V_\repF(\theta_\alpha) + \dim V_\repF(\theta_\beta) = \rk M_\repF(\theta_\alpha) + \rk M_\repF(\theta_\beta) = \dim_L \overline{M_\repF(\theta_\alpha)} + \dim_L \overline{M_\repF(\theta_\beta)}, \] and \[\dim_L \overline{M_\repF(\theta_\alpha)} + \dim_L \overline{M_\repF(\theta_\beta)} \le \dim_L \overline{M_{\overline{\repF}}}(\theta_\alpha) + \dim_L \overline{M_{\overline{\repF}}}(\theta_\beta) \le \dim_L \overline{M_{\overline\repF}} \le \dim V_\repF. \]

	Note, the inequality $\dim_L \overline{M_{\overline{\repF}}}(\theta_\alpha) + \dim_L \overline{M_{\overline{\repF}}}(\theta_\beta) \le \dim_L \overline{M_{\overline\repF}}$ holds because $\overline\theta_\alpha \neq \overline\theta_\beta$, for only in this case is $\overline{\repF}$ diagonalizable. Otherwise, we have $\dim_L \overline{M_{\overline{\repF}}}(\theta_\alpha) + \dim_L \overline{M_{\overline{\repF}}}(\theta_\beta) > \dim_L \overline{M_{\overline\repF}}$.

	Our sequence of inequalities gives \[\dim V_\repF = \dim_L \overline{M_\repF(\theta_\alpha)} + \dim_L \overline{M_\repF(\theta_\beta)} \le \dim_L \overline{M_{\overline{\repF}}}(\theta_\alpha) + \dim_L \overline{M_{\overline{\repF}}}(\theta_\beta) \le \dim V_\repF,\] so the inequalities are saturated, forcing \[\dim_L \overline{M_\repF(\theta_\alpha)} + \dim_L \overline{M_\repF(\theta_\beta)} = \dim_L \overline{M_{\overline{\repF}}}(\theta_\alpha) + \dim_L \overline{M_{\overline{\repF}}}(\theta_\beta).\] By the above equality, $\dim_L \overline{M_{\repF}(\theta_\beta)} \le \dim_L \overline{M_{\overline{\repF}}}(\theta_\beta)$ implies $\dim_L \overline{M_{\overline{\repF}}}(\theta_\alpha) \le \dim_L \overline{M_{\repF}(\theta_\alpha)}$, so we achieve the equality $\dim_L \overline{M_{\overline{\repF}}}(\theta_\alpha) = \dim_L \overline{M_{\repF}(\theta_\alpha)}$.

	Therefore, we have shown that $\dim_{\Q(\zeta)} V_\repF(\theta_\alpha) = \rk M_\repF(\theta_\alpha) = \dim_L \overline{M_\repF(\theta_\alpha)} = \dim_L \overline{M_{\overline\repF}}(\theta_\alpha)$. By the equality $\overline{\repF(D)} = \overline{\xi\repF(D)}$ we also have $\dim_L \overline{M_{\overline\repF}}(\theta_\alpha) = \dim_L \overline{M_{\overline{\xi\repF}}}(\theta_\alpha)$. Therefore, \[\dim_{\Q(\zeta)} V_\repF(\theta_\alpha) = \dim_L \overline{M_{\overline\repF}}(\theta_\alpha) = \dim_L \overline{M_{\overline{\xi\repF}}}(\theta_\alpha) = \dim_{\Q(\zeta)} V_{\xi\repF}(\theta_\alpha).\] Combining this equality with Lemma \ref{eigenvalues},
		\begin{align*}
		\xi\chi(D) &= (\dim_{\Q(\zeta)} V_{\xi\repF}(\theta_1)) \theta_1 + (\dim_{\Q(\zeta)} V_{\xi\repF}(\theta_2) )\theta_2\\
		           &= (\dim_{\Q(\zeta)}V_\repF(\theta_1)) \theta_1 + (\dim_{\Q(\zeta)} V_\repF(\theta_2) )\theta_2\\
		           &= \chi(D),
		\end{align*}
	which proves the theorem.
	\end{proof}

\begin{cor}\label{coprime-linear-char}
Let $G$ be a group with a regular $(v, k, \lambda, \mu)$-PDS $D$ with nonprincipal eigenvalues $\theta_1 > \theta_2$.  Let $\xi,\chi$ be nontrivial linear characters of $G$ of prime-power orders $p^a,q^b$, respectively, such that $p$ and $q$ are coprime to $\sqrt\Delta = \theta_1 - \theta_2$. Then, $\xi(D) = \chi(D)$. Moreover, all linear characters of order coprime to $\sqrt\Delta$ are equivalent over $D$.
\end{cor}
	\begin{proof}
	By Theorem \ref{linear-invariance}, $\chi(D) = \xi\chi(D) = \chi\xi(D) = \xi(D)$. Now, suppose that $\xi$ is a linear character of order coprime to $\sqrt\Delta$. By the fundamental theorem of abelian groups, there exist prime power order linear characters $\xi_1, \dots, \xi_n$ such that $\xi_1\dots\xi_n = \xi$. Again using Theorem \ref{linear-invariance}, we have by induction $\xi(D) = \xi_1\dots\xi_n(D) = \xi_1(D)$, for $\xi_1 = \xi'$ of prime power order. For another linear character $\chi$ of order coprime to $\sqrt\Delta$, we use the same technique to achieve $\chi(D) = \chi'(D)$, for $\chi'$ of prime power order. Since $\chi'(D) = \xi'(D)$, we achieve $\xi(D) = \xi'(D) = \chi'(D) = \chi(D)$ as claimed.
	\end{proof}

\begin{defn}\label{def:pi}
Recall from Lemma \ref{coprime-condition} that, if there exists a prime $p$ dividing $v$ but not $\sqrt{\Delta} = \theta_1 - \theta_2$, then $k - \theta_1$ and $k - \theta_2$ are coprime, and so there exists a factorization $\mu = \mu_1 \mu_2$ such that $v = v_1v_2$, where $v_\alpha = (k - \theta_\alpha)/\mu_\alpha$ for $\alpha = 1,2$, and $p$ divides only one of $v_1$ or $v_2$.  We define the set of primes $\Pi_\alpha$ and $\Pi_\beta$ ($\{\alpha, \beta\} = \{1, 2\}$) with products $\pi_\alpha$ and $\pi_\beta$ defined so that $\pi_\alpha$ and $\pi_\beta$ are the largest integers dividing $v_\alpha$ and $v_\beta$, respectively, such that $\gcd(\sqrt\Delta, \pi_\alpha) = \gcd(\sqrt\Delta,\pi_\beta) = 1$.
\end{defn}

\begin{cor}\label{xi-divides-theta_alpha}
    Let $G$ be a group with a regular $(v, k, \lambda, \mu)$-PDS $D$ with nonprincipal eigenvalues $\theta_1 > \theta_2$, and let $\mathcal{L}$ be the group of linear characters of $G$.  Suppose $\pi_\alpha$ is defined as in Definition \ref{def:pi}, and suppose that $\xi \in \mathcal{L}$ has order dividing $\pi_\alpha$. Then, each $\xi \in \mathcal{L}$ with order coprime to $\sqrt\Delta$ has an order dividing $\pi_\alpha$.
\end{cor}
	\begin{proof}
		Let $\xi \in \mathcal{L}$ of prime order $p$ such that $p$ divides $\pi_\alpha$. Let $\xi \in \mathcal{L}$ have prime order $q$ coprime to $\sqrt\Delta$. Then, by Corollary \ref{coprime-linear-char}, $\xi(D) = \chi(D)$. So, $k - \xi(D) = k - \theta_\alpha = k - \chi(D)$. Using Lemma \ref{kernel-D}, both $p$ and $q$ divide $k - \theta_\alpha$. Since $p, q$ divide $v$, and neither $p$ nor $q$ divide $k - \theta_\beta$ by Lemma \ref{coprime-condition}, we conclude that $q$ must divide $v_\alpha$. Since $q$ is coprime to $\sqrt\Delta$, we have $q \in \Pi_\alpha$. It follows that all primes $q$ dividing $|\mathcal{L}|$ and coprime to $\sqrt\Delta$ must divide $\pi_\alpha$. Therefore, all $q^a$ dividing $|\mathcal{L}|$ must also divide $\pi_\alpha$. In conclusion, for every linear character $\xi$ of order $n$ coprime to $\sqrt\Delta$, the order $n$ is a product of prime powers $q_1^{a_1}\dots q_m^{a_m}$. Since each $q_i^{a_i}$ is coprime to $\sqrt\Delta$, we know $q_i^{a_i}$ divides $\pi_\alpha$, for each $i$, so $q_1^{a_1}\dots q_m^{a_m} = n$ divides $\pi_\alpha$.
	\end{proof}
	
The following theorem is a consequence of Corollary \ref{coprime-linear-char}. The reader may compare Theorem \ref{value-phi(a)} to \cite[Theorem 2.7]{ott}.

\begin{thm}\label{value-phi(a)}
Let $G$ be a group with a regular $(v, k, \lambda, \mu)$-PDS $D$ with nonprincipal eigenvalues $\theta_1 > \theta_2$, let $H$ be a subgroup of linear characters of $G$ with order coprime to $\sqrt{\Delta} = \theta_1 - \theta_2$, and let $N$ be the intersection of the kernels of the characters in $H$.  If $a\not\in N$, then for nontrivial $\xi\in H$, $|C_G(a)||a^G\cap D|=k-\xi(D)$.
\end{thm}	
	\begin{proof}
	Let $\xi\in H$ be a nonprincipal linear character. Let $\chi$ be an arbitrary irreducible character of $G$. We have
		\begin{align*}
			\Phi(a) &= |C_G(a)||a^G\cap D|\\
			        &= \sum_{\xi\in H}\xi(D)\xi(a) + \sum_{\chi\not\in H}\chi(D)\chi(a).\\
			\intertext{By Corollary \ref{coprime-linear-char}, all the nonprincipal $\xi(D)$ are equal. Therefore,}
			\Phi(a) &= k - \xi(D) + \xi(D)\sum_{\xi\in H}\xi(a) + \sum_{\chi\not\in H}\chi(D)\chi(a)\\
			\intertext{By orthogonality, the sum of characters in $H$ is zero, leaving}
			\Phi(a) &= k - \xi(D) + \sum_{\chi\not\in H}\chi(D)\chi(a)
			\intertext{We now show that $\sum_{\chi\not\in H}\chi(D)\chi(a)$ is zero. The abelian group $H$ acts on the set of characters by left multiplication. Denote an orbit of $\chi$ under $H$ by $\calO$. Then it suffices to show that}
			0 &=\sum_{\tau\in \calO} \tau(a)
			\intertext{We let $H_\chi$ be the stabilizer of $\chi$ in $H$. Let $\xi_1,...,\xi_r\in H$ be coset representatives of $H_\chi$ in $H$. Therefore,}
			\sum_{\tau\in \calO} \tau(a) &= \sum_{i=1}^r\xi_i\chi(a)\\
			            &= \left(\sum_{i=1}^r\xi_i(a)\right)\chi(a)
			\intertext{We multiply by the sum of elements in the stabilizer of $\chi$.}
			\left(\sum_{\eta\in H_\chi}\eta(a)\right)\left(\sum_{\tau\in \calO} \tau(a)\right) &=\left(\sum_{\eta\in H_\chi}\eta(a)\right)\left(\sum_{i=1}^r\xi_i(a)\right)\chi(a)\\
			            &= \left(\sum_{\eta\in H_\chi}\sum_{i=1}^r\eta\xi_i(a)\right) \chi(a)\\
			            &=\left(\sum_{\xi\in H}\xi(a)\right) \chi(a)
		\end{align*}
	Again, $\left(\sum_{\xi\in H}\xi(a)\right)\chi(a) = 0$ by the orthogonality relations. If $\sum_{\eta\in H_\chi}\eta(a)\neq 0$, we are done. If it is $0$, then there must be some $\eta$ such that $\eta(a)\neq 1$, since $a$ is then not in the kernel of the stabilizer. In this case, $\eta(a)\chi(a) = \chi(a)$, so $\chi(a)=0$. Either way, $\sum_{\chi\not\in H}\chi(D)\chi(a) = 0$, so $|C_G(a)||a^G\cap D| = k-\xi(D)$.
	\end{proof}

\begin{lem}\label{coprime-centralizer}
    Let $G$ be a group with a regular $(v, k, \lambda, \mu)$-PDS $D$ with nonprincipal eigenvalues $\theta_1 > \theta_2$, let $H$ be a subgroup of linear characters of $G$ with order coprime to $\sqrt{\Delta} = \theta_1 - \theta_2$, let $N$ be the intersection of the kernels of the characters in $H$, and let $\pi_\alpha, \pi_\beta$ be defined as in Definition \ref{def:pi}.  For $a \not \in N$, the greatest common divisor of $|C_G(a)|$ and at least one of $\pi_\alpha, \pi_\beta$ is $1$.
\end{lem}
	\begin{proof}
	By Theorem \ref{value-phi(a)}, $|C_G(a)|$ divides $k-\xi(D)$. Without loss of generality, we may suppose $\xi(D) = \theta_\alpha$. Then, by Lemma \ref{coprime-condition}, $\gcd(|C_G(a)|, \pi_\beta) = 1$, for otherwise there exists some $p\in \Pi_\beta$ dividing $|C_G(a)|$, and so by Theorem \ref{value-phi(a)}, $p$ divides $k-\theta_\alpha = |C_G(a)||a^G\cap D|$. Therefore, $p$ divides both $k-\theta_\alpha$ and $k-\theta_\beta$, a contradiction.
	\end{proof}

	Once again, we invite the reader to compare the following Theorem \ref{mod-restriction} to \cite[Theorem 2.8]{ott}.

\begin{thm}\label{mod-restriction}
    Let $G$ be a group with a regular $(v, k, \lambda, \mu)$-PDS $D$ with nonprincipal eigenvalues $\theta_1 > \theta_2$, let $\mathcal{L}$ be the group of linear characters of $G$, and let $\Pi_\alpha, \Pi_\beta, \pi_\alpha, \pi_\beta$ be defined as in Definition \ref{def:pi}.  Suppose that there exists a prime $p$ dividing $|G|$ so that $p\not|\;\sqrt\Delta = \theta_1 - \theta_2$. Without loss of generality, let $p\in\Pi_\alpha$ and suppose that $p$ divides $|\mathcal{L}|$. Finally, let $P$ be a Sylow $p$-subgroup of $G$. Then $\gcd(|N_G(P)|, \pi_\beta) = 1$. Moreover, if $G$ is solvable, then
	\[ \pi_\beta \equiv 1\pmod p. \]
\end{thm}
	\begin{proof}
		Since $p$ is coprime to $\sqrt\Delta$, and $p$ divides $|\mathcal{L}|$, we have that $H$ is nontrivial. Thus, $N \neq G$, so the hypotheses for Lemma \ref{coprime-centralizer} apply. Suppose there exists $q$ dividing $|N_G(P)|$ such that $q\in\Pi_\beta$. Then, there is an element $g\in N_G(P)$ of order $q$. By \cite[Theorem 3.5]{gorenstein}, $P = C_P(g)[\langle g\rangle, P]$. Each element in $C_P(g)$ centralizes $g$, so each element $a\in C_P(g)$ contains $g$ in its centralizer. Thus, for some $m$, $\gcd(|C_G(a)|, \pi_\beta) = q\cdot m$, and since $a \in C_P(g)$ and $a \in C_G(a)$, then the order of $a$ divides both $|P|$ and $|C_G(a)|$. Thus, $\gcd(|C_G(a)|, |P|) > 1$, forcing $\gcd(|C_G(a)|, \pi_\alpha) > 1$. Therefore, by the contrapositive of Lemma \ref{coprime-centralizer}, $a$ is in $N$. Thus, $P = C_P(g)[\langle g\rangle, P] \subseteq N G' \subseteq N$ (where $G' = [G,G]$), given that $G' \subseteq N$, since for each $\xi \in H$, we must have $G' \subseteq \ker\xi$. So, $P \subseteq N$. But, $p$ divides $|\mathcal{L}|$, a contradiction. Therefore, $\gcd(|N_G(P)|, \pi_\beta) = 1$.

	Now, suppose in addition that $G$ is a solvable group. By Hall's Theorem, there is a subgroup $K$ of $G$ which is a $\pi_\alpha\pi_\beta$-group. Moreover, since $G$ is solvable, $K$ is solvable, so there exists a subgroup $U$ of $K$ which is a $\pi_\alpha$-group. Since $p \in \Pi_\alpha$, then $|P|$ divides $\pi_\alpha$. Therefore, $P$ is contained in some Hall subgroup, $L$. Any two Hall subgroups are conjugate, and thus there exists $g \in G$ such that $g^{-1}Lg = K$. So, $K$ contains a Sylow $p$-subgroup, $g^{-1}Pg$. Therefore, we are justified in assuming that $P \subseteq K$, forcing $P \subseteq U$. Thus by our preceding argument, $N_K(P) = N_K(U)$. Therefore, using Sylow III,
		\begin{equation*}
		1\equiv \frac{|K|}{|N_K(P)|}\equiv \frac{\pi_\alpha \pi_\beta}{|N_K(P)|} \equiv \frac{\pi_\alpha}{|N_K(P)|}\pi_\beta\equiv \frac{|U|}{|N_U(P)|}\pi_\beta \equiv \pi_\beta \pmod p.
		\end{equation*}
	\end{proof}

\subsection{Intersections of $D$ with certain normal subgroups}
\label{subsec:normal}

Let $G' = [G,G]$ be the derived subgroup of $G$, and let $\calL$ be the group of linear characters of $G$; note that $\calL \cong G/G'$. We will define $H$ to be a subgroup of linear characters with order coprime to $\sqrt{\Delta}$ and $N$ to be the intersection of the kernels of the characters in $H$. In the case when $\gcd(v, \sqrt{\Delta}) = 1$, we see that $|\calL|$ is coprime to $\sqrt{\Delta}$, so we can choose $H$ to be $\calL$, in which case $N = G'$. We note first the following result.

\begin{prop}\label{prop:HsameonD}
        Let $G$ be a group with a regular $(v, k, \lambda, \mu)$-PDS $D$ with nonprincipal eigenvalues $\theta_1 > \theta_2$, let $H$ be a subgroup of linear characters of $G$ with order coprime to $\sqrt{\Delta} = \theta_1 - \theta_2$, and let $N$ be the intersection of the kernels of the characters in $H$.  If $|H| \neq \{1\}$ and $\xi(D) = \theta_\alpha$ for some nonprincipal linear character $\xi$ in $H$, then $\xi(D) = \theta_\alpha$ for all nonprincipal $\xi \in H$. Moreover, for all $a \in G \backslash N$,
		 \[ \Phi(a) = |C_G(a)| |a^G \cap D| = k - \theta_\alpha.\]
\end{prop}
        \begin{proof}
                This follows immediately from the above discussion, Corollary \ref{coprime-linear-char}, and Theorem \ref{value-phi(a)}.
        \end{proof}

Going forward, we will assume that $N$ is a proper subgroup of $G$ (so that $H$ exists), $\xi(D) = \theta_\alpha$ for all nonprincipal linear characters $\xi$ of $H$, and we will denote the eigenvalue that is not $k$ or $\theta_\alpha$ by $\theta_\beta$. While Proposition \ref{prop:HsameonD} is quite strong, we will in fact be able to calculate $|N \cap D|$ exactly.

\begin{thm}\label{thm:G'capD}
        Let $G$ be a group with a regular $(v, k, \lambda, \mu)$-PDS $D$ with nonprincipal eigenvalues $\theta_1 > \theta_2$, let $H$ be a subgroup of linear characters of $G$ with order coprime to $\sqrt{\Delta} = \theta_1 - \theta_2$, and let $N$ be the intersection of the kernels of the characters in $H$.  If $N < G$, then
		\[ |N \cap D| = \frac{k - \theta_\alpha}{|H|} + \theta_\alpha,\]
		where $\xi(D) = \theta_\alpha$ for all nonprincipal $\xi \in H$.
\end{thm}
        \begin{proof}
 		First, we apply Lemma \ref{lem:sumPhi}(iii) to $N$ to obtain
 			\[ \sum_{x \in N} \Phi(x) = |G| \cdot |N \cap D|.\]
 		Next, using Proposition \ref{prop:HsameonD} and recalling that $|G| = |N|\cdot|H|$, we have
 			\begin{align*}
				 \sum_{x \in N} \Phi(x) &= \sum_{g \in G} \Phi(g) - \sum_{g \notin N} \Phi(g)\\
                        		 &= |G|k - \left(|G| - |N|\right)(k - \theta_\alpha)\\
                        		 &= |N|(k - \theta_\alpha) + |G|\theta_\alpha \\
 			 \end{align*}
		Therefore,
			\begin{align*}
 				|N \cap D| &= \frac{1}{|G|}\left( |N|(k - \theta_\alpha) + |G|\theta_\alpha) \right)\\
              			&= \frac{k - \theta_\alpha}{|H|} + \theta_\alpha.
			\end{align*}
	\end{proof}

Next, we are able to put even more severe restrictions on the intersection of the other cosets of $N$ with $D$.

\begin{thm}\label{thm:Ncosetintersection}
        Let $G$ be a group with a regular $(v, k, \lambda, \mu)$-PDS $D$ with nonprincipal eigenvalues $\theta_1 > \theta_2$, let $H$ be a subgroup of linear characters of $G$ with order coprime to $\sqrt{\Delta} = \theta_1 - \theta_2$, and let $N$ be the intersection of the kernels of the characters in $H$.  If $N < G$, and $a \in G \backslash N$, then
                       \[ |Na \cap D| = \frac{k - \theta_\alpha}{|H|},\]
        where $\xi(D) = \theta_\alpha$ for all nonprincipal $\xi \in H$.
\end{thm}
        \begin{proof}
                Note by assumption that $G' \subseteq N$, since $G'$ is the intersection of the kernels of all linear characters. Moreover, if $b \in a^G$, then 
                \[ b = a^g = g^{-1}ag = (g^{-1}aga^{-1})a = [g, a^{-1}]a \in G'a,\]
                and so $a^G \subseteq G'a \subseteq Na$, and so $Na$ will be a disjoint union of conjugacy classes. Let $a_1^G, \dots, a_s^G$ be the distinct conjugacy classes in $Na$. Then, using Proposition \ref{prop:HsameonD}, we have
                \begin{align*}
                    |Na \cap D| &= \sum_{i = 1}^s |a_i^G \cap D|\\
                                &= \sum_{i = 1}^s \frac{k - \theta_\alpha}{|C_G(a_i)|}\\
                                &= \frac{k - \theta_\alpha}{|G|} \sum_{i = 1}^s |a_i^G|\\
                                &= \frac{(k - \theta_\alpha)|N|}{|G|}\\
                                &= \frac{k - \theta_\alpha}{|H|}.
                \end{align*}
        \end{proof}

When $\gcd(v, \sqrt{\Delta}) = 1$, it follows that $H = \calL$ and $N = G'$, so we get the following corollary, which helps to explain why the sizes of conjugacy class intersections with a regular PDS take on very few values in practice in these situations (see Appendix \ref{app:C}).

\begin{cor}
    \label{cor:LsameonD}
    Assume the group $\mathcal{L}$ of linear characters of $G$ is nontrivial.  If $G$ contains a regular $(v,k, \lambda, \mu)$-PDS, $\gcd(v, \sqrt{\Delta}) = 1$, and $\xi(D) = \theta_\alpha$ for a nontrivial $\xi \in \mathcal{L}$, then
    \[ |G' \cap D| = \frac{k - \theta_\alpha}{|\calL|} + \theta_\alpha\]
    and, for all $a \notin G'$,
    \[ |G'a \cap D| = \frac{k - \theta_\alpha}{|\calL|}.\]
\end{cor}

\subsection{Example applications}
\label{subsec:applications}

We now demonstrate how these generalizations of Ott's work can be applied in other settings.

\begin{prop}\label{odd-order-GQ}
There is no group acting regularly on the points of a GQ with order $(q,q^2-q)$ when $q$ is even.
\end{prop}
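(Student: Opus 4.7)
The plan is to apply Theorem \ref{mod-restriction} to rule out any regular action on the points of a GQ of order $(q,q^2-q)$ with $q$ even. First I would record the SRG parameters of such a GQ as $v=(q+1)(q^3-q^2+1)$, $k=q^3-q^2+q$, $\lambda=q-1$, $\mu=q^2-q+1$, with eigenvalues $\theta_1=q-1$ and $\theta_2=-(q^2-q+1)$, so that $\sqrt{\Delta}=\theta_1-\theta_2=q^2$. When $q$ is even, both $q+1$ and $q^3-q^2+1$ are odd, so $v$ itself is odd; in particular $\gcd(v,\sqrt{\Delta})=1$, which is the standing hypothesis of Subsection \ref{subsec:ott2}. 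Moreover, by the Feit--Thompson theorem any group $G$ of order $v$ is solvable, which lets me invoke the stronger ``solvable'' conclusion of Theorem \ref{mod-restriction}.

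Next I would pin down the Ott factorization from Lemma \ref{coprime-condition}. A short calculation gives $k-\theta_1=q^3-q^2+1$ and $k-\theta_2=q^3+1=(q+1)(q^2-q+1)$, and the Euclidean algorithm shows $\gcd(q^3-q^2+1,\,q^2-q+1)=1$. Hence the only valid factorization $\mu=\mu_1\mu_2$ with $\mu_1\mid k-\theta_1$ is $\mu_1=1$, $\mu_2=q^2-q+1$, giving $v_1=q^3-q^2+1$ and $v_2=q+1$. Since $v$ is coprime to $\sqrt{\Delta}$, the pair $(\pi_\alpha,\pi_\beta)$ coincides with $(v_1,v_2)$ or $(v_2,v_1)$. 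Because $G$ is solvable and nontrivial we have $G'<G$, so $|\mathcal{L}|>1$, and by Corollary \ref{xi-divides-theta_alpha} every prime dividing $|\mathcal{L}|$ also divides $\pi_\alpha$. Picking any such prime $p$, Theorem \ref{mod-restriction} then delivers $\pi_\beta\equiv 1\pmod p$.

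I would conclude with a two-case arithmetic check. If $\pi_\alpha=q^3-q^2+1$ and $\pi_\beta=q+1$, then $q+1\equiv 1\pmod p$ gives $p\mid q$; combined with $p\mid q^3-q^2+1$, this forces $p\mid 1$, impossible. If instead $\pi_\alpha=q+1$ and $\pi_\beta=q^3-q^2+1$, then $p$ is odd (since $q+1$ is odd) and coprime to $q$, while $\pi_\beta\equiv 1\pmod p$ gives $p\mid q^2(q-1)$; coprimality to $q$ forces $p\mid q-1$, and combining with $p\mid q+1$ forces $p\mid 2$, again impossible.

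I expect the only real subtlety to be verifying that some prime dividing $|\mathcal{L}|$ actually lies in $\Pi_\alpha$, which is precisely where Feit--Thompson (to ensure $G'<G$) and Corollary \ref{xi-divides-theta_alpha} (to push those primes into $\Pi_\alpha$) earn their keep; once that is in place, the two arithmetic cases above close out the argument.
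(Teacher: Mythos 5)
Your proposal is correct and follows essentially the same route as the paper: compute the PDS parameters and eigenvalues, note $\sqrt{\Delta}=q^2$ while $v$ is odd, use Feit--Thompson for solvability, pin down $v_1=q^3-q^2+1$ and $v_2=q+1$ via Lemma \ref{coprime-condition}, and derive a contradiction in both labelings from the congruence $\pi_\beta\equiv 1\pmod p$ of Theorem \ref{mod-restriction}. The only (cosmetic) differences are that you establish $\mu_1=1$ by a Euclidean gcd computation rather than the paper's mod-$p$ manipulation and you route the existence of a suitable prime in $\Pi_\alpha$ through Corollary \ref{xi-divides-theta_alpha}, which is harmless but not needed.
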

	\begin{proof}
	Let $q$ be even, and assume that a group $G$ acts regularly on the point set of a GQ with order $(q, q^2 - q)$. This implies the existence of a $((q+1)(q^2-q+1), q(q^2-q+1), q-1, q^2-q+1)$-PDS; in this case, the eigenvalues $\theta_1, \theta_2$ are $q-1$, $-q^2+q-1$, and $\sqrt\Delta=q^2$. Note that if $p|\sqrt\Delta$, then $p\not|\; v = (q+1)(q^2-q+1)$, so we may apply Theorem \ref{mod-restriction}. Thus, we let $p$ divide $v$. Now, $k-\theta_1 = q^3-q^2+1$ and $k-\theta_2 = (q+1)(q^2-q+1)$. We show that $v_1 = k-\theta_1$ and $v_2 = \frac{k-\theta_2}{\mu}$. So, suppose $p|\mu$ and $p|k-\theta_1$. Then mod $p$,
		\begin{align*}
		0 &\equiv q^3-q^2+1\\
		  &\equiv q(q^2-q+1) - q + 1\\
		  &\equiv 1-q \\
		  &\equiv 1-q - (q^2-q+1)\\
		  &\equiv q^2
		\end{align*}
	 So $p|q$ and $p|1-q$, forcing $p=1$, so $\mu$ cannot divide $k-\theta_1$. Therefore, $v_1 = q^3-q^2+1$ and $v_2 = q+1$.
	 
	 Note that $v$ is odd, so if $G$ acts regularly on the points of this GQ, then $G$ is solvable by Feit-Thompson. Therefore, if there is some linear character of order $p$, then either $v_1 \equiv 0$ and $v_2\equiv 1\pmod p$ or $v_1\equiv 1$ and $v_2\equiv 0\pmod p$ by Theorem \ref{mod-restriction}. If $v_1 \equiv 0$ and $v_2 \equiv 1$, then $q \equiv 0$, forcing $v_1 \equiv 1$, and thus $p=1$. If $v_1 \equiv 1$ and $v_2 \equiv 0$, then $q \equiv -1$, so $v_1 \equiv (-1)^3 - (-1)^2 + 1 \equiv -1$, forcing $-1 \equiv 1$ and thus $p = 2$. Therefore, $2 | q+1$, and so $q$ is odd, a contradiction. Therefore, no such $G$ exists.
	 \end{proof}
	 
\begin{prop}\label{DS-restriction}
If there exists a $(4w^2 - 1, 2w^2, w^2, w^2)$-PDS $D$ in a group $G$, then $w \equiv 1 \pmod 3$, all linear characters of $G$ evaluate to $-w$ on $D$, every conjugacy class intersection with $D$ is a multiple of $w$, and the order of the group of linear characters is a power of $3$.
\end{prop}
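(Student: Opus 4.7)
The plan is to exploit the very clean numerics of these parameters: with $\lambda=\mu=w^2$ and $k=2w^2$ we get $\sqrt{\Delta}=2w$, $\theta_1=w$, $\theta_2=-w$, and the only available factorization from Lemma \ref{coprime-condition} is $v_1=2w-1$, $v_2=2w+1$, $\mu_1=\mu_2=w$. Since $\gcd(v,\sqrt{\Delta})=\gcd((2w-1)(2w+1),2w)=1$, every linear character of $G$ has order coprime to $\sqrt{\Delta}$, so I may take $H=\calL$ and $N=G'$ in Subsection \ref{subsec:normal}. Because $v=4w^2-1$ is odd, Feit--Thompson forces $G$ to be solvable, so $|\calL|=|G/G'|>1$.

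Next I would apply Corollary \ref{coprime-linear-char} to get that all nonprincipal linear characters take a common value $\theta_\alpha\in\{w,-w\}$ on $D$. Lemma \ref{kernel-D} gives that the order of each such $\xi$ divides $k-\theta_\alpha$; using $\gcd(p,2w)=1$ this order must divide $v_1=2w-1$ when $\theta_\alpha=w$ and $v_2=2w+1$ when $\theta_\alpha=-w$, and hence $|\calL|$ divides the corresponding $v_i$. Now for any prime $p\mid|\calL|$, Theorem \ref{mod-restriction} (available since $G$ is solvable) yields $v_\beta\equiv 1\pmod p$. If $\theta_\alpha=w$, this gives $p\mid 2w$ together with $p\mid 2w-1$, a contradiction. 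If $\theta_\alpha=-w$, it gives $p\mid 2(w-1)$, and since $p$ is odd we get $p\mid w-1$; combined with $p\mid 2w+1=2(w-1)+3$ this forces $p=3$. So the first case is ruled out, $|\calL|$ is a (positive) power of $3$, every nonprincipal linear character evaluates to $-w$, and $3\mid 2w+1$, i.e., $w\equiv 1\pmod 3$.

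For the last assertion, I would use Lemma \ref{eigenvalues} to observe that for every nonprincipal $\chi\in\Irr(G)$,
\[ \chi(D)=w\bigl(\dim V_\repF(w)-\dim V_\repF(-w)\bigr)\in w\Z,\]
and the trivial character contributes $k=2w^2=2w\cdot w$, so $\chi(D)\in w\Z$ for every $\chi\in\Irr(G)$. Lemma \ref{phi-function} together with the integrality in Lemma \ref{lem:sumPhi}(i) then gives
\[\Phi(g)=\sum_{\chi\in\Irr(G)}\chi(D)\chi(g)=w\sum_{\chi\in\Irr(G)}\tfrac{\chi(D)}{w}\chi(g),\]
and the right-hand sum is an algebraic integer. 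Thus $\Phi(g)/w$ is a rational algebraic integer, hence an integer, so $w\mid\Phi(g)=|C_G(g)|\,|g^G\cap D|$. Since $\gcd(w,v)=\gcd(w,4w^2-1)=1$ and $|C_G(g)|$ divides $v$, we conclude $w\mid|g^G\cap D|$ for every conjugacy class.

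The main obstacle is really the case analysis in the second paragraph: one has to carefully track how the two eigenvalues $\pm w$ align with the two factors $2w\mp 1$ in order to invoke Corollary \ref{xi-divides-theta_alpha} and Theorem \ref{mod-restriction} correctly. Once that bookkeeping is set up, both branches collapse immediately, one to a contradiction and the other to $p=3$, and the remaining divisibility claim is a short algebraic-integer argument.
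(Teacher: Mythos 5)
Your argument is correct, and its backbone coincides with the paper's: oddness of $v=4w^2-1$ plus Feit--Thompson gives solvability, and Theorem \ref{mod-restriction} applied to each prime $p$ dividing $|\calL|$, with the forced factorization $v_1=2w-1$, $v_2=2w+1$ from Lemma \ref{coprime-condition}, yields exactly the paper's dichotomy, killing one branch ($p\mid 1$) and reducing the other to $p=3$, hence $w\equiv 1\pmod 3$ and $|\calL|$ a power of $3$. Where you diverge is in how you pin down $\xi(D)=-w$ and in how you prove $w\mid |g^G\cap D|$. The paper identifies the eigenvalue by choosing $a\notin [G,G]$ of order divisible by $3$ and using Theorem \ref{value-phi(a)}: $|C_G(a)|\,|a^G\cap D|=k-\xi(D)$ together with $3\mid |C_G(a)|$ and $3\nmid w$ forces $\xi(D)=-w$. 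You instead use Corollary \ref{coprime-linear-char} to get a single common value $\theta_\alpha$ and Lemma \ref{kernel-D} to decide which factor $2w\mp 1$ the primes of $|\calL|$ divide, so that the $\theta_\alpha=w$ branch dies by the same Theorem \ref{mod-restriction} contradiction; this makes the eigenvalue determination a byproduct of the case analysis rather than a separate centralizer computation, at the (small) cost of invoking Corollary \ref{coprime-linear-char}. For the conjugacy-class divisibility, your route is actually more complete than the written proof: the paper's argument via Theorem \ref{value-phi(a)} only covers classes outside $G'$ (the general statement is essentially Lemma \ref{shared-prime-class-intersection}, proved later with local rings), whereas your observation that $w\mid\chi(D)$ for every $\chi\in\Irr(G)$ (Lemma \ref{eigenvalues}, since $\theta_1=-\theta_2=w$ and $1_G(D)=2w^2$), hence $\Phi(g)/w$ is a rational algebraic integer by Lemmas \ref{phi-function} and \ref{lem:sumPhi}(i), and $\gcd(w,|C_G(g)|)=1$, gives a short self-contained proof for all classes. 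One small inaccuracy: your parenthetical claim that "$|\calL|$ divides the corresponding $v_i$" does not follow from Lemma \ref{kernel-D} as stated, which applies only to characters of prime order; but you never use more than the prime-divisor statement, so nothing is affected.
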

	\begin{proof}
	Note that $4w^2-1$ is odd, so $G$ is a solvable group. Moreover, $\sqrt{\Delta} = 2w$, $\theta_1 = w$, and $\theta_2 = -w$, so $v_1 = 2w-1$ and $v_2 = 2w+1$. Since $G$ is solvable, it has a nontrivial group of linear characters, so let $p$ be a prime dividing the order of this group. Then, by Theorem \ref{mod-restriction}, either $v_1 \equiv 1 \pmod p$ and $v_2 \equiv 0 \pmod p$, or $v_1 \equiv 0 \pmod p$ and $v_2 \equiv 1 \pmod p$. After some algebraic manipulation, it is clear that $v_1 \equiv 0 \pmod p$ and $v_2 \equiv 1 \pmod p$ implies $p=1$, and that $v_1 \equiv 1 \pmod p$ and $v_2 \equiv 0 \pmod p$ implies $p=3$. Therefore, $3$ divides $2w+1$, implying $w \equiv 1 \pmod 3$. Additionally, by Theorem \ref{mod-restriction}, the linear group of characters must be a power of $3$.
	
	Now, let $a \in G$ be an element of order divisible by $3$ such that $a \not\in [G,G]$. Such an element must exist, for $3$ divides $|\mathcal{L}|$. In addition, let $\xi \in \mathcal{L}$. Then, by Theorem \ref{value-phi(a)}, $|C_G(a)||a^G\cap D| = k - \xi(D)$. By Lemma \ref{eigenvalues}, $\xi(D) = \pm w$. Set $w = 3m+1$ for some $m \in \Z$. Therefore, 
		\begin{equation*}
		|C_G(a)| = \frac{k - (\pm w)}{|a^G\cap D|} = \frac{2w^2 - (\pm w)}{|a^G\cap D|} = w\cdot\frac{2w - (\pm 1)}{|a^G\cap D|} = w\cdot \frac{6m + 2 - (\pm 1)}{|a^G\cap D|}.
		\end{equation*}
	Since $a$ has order divisible by $3$, then $3$ divides $|C_G(a)|$, so $6m + 2 - (\pm 1) \equiv 0 \pmod 3$. Since $w\neq 0\pmod 3$, then $2 - (\pm 1) \equiv 0 \pmod 3$. It follows that $\xi(D) = -w$, so that $2 - (\pm 1)$ equals $3$.
	\end{proof}


\section{Computational methods}
\label{sect:comp}

In the previous section we established certain theoretical results on the characters of groups containing a partial difference set. We now develop related techniques to construct partial difference sets inside of a group using the group characters. In particular, for a partial difference set $D$ inside of a group $G$ with $p^\ell$ dividing $\sqrt\Delta$, we develop techniques to give the value of $|D\cap h^G| \pmod{p^\ell}$ for particular primes $p$, and calculate $|D\cap h^G|$ by extension.

These methods, combined with Theorem \ref{value-phi(a)}, makes it rather quick to search for groups which could contain a PDS just by utilizing the irreducible characters of the group. In some cases, simply knowing the values $|D\cap h^G| \pmod{p^\ell}$ can rule out the existence of a PDS for a certain parameter sets inside of a nonabelian group, such as in Example \ref{No-DS}. In other cases, targeted information about $|D\cap h^G|$ can be used to quickly construct partial difference sets even inside of groups with up to $1000$ elements, using the search method described in \cite{brady}.

\subsection{Modular computation of conjugacy class intersection sizes for PDSs}

Now, as before, let $G$ be a group containing a $(v, k, \lambda, \mu)$-PDS $D$ with eigenvalues $\theta_1 = \frac12(\lambda-\mu +\sqrt{\Delta})$ and $\theta_2 = \frac12(\lambda-\mu - \sqrt{\Delta})$. Let $\repF$ be an irreducible $G$ representation, and let $\chi\in\Irr(G)$ be afforded by $\repF$. Denote the eigenspace of $\repF(D)$ corresponding to $\theta_i$ by $V_\repF (\theta_i)$. For $p$ dividing $\sqrt\Delta$, construct the corresponding ring $\frakR$ given by Definition \ref{def-R}. Let $\ell$ be maximal such that $p^\ell$ divides $\sqrt\Delta$, and let $(\pi) = \p$ be the unique maximal ideal containing $p$. Then, $p^\ell \in (\pi^\ell)$, so set $R = \frakR/(\pi^\ell)$. Then, we have the following. (Compare to \cite[Lemma 3.1]{swartz-tauscheck}.)

\begin{lem}\label{char-p-phi}
Using the notation above, for $h \in G$ such that $h \neq 1$, suppose that $p^\ell$ does not divide $|C_G(h)|$. Then, over the ring $R$,
	\begin{equation*}
		|h^G \cap D||C_G(h)| \equiv k - \theta_2 \pmod {(\pi^\ell)},
	\end{equation*}
and if $h = 1$, then \[|h^G \cap D||C_G(h)| \equiv k + \theta_2(|G| - 1) \pmod {(\pi^\ell)}.\]
In particular, 
\begin{equation*}
		|h^G \cap D||C_G(h)| \equiv k - \theta_2 \pmod {p^\ell},
	\end{equation*}
and if $h = 1$, then \[|h^G \cap D||C_G(h)| \equiv k + \theta_2(|G| - 1) \pmod {p^\ell}.\]
\end{lem}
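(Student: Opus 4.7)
The plan is to use the character-theoretic expression for $\Phi(h) = |h^G \cap D||C_G(h)|$ from Lemma \ref{phi-function} together with Lemma \ref{eigenvalues}, and then reduce modulo $(\pi^\ell)$. The key observation is that because $p^\ell$ divides $\sqrt{\Delta} = \theta_1 - \theta_2$ and $p \in (\pi)$, we have $\theta_1 \equiv \theta_2 \pmod{(\pi^\ell)}$. This collapses the contribution of each nonprincipal irreducible character to a scalar multiple of $\theta_2$ and reduces the sum to something we can evaluate with orthogonality.

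Concretely, first combine Lemma \ref{phi-function} with Lemma \ref{lem:sumPhi}(i) (which says $\chi(D) \in \Z$, so $\overline{\chi(D)} = \chi(D)$) to obtain
\[ \Phi(h) = \sum_{\chi \in \Irr(G)} \chi(D)\chi(h). \]
The principal character $\chi_0$ contributes $\chi_0(D)\chi_0(h) = k$. For a nonprincipal $\chi$ afforded by an irreducible representation $\repF$, Lemma \ref{eigenvalues} gives $\chi(D) = (\dim V_\repF(\theta_1))\theta_1 + (\dim V_\repF(\theta_2))\theta_2$. Since $\theta_1 \equiv \theta_2 \pmod{(\pi^\ell)}$ and $\dim V_\repF(\theta_1) + \dim V_\repF(\theta_2) = \chi(1)$, this yields
\[ \chi(D) \equiv \chi(1)\theta_2 \pmod{(\pi^\ell)}. \]

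Next, apply the second orthogonality relation (equivalently, decompose the character of the regular representation): $\sum_{\chi \in \Irr(G)} \chi(1)\chi(h)$ equals $|G|$ when $h = 1$ and $0$ when $h \neq 1$. Combining these ingredients, for $h \neq 1$ one gets
\[ \Phi(h) \equiv k + \theta_2\Bigl(\sum_{\chi \in \Irr(G)}\chi(1)\chi(h) - 1\Bigr) \equiv k - \theta_2 \pmod{(\pi^\ell)}, \]
and for $h = 1$,
\[ \Phi(1) \equiv k + \theta_2(|G| - 1) \pmod{(\pi^\ell)}. \]
The "in particular" statement follows because both sides are rational integers and the contraction of $(\pi^\ell)$ to $\Z$ equals $p^\ell \Z_{(p)}$, so congruence modulo $(\pi^\ell)$ of integers is the same as congruence modulo $p^\ell$.

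There is no real obstacle here beyond bookkeeping; the substantive content is the collapse $\theta_1 \equiv \theta_2 \pmod{(\pi^\ell)}$, which is exactly what makes working in the residue ring $R = \frakR/(\pi^\ell)$ powerful. The hypothesis $p^\ell \nmid |C_G(h)|$ does not appear to be needed for the congruence itself; it is presumably included because it is the regime in which the congruence is subsequently inverted to extract nontrivial information about $|h^G \cap D|$ modulo a power of $p$.
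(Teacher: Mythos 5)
Your argument is correct and essentially identical to the paper's proof: both collapse $\chi(D)\equiv\chi(1)\theta_2\pmod{(\pi^\ell)}$ using $\theta_1\equiv\theta_2$, split off the principal character, and finish with column orthogonality to obtain $k-\theta_2$ (resp.\ $k+\theta_2(|G|-1)$), and your remark that the hypothesis $p^\ell\nmid|C_G(h)|$ is not used in deriving the congruence matches the paper, whose proof likewise never invokes it. The one caveat is your closing assertion that the contraction of $(\pi^\ell)$ to $\Z$ is $p^\ell\Z_{(p)}$, which can fail when $p$ ramifies in $\Q(\zeta)$; the paper's own final line is equally terse, and both arguments are repaired by noting that $\Phi(h)-(k-\theta_2)$ (resp.\ $\Phi(1)-k-\theta_2(|G|-1)$) equals $\sqrt\Delta$ times an algebraic integer that is forced to be rational, hence lies in $\sqrt\Delta\,\Z\subseteq p^\ell\Z$.
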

	\begin{proof}
	First, for a character $\chi$ afforded by a representation $\repF$, $\chi(1) = \dim V_\repF(\theta_1) + \dim V_\repF(\theta_2)$. Moreover, for $p^\ell$ a prime power dividing $\sqrt\Delta$, $\theta_1 \equiv \theta_2 \pmod{p^\ell}$. Thus, $\overline{\chi(D)} \equiv \dim V_\repF(\theta_1)\theta_1 + \dim V_\repF(\theta_2)\theta_2 \equiv (\dim V_\repF(\theta_1) + \dim V_\repF(\theta_2))\theta_2 \equiv \chi(1)\theta_2 \pmod{p^\ell}$. It follows that 
	
		\[\Phi \equiv k - \theta_2 + \sum_{\chi\in\Irr(G)}(\chi(1)\theta_2)\chi \pmod {(\pi^\ell)}. \]
	
	Then, when $h \neq 1$,
	
		\[\Phi(h) \equiv k - \theta_2 + \theta_2\sum_{\chi\in\Irr(G)}\chi(1)\chi(h) \equiv k - \theta_2 \pmod {(\pi^\ell)}\]
		
	by column orthogonality of the characters, and when $h = 1$, then
	
		\[\Phi(1) \equiv k - \theta_2 + \theta_2\sum_{\chi\in\Irr(G)}\chi(1)^2 \equiv k-\theta_2 + \theta_2|G| \equiv k + \theta_2(|G| - 1) \pmod {(\pi^\ell)}.\]
	
	The full equality is given by the definition $\Phi(h) = |h^G \cap D||C_G(h)|$. Finally, the last equations follow since $|h^G \cap D||C_G(h)|$ is an integer. 
	\end{proof}

Occasionally, Lemma \ref{char-p-phi} can be used to establish whether or not a PDS exists on a certain parameter set more or less by hand, occasionally aided by minor computer calculations.

\begin{ex}\label{No-DS}
There does not exist a reversible $(112, 75, 50)$-DS in a nonabelian group $G$ of order $112$. We suppose for contradiction that there does exist such a DS in a nonabelian group $G$ of order $112$. This parameter set has eigenvalues $\theta_1=5$ and $\theta_2=-5$. By Lemma \ref{eigenvalues}, if $D$ exists then $\overline{\chi(D)} \equiv 0 \pmod 5$. Therefore, by Lemma \ref{char-p-phi}, $\Phi(g) \equiv 75 - 5 + \sum_{\chi\in\Irr(G)}(\chi(1)\theta_2)\chi(g) \equiv 0 \pmod 5$. Finally, there is no centralizer $C_G(g)$ of order divisible by $5$, since $5$ does not divide $112$. Therefore, for all $g\in G$, $|g^G \cap D| \equiv 0 \pmod 5$.

	Consulting \GAP \cite{GAP4} on all possible nonabelian groups of order $112$, we may for a given group $G$ compute the largest multiple of $5$ less than or equal to the size of each conjugacy class of $G$. For example, if $G = $ SmallGroup(112, 41), then it has conjugacy class sizes $[ 1, 1, 8, 7, 8, 7, 8, 8, 8, 8, 8, 8, 8, 8, 8, 8 ]$, and the largest multiple of 5 for each conjugacy class is $[ 0, 0, 5, 5, 5, 5, 5, 5, 5, 5, 5, 5, 5, 5, 5, 5]$. In this case, $G$ cannot contain a reversible difference set, for Sum($[ 0, 0, 5, 5, 5, 5, 5, 5, 5, 5, 5, 5, 5, 5, 5, 5]) = 70 < k = 75$.
	
	An analogous computation for each group verifies that there are no nonabelian groups of order $112$ containing a regular $(112, 75, 50)$-DS. A direct verification may be done in \GAP \cite{GAP4}, using the following command:

		\begin{verbatim}
			gps := Filtered(AllSmallGroups(112), g -> not IsAbelian(g));;
			List(gps, g -> 5*Sum(List(ConjugacyClasses(g), i -> Int(Size(i)/5))));
		\end{verbatim}
	The output will be a list, and every entry of the list corresponds to the largest sum of elements in a possible PDS for a given group. Note that for every group, that sum is strictly less than $k = 75$.
	\end{ex}

In fact, we may upgrade this sort of technique to include PDSs more generally. For this, we first require the following Lemma.

\begin{lem}\label{k-mod-p}
Let $G$ contain a regular $(v, k, \lambda, \mu)$-PDS with eigenvalues $\theta_1 > \theta_2$ such that $p^\ell$ divides both $\theta_1$ and $\theta_2$, then $k \equiv 0 \pmod {p^\ell}$. In particular, $k \equiv 0 \pmod {\gcd(\theta_1, \theta_2)}$.
\end{lem}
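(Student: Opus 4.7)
The plan is to exploit the fact that the trace of the adjacency matrix of the $(v,k,\lambda,\mu)$-SRG associated to $D$ is zero, which packages the divisibility information in $\theta_1,\theta_2$ directly into a statement about $k$. Let $A$ be the adjacency matrix of $\Cay(G,D)$. Since $A$ has zero diagonal, $\Tr(A)=0$, and since the spectrum of $A$ consists of $k$ (with multiplicity $1$), $\theta_1$ (with multiplicity $m_1$), and $\theta_2$ (with multiplicity $m_2$), we obtain
\[ k + m_1 \theta_1 + m_2 \theta_2 = 0.\]
I would verify this identity either by citing the well-known trace relation for SRGs or by a direct calculation from the closed-form expressions for $m_1,m_2$ given in the introduction (the $(v-1)(\theta_1+\theta_2)/2$ term and the $(\theta_2-\theta_1)(2k + (v-1)(\lambda-\mu))/(2\sqrt{\Delta})$ term combine to give exactly $-k$ after using $\theta_1-\theta_2 = \sqrt{\Delta}$).

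Now suppose $p^\ell \mid \theta_1$ and $p^\ell \mid \theta_2$. Since $m_1,m_2$ are integers, $p^\ell$ divides the right-hand side of the trace identity, hence $p^\ell \mid k$. This establishes the first statement.

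For the ``in particular'' claim, write $g = \gcd(\theta_1,\theta_2)$ and factor $g = \prod_i p_i^{\ell_i}$ into its prime-power decomposition. For each $i$, the prime power $p_i^{\ell_i}$ divides both $\theta_1$ and $\theta_2$, so by what we just proved, $p_i^{\ell_i} \mid k$. Since the $p_i^{\ell_i}$ are pairwise coprime, their product $g$ divides $k$, as desired.

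I do not anticipate a genuine obstacle here: the content of the lemma is essentially just the trace identity for SRGs. The only minor point to be careful about is that the result really requires $p^\ell$ to divide \emph{both} eigenvalues (not just the discriminant $\sqrt{\Delta} = \theta_1 - \theta_2$); if $p^\ell$ divides only $\sqrt{\Delta}$, then the trace argument gives only $k \equiv -(m_1+m_2)\theta_2 \equiv -(v-1)\theta_2 \pmod{p^\ell}$, which is strictly weaker. The hypothesis that $p^\ell$ divides \emph{each} of $\theta_1,\theta_2$ is precisely what is needed to kill the right-hand side of the trace relation modulo $p^\ell$.
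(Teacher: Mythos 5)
Your proof is correct, but it takes a different route from the paper. The paper derives the lemma from its character-theoretic machinery: since $D$ is regular, $\Phi(1) = |C_G(1)||\{1\} \cap D| = 0$, and Lemma \ref{char-p-phi} (proved over the local ring $\frakR$ via column orthogonality) gives $\Phi(1) \equiv k - \theta_2 + \theta_2|G| \pmod{(\pi^\ell)}$; with $\theta_2 \equiv 0 \pmod{p^\ell}$ this collapses to $k \equiv 0$. You instead use the elementary trace identity for the associated SRG, $k + m_1\theta_1 + m_2\theta_2 = \Tr(A) = 0$, together with integrality of the multiplicities $m_1, m_2$, and your verification of that identity from the closed forms of $m_i$ is accurate (using $m_1 + m_2 = v-1$ and $\theta_1 - \theta_2 = \sqrt{\Delta}$ one indeed gets $m_1\theta_1 + m_2\theta_2 = -k$). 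The two arguments are conceptually cousins: the adjacency matrix is the image of $D$ under the right regular representation, and $\sum_\chi \chi(1)\chi(D)$ is exactly its trace, so the paper's computation of $\Phi(1)$ is the character-theoretic shadow of your trace relation. What your version buys is economy — no local rings, no Lemma \ref{char-p-phi}, and no appeal to $\theta_1 \equiv \theta_2 \pmod{p^\ell}$ beyond what the hypothesis already gives; what the paper's version buys is uniformity with the surrounding framework, since the same $\Phi$-congruence immediately yields the analogous statements at nonidentity classes (Lemma \ref{shared-prime-class-intersection}). Your handling of the ``in particular'' clause via the prime-power factorization of $\gcd(\theta_1,\theta_2)$ is fine, and your closing caveat — that divisibility of $\sqrt{\Delta}$ alone would only give the weaker congruence $k \equiv -(v-1)\theta_2 \pmod{p^\ell}$ — is a correct reading of where the hypothesis is used.
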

	\begin{proof}
	If $D$ is a regular PDS, then $\Phi(1) = 0$. Let $p^\ell$ divide $\theta_1, \theta_2$. Let $\frakR$ be the local ring corresponding to $p$ as before. Evaluating mod $(\pi^\ell)$, we have
	\[0 \equiv \Phi(1) \equiv k - \theta_2 + \theta_2\sum_{\chi\in\Irr(G)}\chi(1)\chi(1) \equiv k \pmod {(\pi^\ell)},\] by Lemma \ref{char-p-phi}. Since $k$ is an integer, $k \equiv 0 \pmod {p^\ell}$.
	\end{proof}

\begin{lem}\label{shared-prime-class-intersection}
Let $G$ be a group containing a regular $(v, k, \lambda, \mu)$-PDS $D$ with eigenvalues $\theta_1 > \theta_2$ such that $\gcd(\theta_1, \theta_2) \neq 1$. If $p^\ell$ divides $\gcd(\theta_1, \theta_2)$ and $p^\ell$ does not divide $v$, then for each conjugacy class $h^G$, \[|h^G \cap D| \equiv 0 \pmod{p^\ell}.\]
\end{lem}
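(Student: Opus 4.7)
The plan is to combine Lemma \ref{k-mod-p} with Lemma \ref{char-p-phi} to pin down $\Phi(h)$ modulo $p^\ell$ and then divide through by $|C_G(h)|$. This mirrors the strategy used in Example \ref{No-DS}, except that now both eigenvalues, not just $\theta_2$, are divisible by the prime power in question, which is what ultimately forces the right-hand side of the $\Phi$ congruence to vanish.

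First I would note that the hypothesis $p^\ell \mid \gcd(\theta_1,\theta_2)$ gives both $p^\ell \mid \theta_2$ and $p^\ell \mid \theta_1-\theta_2 = \sqrt\Delta$. Applying Lemma \ref{k-mod-p} at this prime power yields $k \equiv 0 \pmod{p^\ell}$, so that $k - \theta_2 \equiv 0 \pmod{p^\ell}$. Next, because $|C_G(h)| \mid v$ for every $h \in G$, the hypothesis $p^\ell \nmid v$ forces $p^\ell \nmid |C_G(h)|$, which is exactly the technical condition needed to invoke Lemma \ref{char-p-phi}. Combining these ingredients, for $h \neq 1$ we obtain
\[ |h^G \cap D|\cdot|C_G(h)| \;=\; \Phi(h) \;\equiv\; k - \theta_2 \;\equiv\; 0 \pmod{p^\ell}. \]
For $h = 1$ the claim is automatic, since $1 \notin D$ gives $|1^G \cap D| = 0$.

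The last step is to pass from divisibility of the product $|h^G \cap D|\cdot|C_G(h)|$ to divisibility of $|h^G \cap D|$ alone, and this is the main obstacle I anticipate. Using $|C_G(h)| \mid v$ together with $p^\ell \nmid v$ controls the $p$-part of $|C_G(h)|$, so one must carry out a careful $p$-adic valuation argument to transfer the full $p^\ell$-divisibility onto $|h^G \cap D|$ without losing any powers of $p$ to the centralizer order. Supplementary constraints forced by $p^\ell \mid \gcd(\theta_1,\theta_2)$ -- for example, the identity $v\mu = (k-\theta_1)(k-\theta_2)$ paired with $p^\ell \mid k$ shows $p^{2\ell} \mid v\mu$, which severely restricts $\nu_p(v)$ relative to $\nu_p(\mu)$ -- are the natural tools for making this transfer clean and completing the proof.
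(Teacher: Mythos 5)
Your opening steps coincide with the paper's own proof: Lemma \ref{k-mod-p} gives $p^\ell \mid k$, the divisibility $|C_G(h)| \mid v$ plus the hypothesis $p^\ell \nmid v$ lets you invoke Lemma \ref{char-p-phi}, and so $|C_G(h)|\,|h^G\cap D| = \Phi(h) \equiv k - \theta_2 \equiv 0 \pmod{p^\ell}$, with $h = 1$ trivial since $1 \notin D$. But you stop short of the one step that actually proves the lemma: passing from $p^\ell \mid |C_G(h)|\,|h^G\cap D|$ to $p^\ell \mid |h^G\cap D|$. Announcing this as ``the main obstacle'' and gesturing at a careful valuation argument is not a proof, and the specific tool you name does not close it: the identity $v\mu = (k-\theta_1)(k-\theta_2)$ only bounds the $p$-part of the product $v\mu$, and since $\mu - k = \theta_1\theta_2$ already forces $p^\ell \mid \mu$, the factor $\mu$ can absorb all of those powers of $p$; this gives no new information about $v$, let alone about the group-theoretic quantity $|C_G(h)|$.

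The paper finishes in one line: $|C_G(h)|$ divides $v$ and $p$ does not divide $|C_G(h)|$, so the centralizer order is a unit modulo $p^\ell$ and cancels. Your hesitation does expose a genuine imprecision in that step: the literal hypothesis ``$p^\ell \nmid v$'' only yields $p^\ell \nmid |C_G(h)|$, and if $\ell \ge 2$ while $p \mid v$ one can only conclude congruence to $0$ modulo a smaller power of $p$. What the paper's argument (and every application of the lemma, e.g.\ the $(183,112,66,72)$ example with $p = 2$ and $v = 183$) really uses is that $p$ itself does not divide $v$, in which case $\gcd(|C_G(h)|, p) = 1$ and the cancellation is immediate. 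So the repair is not a valuation computation with $v\mu$ but simply the coprimality of $|C_G(h)|$ with $p$; under the hypothesis $p \nmid v$ your argument terminates exactly as the paper's does, whereas as written your proposal leaves the decisive division step unproven.
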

	\begin{proof}
	By Lemma \ref{char-p-phi}, \[|C_G(h)||h^G\cap D| \equiv k - \theta_2 + \theta_2\sum_{\chi\in\Irr(G)}\chi(1)\chi(h) \equiv 0 \pmod {(\pi^\ell)},\] and by Lemma \ref{k-mod-p}, $k \equiv 0 \pmod{p^\ell}$. Since $p^\ell$ does not divide $v$, then $p$ does not divide $|C_G(h)|$, forcing $|h^G\cap D| \equiv 0 \pmod{p^\ell}$.
	\end{proof}
	
On certain parameter sets and for certain groups, Lemma \ref{shared-prime-class-intersection} in conjunction with Theorem \ref{value-phi(a)} allow one to compute very rapidly the structure of the class intersections of $G$ and a PDS $D$. We place here the following example to illustrate this point.

\begin{ex}
	We explore the structure of a $(183, 112, 66, 72)$-PDS $D$ in a group $G$ of order $183$. The eigenvalues of this PDS are $\theta_1 = 4$ and $\theta_2 = -10$. Then, $\gcd(4, -10) = 2$, but $2$ does not divide $183$. Therefore, by Lemma \ref{shared-prime-class-intersection}, for every $h \in G$, we have $|h^G \cap D| \equiv 0 \pmod 2$. By a computation in \GAP \cite{GAP4}, there is one nonabelian group given by SmallGroup(183, 1) with two conjugacy classes $h_1^G, h_2^G$ of order $61$, $20$ conjugacy classes of order $3$, and one conjugacy class of order $1$. If $x \in h_1^G, h_2^G$, then $x \not \in G'$ by a computation in \GAP \cite{GAP4}. Moreover, every prime $p$ dividing $183$ does not divide $14$, and $G$ must have a subgroup of linear characters, for it is solvable by Feit-Thompson. So, by Theorem \ref{value-phi(a)}, $|h_1^G \cap D|, |h_2^G \cap D| \in \{\frac{112-4}{3}, \frac{112+10}{3}\}$. Only $\frac{112-4}{3} = 36$ is integral, so $|h_1^G \cap D|, |h_2^G \cap D| = 36$. Therefore, $112-72 = 40$ elements must be partitioned among $20$ conjugacy classes. The maximum size of each conjugacy class is $3$, and each class must have $0 \pmod 2$ elements, so each of the remaining conjugacy classes contain $2$ elements of $D$.
\end{ex}

A PDS does indeed exist for this parameter set, which we found through a modified form of the Random Hill Climb search discussed in \cite{brady}. We will speak on this more extensively in the next section.

\subsection{An algorithmic application of Lemma \ref{char-p-phi}}

Combining Lemma \ref{char-p-phi} and Theorem \ref{value-phi(a)} lets us build a fast algorithm to solve for all values $|D\cap h^G|$ even in comparatively large groups of order up to $1000$. We briefly describe the concept of such an algorithm, and then provide the pseudocode for an implementation.

Suppose $G$ is a fixed nonabelian group with conjugacy class representatives $h_1, \dots, h_r$. We would like to search for a $(v, k, \lambda, \mu)$-PDS $D$ in $G$ with discriminant $\sqrt\Delta$. So, we suppose $D$ exists, and attempt to find the values $|h_i^G \cap D|$. If there exists a prime $p$ dividing $|\mathcal{L}|$ and not $\sqrt\Delta$, then for $h \in \{h_1, \dots, h_r\}$ such that $h \not \in G'$, we may directly compute $|h^G \cap D|$ as one of $\frac{k-\theta_1}{|C_G(h)|}$ or $\frac{k-\theta_2}{|C_G(h)|}$ by Theorem \ref{value-phi(a)}. Otherwise, if no such $p$ exists or $h \in G'$, we list the prime powers $p_1^{\ell_1}, \dots, p_m^{\ell_m}$ dividing $\sqrt\Delta$ and not dividing $|C_G(h)|$. If $h \neq 1$, we compute $x \equiv |C_G(h)|^{-1}(k - \theta_2) \pmod{p_i^{\ell_i}}$ for each $p_i^{\ell_i}$. By Lemma \ref{char-p-phi}, $x\equiv |h^G \cap D| \pmod{p_1^{\ell_1}\dots p_m^{\ell_m}}$.

We then generate every list of values $d_1, \dots, d_r$ so that $d_1 + \dots + d_r = k$, and either $d_i$ equals one of $\frac{k-\theta_1}{|C_G(h)|}$ or $\frac{k-\theta_2}{|C_G(h)|}$ if $h_i$ satisfies the necessary criteria, or $d_i \equiv |C_G(h)|^{-1}(k - \theta_2) \pmod{p_i^{\ell_i}}$ for each of the $p_i^{\ell_i}$ corresponding to that $h_i$. We end by checking if each list $d_1, \dots, d_r$ gives a valid $\Phi$ function according to the identification $\overline{\chi_j(D)} = d_1\overline{\chi_j(h_1)} + \dots d_r\overline{\chi_j(h_r)}$ for all $\chi_j \in \Irr(G)$.

We should mention as well the following fact, which we use as part of Algorithm \ref{CCI} to determine that certain lists of conjugacy class intersection sizes are invalid and is often successful in ruling out PDSs inside of even order groups.

\begin{lem}\label{order-2-action}
Let $G$ be a group containing a regular PDS $D$. If $h \neq h^{-1}$ and $h^{-1} \in h^G$, then $|h^G \cap D|$ is even.
\end{lem}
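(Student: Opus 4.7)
The plan is to exhibit a fixed-point-free involution on the set $h^G \cap D$, from which the parity conclusion follows immediately.

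First I would invoke the hypothesis that $D$ is regular, so $D = D^{(-1)}$; in particular, the map $\iota \colon x \mapsto x^{-1}$ sends $D$ into itself. Next, I would verify that $\iota$ also preserves $h^G$: since $h^{-1} \in h^G$ by hypothesis, for any $x = g^{-1}hg \in h^G$ we have $x^{-1} = g^{-1}h^{-1}g \in (h^{-1})^G = h^G$. Consequently, $\iota$ restricts to an involution on $h^G \cap D$.

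The crux is then to check that this involution has no fixed points. Suppose some $x \in h^G$ satisfied $x = x^{-1}$; writing $x = g^{-1}hg$, we would obtain $g^{-1}hg = g^{-1}h^{-1}g$, hence $h = h^{-1}$, contradicting the assumption $h \neq h^{-1}$. Therefore $\iota$ pairs up the elements of $h^G \cap D$ in orbits of size exactly $2$, so $|h^G \cap D|$ is even.

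There is no real obstacle here: the whole argument rests on noticing that the two hypotheses exactly say (i) the ambient set $D$ is inverse-closed and (ii) the conjugacy class $h^G$ is inverse-closed with $h$ not self-inverse, which together force a free $\Z/2\Z$-action on $h^G \cap D$. No character theory or modular arithmetic is required for this lemma; it is a purely combinatorial observation that will be combined with Lemma \ref{char-p-phi} and Theorem \ref{value-phi(a)} in Algorithm \ref{CCI} to eliminate candidate intersection profiles.
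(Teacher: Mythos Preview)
Your proof is correct and follows essentially the same approach as the paper: both argue that, since $D$ and $h^G$ are inverse-closed and no element of $h^G$ is self-inverse, the set $h^G \cap D$ decomposes into inverse pairs. Your version simply makes explicit the fixed-point-free involution that the paper's one-line proof leaves implicit.
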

	\begin{proof}
		Suppose for $h \in G$ that $h \neq h^{-1}$ but $h^{-1} \in h^G$. Since $D = D^{(-1)}$, we can partition $h^G \cap D$ into inverse pairs, and so $h^G \cap D$ must be even.
	\end{proof}
	
It is very easy to rule out a large number of even order groups using Lemma \ref{order-2-action}.

\begin{ex}
We show that there does not exist a $(100, 33, 14, 9)$-PDS inside of $G =$ SmallGroup(100, 3). By a computation in \GAP \cite{GAP4}, $G$ contains a conjugacy class $h^G$ of order $4$ such that $g, g^{-1} \in h^G$ for all $g\in h^G$. Moreover, the centralizer $C_G(h)$ has order $25$. Since the eigenvalues corresponding to $(100, 33, 14, 9)$ are $8$ and $-3$, by Lemma \ref{char-p-phi},
\[|h^G \cap D| \equiv (33 - 8)25^{-1} \equiv 1 \pmod{11}.\] The only nonnegative integer less than or equal to $4$ that is congruent to $1 \pmod{11}$ is $1$, so $|h^G \cap D| = 1$. Then, by Lemma \ref{order-2-action}, $G$ cannot contain a $(100, 33, 14, 9)$-PDS.
\end{ex}
    
 \begin{algorithm} 
    \caption{Construct Class Intersections}
    \label{CCI}
	\begin{algorithmic}[1]
		\Ensure $\theta_1, \theta_2$ are eigenvalues corresponding to $D$ so that $\theta_1 \neq k \neq \theta_2$
		\Ensure $h_1, \dots, h_r$ are conjugacy class representatives of $G$.

		\State $\sqrt\Delta \gets \theta_1 - \theta_2$
		\State DModIntersectionList $\gets$ empty length $r$ list
		\State phiLists $\gets$ empty list

		\For{$i = 1$ to $r$}
			\State $h \gets h_i$
			\State primeDivsDelta $\gets$ prime power divisors of $\sqrt\Delta$ coprime to $|C_G(h)|$
			\State DHModIntersection $\gets N$ such that $N \equiv |C_G(h)|^{-1}(k - \theta_2) \pmod{p^\ell}$ for each $p^\ell$ in primeDivsDelta
			
			\If{$h \not \in G'$}
				\State $C_1 \gets \frac{k-\theta_1}{|C_G(h)|}$
				\State $C_2 \gets \frac{k-\theta_2}{|C_G(h)|}$
	
				\For{$p^\ell$ in primeDivsDelta}
					\If{DHModIntersection $\pmod{p^\ell} \not \in \{C_1 \pmod{p^\ell}, C_2 \pmod{p^\ell}\}$}
						\State Terminate
					\EndIf
					
				\EndFor
		\algstore{CCI} 
	\end{algorithmic}
\end{algorithm}
\begin{algorithm}
	\begin{algorithmic} [1]
		\algrestore{CCI}
				\If{$C_1$ is not integral and $C_2$ is not integral}
						\State Terminate
				\ElsIf{$C_1$ is integral and $C_2$ is not integral}
					\State DModIntersectionList[i] $\gets [C_1, 0]$
				\ElsIf{$C_1$ is not integral and $C_2$ is integral}
					\State DModIntersectionList[i] $\gets [C_2, 0]$

				\Else
					\State DModIntersectionList[i] $\gets [C_2, \frac{\sqrt\Delta}{|C_G(h)|}]$
				\EndIf

				\Else
					\State DModIntersectionList[i] $\gets [N, \text{Product(primeDivsDelta)}]$
				\EndIf
			\EndFor

			\State Append every list of integers $d_1, \dots, d_r$ such that $d_i \equiv \text{DModIntersectionList[i][1]} \pmod{\text{DModIntersectionList[i][2]}}$ and Sum$(d_1, \dots, d_r) = k$ to phiLists
			\State Filter all $d_1, \dots, d_r$ from phiLists for which the corresponding $\Phi$ is invalid.
			\State Output phiLists
		\end{algorithmic}
\end{algorithm}

\FloatBarrier

The phrase ``Filter all $d_1, \dots, d_r$ from phiLists" is rather expansive, so we make the following comment. We first check that each $\chi(D)$ generated by the $d_1, \dots, d_r$ is a valid sum of eigenvalues of the corresponding possible PDS. We also verify the result against Lemma \ref{order-2-action} and against Theorem \ref{thm:Ncosetintersection}. We should note as well that the full implementation of Algorithm \ref{CCI} may also be found \href{https://github.com/srnelson1/PDS-class-intersections}{here}. Now, Algorithm \ref{CCI} runs quickly on most groups for which there is some prime dividing $\sqrt\Delta$ not dividing $v$, and it terminates especially rapidly in cases where the number of primes or the size of primes are relatively large. In fact, we were able to compute all values $|D\cap h^G|$ of all parameter sets for which $\gcd(v, \sqrt\Delta) = 1$ listed in Brouwer's tables \cite{brouwertables}. We have combined this algorithm with the Random Restart Hill Climb search given by Brady \cite{brady} as well as the linear optimization software Gurobi \cite{gurobi} to construct a number of partial difference sets.

\begin{rem}
 \label{rem:111}
 The existence of a set of conjugacy class intersection sizes that satisfies the modular constraints of Lemmas \ref{char-p-phi} and \ref{order-2-action} is not sufficient to guarantee the existence of a PDS. For example, Algorithm \ref{CCI} returns the list \[[ 0, 9, 1, 9, 1, 1, 1, 1, 1, 1, 1, 1, 1, 1, 1 ]\]
 as a potentially valid list of conjugacy class intersection sizes for a $(111,30,5,9)$-PDS in SmallGroup(111, 1). Note that a subset of this group with these intersection sizes will evaluate on irreducible characters to values admissible by Lemma \ref{eigenvalues}.  However, by turning the conjugacy class and PDS constraints into a linear program, we are able to rule out the existence of such a PDS using the linear optimization software Gurobi \cite{gurobi}, and hence there does not exist a $(111, 30 , 5, 9)$-PDS. 
 
\end{rem}

\subsection{PDSs constructed using Algorithm \ref{CCI}}

For a number of groups, we used this method to calculate all possible sizes of intersections of the conjugacy classes of the group with a possible PDS inside this group. We then ran a restricted search for PDSs satisfying the class intersection restrictions using a modified Random Restart Hill Climb search described in \cite{brady}. Using the technique from \cite{brady}, we were able to construct nineteen distinct PDSs in nonabelian groups, many of which were already explicitly recognized, such as PDSs arising from triangular graphs. We found three parameter sets, $(111, 44, 19, 16)$, $(305, 76, 27, 16)$, and $(981,140,43,16)$, which correspond to an infinite family of groups acting regularly on Steiner systems $\S(2, 4, p^d)$ for $p$ prime. Five others that we were able to construct computationally -- in SmallGroup(57, 1), SmallGroup(155, 1), SmallGroup(301, 1), SmallGroup(737, 1), and SmallGroup(1027, 1) -- have parameters corresponding to an infinite family of PDSs corresponding to Steiner triple systems. The infinite families on $\S(2, 4, p)$ and $\S(2, 3, p^d)$ correspond to block-regular Steiner $2$-design constructions given by Clapham in \cite{clapham} and Wilson in \cite{wilson}. Finally, we found a $(183,70,29,25)$-PDS in SmallGroup(183, 1) which is related to a $\S(2,5,61)$ Steiner system and, as it turns out, is also part of an infinite family. To the authors' knowledge, the PDSs arising from \cite{wilson} have not previously been recognized in the literature, and we are only aware of one paper \cite{PonomarenkoRyabov_2025} recognizing the constructions given by Clapham \cite{clapham} as giving rise to PDSs.

\begin{prop}\label{Clapham-PDS-family}
Let $p$ be a prime such that $p^d > 9$ and $p^d \equiv 7 \pmod{12}$. Then, there exists a $(p^d(p^d-1)/6, 3(p^d-3)/2, (p^d + 3)/2, 9)$-PDS in the nonabelian group $C_{p^d} \rtimes C_{(p^d-1)/6} \le (\GF(p^d), +) \rtimes (\GF(p^d)^\times, \cdot)$.
\end{prop}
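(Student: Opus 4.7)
The plan is to realize the claimed PDS as the connection set of a Cayley representation of the block graph of a block-regular Steiner triple system $\S(2,3,p^d)$, with the regular action coming from an affine subgroup of $\AGL(1,p^d)$. Once a regular group action on the vertices of a primitive $(v,k,\lambda,\mu)$-SRG is produced, the neighbour set of any fixed vertex is automatically a regular $(v,k,\lambda,\mu)$-PDS (one identifies vertices with group elements and uses the fact that the graph is undirected to obtain $D=D^{(-1)}$), so the entire task reduces to constructing the Steiner triple system together with a suitable regular group of blockwise automorphisms.

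The construction I would use is Clapham's \cite{clapham}. Let $\omega$ be a primitive element of $\GF(p^d)$ and put $t=(p^d-1)/6$; the hypothesis $p^d\equiv 7\pmod{12}$ (together with $p^d>9$) guarantees that $t$ is an integer and, crucially, that $-1=\omega^{(p^d-1)/2}$ is an odd power of $\omega^{t}$, so that the classical Bose-type base blocks
\[
B_i\;=\;\{0,\;\omega^{i},\;\omega^{i+2t}\}\qquad (i=0,1,\dots,t-1)
\]
together with all their additive translates form an $\S(2,3,p^d)$. Multiplication by $\omega^{6}$ permutes the base blocks by cycling the exponent $i$ modulo $t$, so the subgroup $H=\langle \omega^{6}\rangle$ of index $6$ in $\GF(p^d)^\times$ preserves the full block set. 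Consequently the semidirect product $G=(\GF(p^d),+)\rtimes H\cong C_{p^d}\rtimes C_{(p^d-1)/6}$ acts on the Steiner system. Because $|G|=p^d(p^d-1)/6$ equals the total number of blocks, transitivity on blocks plus a trivial block stabiliser would give a regular action. Transitivity is clear from the construction, and the block stabiliser is trivial precisely because under $p^d\equiv 7\pmod{12}$ no nontrivial element of $H$ fixes any $B_i$ setwise (an element $\omega^{6j}$ acting on $B_0$ as a set would force $\{\omega^{6j},\omega^{6j+2t}\}=\{\omega^{0},\omega^{2t}\}$, which yields $6j\equiv 0$ or $6j\equiv 2t\pmod{p^d-1}$, and the stronger congruence $p^d\equiv 7\pmod{12}$ rules out the second possibility by a parity argument on the exponents).

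Having obtained the regular action of $G$ on the blocks, I would finish as follows. Fix a base block $B_0$ and identify each block with the unique $g\in G$ sending $B_0$ to it. By the general parameters of Subsection \ref{subsec:famSRG}, the block graph of any $\S(2,3,p^d)$ is an SRG with parameters $(p^d(p^d-1)/6,\;3(p^d-3)/2,\;(p^d+3)/2,\;9)$, and the hypothesis $p^d>9$ guarantees primitivity (so $0<\mu<k$). Setting
\[
D\;=\;\{\,g\in G\setminus\{1\}\;:\;B_0\text{ and }B_0\!\cdot\! g\text{ share a point}\,\},
\]
the undirectedness of the graph yields $D=D^{(-1)}$, and the standard dictionary between regular group actions on SRGs and regular PDSs (equivalently, Lemma \ref{D-squared}) shows that $D$ is a regular $(p^d(p^d-1)/6,\;3(p^d-3)/2,\;(p^d+3)/2,\;9)$-PDS in $G$. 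Since the action of $H$ on $(\GF(p^d),+)$ by multiplication is nontrivial (as $p^d>7$), $G$ is genuinely nonabelian, as claimed.

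The main obstacle is the verification that the block stabiliser in $G$ is trivial, i.e.\ the sharp congruence $p^d\equiv 7\pmod{12}$ is used in full strength (the weaker $p^d\equiv 1\pmod 6$ is not enough: it gives a Steiner system but not necessarily a block-regular one for this particular group). Everything else in the argument is either a direct appeal to \cite{clapham} or to the general SRG/PDS machinery already set up earlier in the paper.
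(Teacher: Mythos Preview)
Your overall strategy matches the paper's proof exactly: invoke Clapham \cite{clapham} for a block-regular $\S(2,3,p^d)$ with regular group $G=C_{p^d}\rtimes C_{(p^d-1)/6}$, then pass to the block graph (Subsection~\ref{subsec:famSRG}) and use the standard SRG/PDS dictionary. The paper's own proof consists of precisely those two citations and nothing more.

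That said, the explicit reconstruction you offer of Clapham's construction contains errors you should fix or omit. The base blocks $B_i=\{0,\omega^i,\omega^{i+2t}\}$ do \emph{not} form a $(p^d,3,1)$ difference family: setting $x=\omega^t$ (a primitive sixth root of unity, so $x^2-x+1=0$), covering the missing residues forces $\omega^{2t}-1\in\{\omega^t,\omega^{4t}\}=\{x,-x\}$, whereas $\omega^{2t}-1=x^2-1=x-2$, which equals $x$ only in characteristic $2$ and equals $-x$ only if $x=1$. Your account of where $p^d\equiv 7\pmod{12}$ enters is also off: the pair of congruences $6j\equiv 2t$ and $6j+2t\equiv 0\pmod{6t}$ forces $4t\equiv 0\pmod{6t}$, impossible for every $t>0$, so no parity hypothesis is needed at that step. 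Finally, you only tested stabiliser elements of the form $(0,h)$; the genuine obstruction to block-regularity involves elements $(a,h)$ with $a\neq 0$ (for example $h=-1=\omega^{3t}$ lies in $H=\langle\omega^6\rangle$ exactly when $t$ is even), and it is in that fuller computation that the condition $p^d\equiv 7\pmod{12}$ (equivalently $t$ odd) is actually used. Since you are ultimately relying on Clapham, none of this is fatal to the argument, but the sketch as written does not establish what it claims.
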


\begin{proof}
 By the results of Clapham \cite[Section 4]{clapham}, if $p^d \equiv 7 \pmod {12}$, there exists a Steiner triple system on $p^d$ elements with block-regular automorphism group isomorphic to $C_{p^d} \rtimes C_{(p^d-1)/6} \le (\GF(p^d), +) \rtimes (\GF(p^d)^\times, \cdot)$. By the discussion in Subsection \ref{subsec:famSRG}, this will correspond to a PDS in the nonabelian group $C_{p^d} \rtimes C_{(p^d-1)/6}$.
\end{proof}

\begin{rem}
 Clapham enumerated the number of isomorphism classes of such block-regular Steiner triple systems for each prime power $p^d \equiv 7 \pmod {12}$ (see \cite[Theorem 4.7]{clapham}). It is well known that two Steiner triple systems are isomorphic if and only if their block-intersection graphs are isomorphic (see, e.g., \cite{Colbourn_Rosa_1999, Pike_1999}), so the nonisomorphic Steiner triple systems will yield nonisomorphic SRGs.
\end{rem}

In fact, Wilson \cite{wilson} provided a construction of Steiner 2-designs with a prime power order $p^d$ number of elements (provided that $p^d$ is sufficiently large in relation to the size $k$ of the blocks), which to the authors' knowledge has not been mentioned in the literature as giving rise to PDSs.

\begin{thm}\label{WilsonBuratti-PDS-family}
Let $p$ be a prime such that $p^d > (\frac12k(k-1))^{k(k-1)}$. If $p^d \equiv k(k-1) + 1 \pmod{2k(k-1)}$, then there exists a $(p^d(p^d - 1)/(k(k-1)), k(p^d-k)/(k-1), (p^d -1)/(k-1) + (k-1)^2 -2, k^2)$-PDS in the nonabelian group $C_{p^d} \rtimes C_{(p^d-1)/(k(k-1))} \le (\GF(p^d), +) \rtimes (\GF(p^d)^\times, \cdot)$.
\end{thm}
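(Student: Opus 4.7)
The plan is to mirror the strategy of Proposition \ref{Clapham-PDS-family}, but invoking Wilson's construction of Steiner $2$-designs in place of Clapham's. First, I would apply Wilson's theorem \cite{wilson}, which asserts that whenever $p^d$ is sufficiently large (specifically $p^d > (\tfrac12 k(k-1))^{k(k-1)}$) and $p^d \equiv k(k-1)+1 \pmod{2k(k-1)}$, there exists a $(k, 1)$-difference family of base blocks in $(\GF(p^d),+)$ whose translates yield the blocks of an $\S(2,k,p^d)$. The congruence ensures $k(k-1) \mid p^d - 1$, so the cyclic subgroup $C_{(p^d-1)/(k(k-1))}$ of $(\GF(p^d)^\times,\cdot)$ is well-defined, and Wilson's construction is built precisely so that this multiplier subgroup stabilizes the set of base blocks. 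Thus the semidirect product $G \colonequals C_{p^d} \rtimes C_{(p^d-1)/(k(k-1))}$ acts as a group of automorphisms of the design.

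The second step is to verify block-regularity. The number of blocks of an $\S(2,k,p^d)$ is $p^d(p^d-1)/(k(k-1))$, which equals $|G|$. Since the additive translates of the base blocks exhaust all blocks and the multiplier group permutes base blocks freely (by Wilson's construction), $G$ acts transitively on the block set; equal orders then force regularity. Finally, by the discussion in Subsection \ref{subsec:famSRG} identifying block-regular automorphism groups of Steiner $2$-designs with regular PDSs in the corresponding SRG, we obtain a regular PDS in $G$ with parameters
\[ \left(\frac{p^d(p^d-1)}{k(k-1)},\ \frac{k(p^d-k)}{k-1},\ \frac{p^d-1}{k-1}+(k-1)^2-2,\ k^2\right), \]
exactly as claimed. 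Since $(p^d-1)/(k(k-1)) > 1$ for $p^d$ in the stated range and the action of the multiplier on $C_{p^d}$ is by a nontrivial field automorphism, $G$ is genuinely nonabelian.

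The main obstacle, and the reason the statement needs the lower bound $p^d > (\tfrac12 k(k-1))^{k(k-1)}$, is in invoking Wilson's asymptotic existence theorem for the base block family; the rest is essentially a translation between the combinatorial and algebraic languages. One subtle point worth double-checking is that the $\S(2,k,p^d)$ produced by Wilson's construction really has $G$ as a full block-regular subgroup of automorphisms, not merely a vertex-transitive one — but this follows automatically from the count $|G| = (\text{number of blocks})$ combined with transitivity, so no additional design-theoretic input is needed beyond Wilson's construction itself.
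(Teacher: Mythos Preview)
Your proposal is correct and follows essentially the same route as the paper's own proof: invoke Wilson's result \cite{wilson} to produce a base block whose multiplicative translates give an $\S(2,k,p^d)$, observe that $G = C_{p^d} \rtimes C_{(p^d-1)/(k(k-1))}$ acts block-regularly (your transitivity-plus-counting argument is in fact slightly more explicit than the paper's ``it is straightforward to see''), and then apply the correspondence in Subsection \ref{subsec:famSRG}. One minor terminological slip: the multiplier subgroup acts on $C_{p^d}$ by field \emph{multiplication}, not by a field automorphism, but this does not affect the argument.
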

	\begin{proof}
	Let $\alpha$ be the generator for $(\GF(p^d)^\times, \cdot)$, and let $H = \langle \alpha^{(p^d-1)/k(k-1)} \rangle$. Let $B$ be a $k$ subset of $(\GF(p^d)^\times, \cdot)$, and set $\mathscr{B} = \{ Bg : g \in H\}$. Consider the set of nonzero differences $\delta B = \{b_i - b_j : b_i, b_j \in B, b_i \neq b_j\}$ evaluated over the field $\GF(p^d)$. If $|\delta B \cap Hg|$ is constant for each coset $Hg$ in $(\GF(p^d)^\times, \cdot)$, then by \cite[Theorem 2]{wilson}, $\mathscr{B}$ forms the set of base blocks (defined in \ref{subsec:famSRG}) for an $\S(2, k, p^d)$. By \cite[Corollary 1]{wilson}, there is always some set $B$ such that $\delta B$ is evenly distributed over the cosets $Hg$ whenever $p^d > (\frac12k(k-1))^{k(k-1)}$.
	
	It is straightforward to see that this construction provides a block regular construction for an $\S(2, k, p^d)$, for each block is of the form $h + Bg$ for $h \in (\GF(p^d), +)$ and $g \in H$. Thus, by the discussion in Subsection \ref{subsec:famSRG}, these constructions correspond to a PDS in the nonabelian group $C_{p^d} \rtimes C_{(p^d-1)/(k(k-1))}$, which has parameters $(p^d(p^d - 1)/(k(k-1)), k(p^d-k)/(k-1), (p^d -1)/(k-1) + (k-1)^2 -2, k^2)$.
	\end{proof}

When $p^d \le (\frac12k(k-1))^{k(k-1)}$, Buratti \cite{buratti} provides sufficient conditions for block regular $\S(2, k, p^d)$; we mention one result here, which implies the existence of the $(183, 70, 29, 25)$-PDS we found in SmallGroup(183, 1) related to a block regular $\S(2, 5, 61)$.

\begin{prop}
\label{prop:buratti}
Let $p^d = 20t + 1$ be a prime power, let $2^e$ be the highest power of $2$ in $t$, and suppose
\[ \frac{1}{2}(11 + 5 \sqrt{5}) \notin \{x^{2^{e+1}}: x \in GF(p^d)^\times \}.\] Then, there exists a $(p^d(p^d-1)/20, 5(p^d - 5)/4, (p^d-1)/4 + 14, 25)$-PDS in the nonabelian group $C_{p^d} \rtimes C_{(p^d-1)/20} \le (\GF(p^d), +) \rtimes (\GF(p^d)^\times, \cdot)$.
\end{prop}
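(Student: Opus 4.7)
The plan is to proceed in parallel with the proof of Theorem \ref{WilsonBuratti-PDS-family}, substituting Buratti's sharper arithmetic condition (valid for smaller $p^d$) for the asymptotic bound from Wilson. Let $\alpha$ generate $(\GF(p^d)^\times, \cdot)$, and set $H = \langle \alpha^{(p^d-1)/20} \rangle$, the unique subgroup of order $20$ in $\GF(p^d)^\times$. The aim is to produce a $5$-subset $B \subset \GF(p^d)$ whose set of nonzero differences $\delta B = \{b_i - b_j : b_i, b_j \in B,\ b_i \neq b_j\}$ is equidistributed among the cosets of $H$; by \cite[Theorem 2]{wilson} this yields $\mathscr{B} = \{Bg : g \in H\}$ as a set of base blocks for an $\S(2, 5, p^d)$.

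Buratti's construction in \cite{buratti} provides such a $B$ explicitly (built from certain square roots of $5$ and roots of unity in $\GF(p^d)$) precisely when a certain ``golden ratio type'' element fails to be a high enough power; translated into our notation, this is exactly the hypothesis that $\tfrac{1}{2}(11 + 5\sqrt{5})$ is not a $2^{e+1}$-th power in $\GF(p^d)^\times$, where $2^e$ is the largest power of $2$ dividing $t = (p^d - 1)/20$. Applying Buratti's theorem under this hypothesis produces the required base block $B$, and hence the Steiner system $\S(2, 5, p^d)$.

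By construction, every block has the form $h + Bg$ for some $(h, g) \in (\GF(p^d), +) \times H$, so the nonabelian group
\[ G \;=\; C_{p^d} \rtimes C_{(p^d-1)/20} \;\leq\; (\GF(p^d), +) \rtimes (\GF(p^d)^\times, \cdot) \]
acts regularly on the block set by affine transformations. Via the identification in Subsection \ref{subsec:famSRG}, the block-intersection graph of this design is a strongly regular graph with parameters
\[ \left( \frac{p^d(p^d - 1)}{20},\ \frac{5(p^d - 5)}{4},\ \frac{p^d - 1}{4} + 14,\ 25 \right), \]
and the regular action of $G$ on its vertex set delivers a PDS in $G$ with these parameters, as desired.

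The only genuine obstacle is bookkeeping: one must locate the precise statement in Buratti's paper \cite{buratti} producing an equidistributed base block of size $5$ under the given hypothesis, and verify that the arithmetic condition ``$\tfrac{1}{2}(11 + 5\sqrt{5})$ is not a $2^{e+1}$-th power'' matches the hypothesis appearing there (this element encodes when the relevant constructed block in fact lies in a full orbit and avoids a degeneration). Once Buratti's result is cited correctly, the rest of the proof is a direct transcription of the argument used for Theorem \ref{WilsonBuratti-PDS-family}, specialized to $k = 5$.
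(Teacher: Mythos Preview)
Your proposal is correct and follows essentially the same route as the paper: invoke Buratti's result to obtain a $5$-element base block $B$ whose difference set is evenly distributed over the cosets of $H$, apply \cite[Theorem 2]{wilson} to obtain a block-regular $\S(2,5,p^d)$, and then read off the PDS from Subsection \ref{subsec:famSRG}. The paper's proof is just a terser version of what you wrote, with the one concrete addition that it pins down the citation as \cite[Theorem 7]{buratti}; your closing paragraph about ``bookkeeping'' is exactly right, and once you supply that reference the argument is complete.
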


\begin{proof}
Let $\alpha$ generate the multiplicative group $(\GF(p^d)^\times, \cdot)$. By \cite[Theorem 7]{buratti}, there always exists a block $B$ whose differences form a system of distinct representatives for the cosets of the group $H = \langle \alpha^{\frac{p-1}{20}} \rangle$. By \cite[Theorem 2]{wilson}, the set $\mathscr{B} = \{ Bg : g \in H\}$ forms the set of base blocks for a block regular $\S(2, 5, p)$. By the discussion in Subsection \ref{subsec:famSRG}, this gives a $(p^d(p^d-1)/20, 5(p^d - 5)/4, (p^d-1)/4 + 14, 25)$-PDS.
\end{proof}

In our own constructions, we have encountered multiple examples of block regular $\S(2, 4, p)$ for $p \le 6^{12}$. In fact, Fuji-Hara et al. \cite{fuji-miao-shinohara} have shown via computer search that for all $p \le 6^{12}$, so long as $p \neq 13$, there does exist a block $B$ satisfying the conditions in \cite[Theorem 2]{wilson} for a block regular $\S(2, 4, p)$. Thus, we have the following proposition.

\begin{prop}\label{Fuji-PDS-family}
If $p \equiv 13 \pmod{2k(k-1)}$ and $p \neq 13$, then there exists a $(p(p - 1)/12, 4(p-4)/3, (p + 20)/3, 16)$-PDS inside the nonabelian group $C_{p} \rtimes C_{(p-1)/12} \le (\GF(p), +) \rtimes (\GF(p)^\times, \cdot)$.
\end{prop}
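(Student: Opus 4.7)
The plan is to reduce the claim to existence of a suitable base block $B \subseteq \GF(p)^\times$ and then apply the block-regular Steiner $2$-design machinery already assembled in the proofs of Theorem \ref{WilsonBuratti-PDS-family} and Proposition \ref{prop:buratti}. Specifically, I would fix $k = 4$ so that the congruence in the hypothesis reads $p \equiv 13 \pmod{24}$, pick a generator $\alpha$ of $(\GF(p)^\times, \cdot)$, and set $H = \langle \alpha^{(p-1)/12} \rangle$, the unique subgroup of index $12$. By \cite[Theorem 2]{wilson}, it suffices to exhibit a $4$-subset $B \subseteq \GF(p)^\times$ whose difference set $\delta B = \{b_i - b_j : b_i, b_j \in B, b_i \neq b_j\}$ meets each coset of $H$ in $\GF(p)^\times$ in the same number of elements: then $\mathscr{B} = \{Bg : g \in H\}$ is the set of base blocks of a block-regular $\S(2,4,p)$ with block-regular automorphism group isomorphic to $C_p \rtimes C_{(p-1)/12}$, and the discussion in Subsection \ref{subsec:famSRG} converts this to a PDS with the claimed parameters in the stated nonabelian group.

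With that reduction in hand, the proof splits into two ranges. For $p > 6^{12}$ with $p \equiv 13 \pmod{24}$, the existence of such a $B$ is supplied directly by Theorem \ref{WilsonBuratti-PDS-family} specialized to $k = 4$, $d = 1$, since then $p > (\tfrac{1}{2}k(k-1))^{k(k-1)} = 6^{12}$ and the congruence matches $k(k-1)+1 = 13 \pmod{2k(k-1)} = 24$. For the remaining primes $13 < p \le 6^{12}$ with $p \equiv 13 \pmod{24}$, I would invoke the exhaustive computer search of Fuji-Hara, Miao, and Shinohara \cite{fuji-miao-shinohara}, who verified that a block $B$ satisfying Wilson's condition exists for every such $p$ in that finite range, with $p = 13$ as the sole exception. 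Together these two subcases exhaust all primes allowed by the hypothesis.

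There is no substantial analytic obstacle here; the work is essentially an interpolation between an asymptotic existence theorem and a finite computational verification. The one point that genuinely requires care is the exclusion of $p = 13$: I would record explicitly that no $4$-subset $B \subseteq \GF(13)^\times$ can have its six differences hit the two nontrivial cosets of $H = \{1\}$ evenly (indeed, for $p = 13$ the index-$12$ subgroup is trivial so the distribution condition collapses and the construction degenerates), which is why $p = 13$ must be omitted from the statement. Once that caveat is noted, the proposition follows immediately by concatenating the two ranges and appealing to the block-regular SRG-to-PDS correspondence of Subsection \ref{subsec:famSRG}.
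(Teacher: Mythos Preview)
Your approach is essentially the paper's: reduce to the existence of a base block $B$ via \cite[Theorem 2]{wilson}, supply $B$ by appealing to Fuji-Hara--Miao--Shinohara (the paper cites their Theorem 3.9 as a single unified reference rather than splitting into the Wilson asymptotic range $p > 6^{12}$ and the finite computer search as you do, but the content is the same), and convert the resulting block-regular $\S(2,4,p)$ to a PDS via Subsection \ref{subsec:famSRG}. Two small slips worth fixing: $\langle \alpha^{(p-1)/12} \rangle$ has \emph{order} $12$, not index $12$, and your account of why $p=13$ degenerates is muddled (for $p=13$ the factor $C_{(p-1)/12}$ is trivial and the associated SRG is complete), though neither affects the argument since the exclusion of $p=13$ is already built into the cited result.
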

	\begin{proof}
	Let $\alpha$ generate the multiplicative group $(\GF(p)^\times, \cdot)$. By \cite[Theorem 3.9]{fuji-miao-shinohara}, there always exists a block $B$ whose differences form a system of distinct representatives for the cosets of the group $H = \langle \alpha^{\frac{p-1}{12}} \rangle$. By \cite[Theorem 2]{wilson}, the set $\mathscr{B} = \{ Bg : g \in H\}$ forms the set of base blocks for a block regular $\S(2, 4, p)$. By the discussion in Subsection \ref{subsec:famSRG}, this gives a $(p(p - 1)/12, 4(p-4)/3, (p + 20)/3, 16)$-PDS.
	\end{proof}

\subsection{Non-modular techniques}\label{Non-modular-techniques}
The modular techniques are useful for a number of groups and parameter sets. Nevertheless, there exists a large class of parameters $(v, k, \lambda, \mu)$ for which every prime dividing $\sqrt\Delta$ divides $v$. (For example, every PDS in an abelian group has such parameters.) In these cases, we frequently cannot guarantee that there is a single prime dividing $\sqrt\Delta$ which does not divide $|C_G(h)|$ for most of the elements $h \in G$. As a result, we may only compute the modular values of some of the intersections $|h^G \cap D|$, and it can be quite computationally intensive to guess and check all possibilities for the remaining $|h^G \cap D|$.

Here we provide an example computation, which shows that even if $\sqrt\Delta$ divides $v$, it is still possible to rule out the existence of a PDS for a particular group $G$ just through analyses of the character table of $G$ and the possible corresponding $\Phi$.   Although the modular techniques of this paper do not apply in this setting, the results of Subsection \ref{subsec:ott1} still do.  Note that we are mandating that the difference sets in the following example are reversible; by doing so, we do not need to analyze concrete representations of our group $G$ such as in, e.g., \cite{Liebler, Smith}.

\begin{ex}\label{no-Hadamard-DS}
	We will prove that there is no reversible $(144, 66, 30)$-DS $D$ in SmallGroup(144, 68). In this case, $\sqrt\Delta = 12$, where $12^2 = 144$. Therefore, Theorem \ref{linear-invariance}, Theorem \ref{value-phi(a)}, Theorem \ref{mod-restriction}, and Algorithm \ref{CCI} are all inapplicable.
	
	 Examining the character table for $G$ (which can be generated, for example, using \GAP \cite{GAP4}), there are six conjugacy classes of noncentral elements of order $3$. Each of these conjugacy classes has size $16$, so each element in one of these classes has a centralizer of order $9$. Thus, for any noncentral element $x$ of order $3$ in $G$, we have
		\[ \Phi(x) = 9 |x^G \cap D| \equiv 0 \pmod 9.\]
	Moreover, if $\mathcal{L}$ denotes the set of linear characters for $G$, then $\chi \notin \mathcal{L}$ implies $\chi(x) = 0$. Note that $|\mathcal{L}| = 9$, and each nonprincipal character has order $3$; other than the principal character, there are four conjugate pairs $\{\xi_i, \overline{\xi_i}\},$ $1 \le i \le 4$. Extracting the principal character from this sum and noting that $D$ is reversible (so $\overline{\chi(D)} = \chi(D)$ by Lemma \ref{lem:sumPhi}(i)), we have
		\[ \Phi(x) = 66 + \sum_{i = 1}^4 \xi_i(D) (\xi_i(x) + \overline{\xi_i}(x)).\]
	Since each $\xi_i(D)$ is $\pm 6$ by Lemma \ref{eigenvalues}, there are only $2^4 = 16$ possibilities in total. Checking which have $\Phi(x)$ divisible by $9$, there are only $5$ possibilities that work. (In each case, $\Phi(x) = 36$ or $\Phi(x) = 72$.)

	Of the $15$ remaining nonlinear characters, there is one that equals its conjugate and seven conjugate pairs. Let $\chi$ be a nonlinear irreducible character. Note that $\theta_1 = 6$ and $\theta_2 = -6$. By Lemma \ref{eigenvalues}, if $\repF$ is the representation affording $\chi$, then $\chi(D) = 6 (\dim V_\repF(6) - \dim V_\repF(-6))$, where $-\chi(1) \le (\dim V_\repF(6) - \dim V_\repF(-6)) \le \chi(1)$. So, we may conclude that $\chi(D) \in \{-18, -6, 6, 18\}$.

	Thus, there are $4^8$ possibilities for the coefficients $\chi(D)$ of $\Phi$ ($\chi$ nonlinear), and, from what we have ascertained about the linear characters, there are $5\cdot 4^8$ possibilities for $\Phi$.

	Since either $1 \in D$ or $1 \notin D$, we have $\Phi(1) \in \{0, 144\}$. Of the $5 \cdot 4^8$ possibilities for $\Phi$, a GAP \cite{GAP4} calculation shows that only $15820$ of these possibilities have $\Phi(1) \in \{0, 144\}$.

	Now, $Z(G) = \langle z \rangle$ is cyclic of order $3$. Again, since $D$ is reversible, either $\{z, z^{-1}\} \subseteq D$ or $\{z, z^{-1}\} \cap D = \varnothing$. Thus, either
		\[ \Phi(z) = \Phi(z^{-1}) = 0\]
	or
		\[ \Phi(z) = \Phi(z^{-1}) = 144.\]
	None of the remaining possibilities for $\Phi$ satisfy either of these constraints (in fact, all the values of these class functions are $6 \pmod {12}$ when evaluated at $z$), and so no such $\Phi$ can exist. Therefore, there is no reversible $(144,66,30)$-DS (and hence no regular $(144,66,30,30)$-PDS) in $G$.
	\end{ex}


\subsection*{Acknowledgements}
This work was funded by the Cissy Patterson Fund at William \& Mary. The authors acknowledge William \& Mary Research Computing for providing computational resources and/or technical support that have contributed to the results reported within this paper. URL: \url{https://www.wm.edu/it/rc}

\bibliographystyle{plainurl}
\bibliography{references}

\begin{thebibliography}{10}

\bibitem{benson}
C.~T. Benson.
\newblock On the structure of generalized quadrangles.
\newblock {\em J. Algebra}, 15:443--454, 1970.
\newblock \href {https://doi.org/10.1016/0021-8693(70)90049-9} {\path{doi:10.1016/0021-8693(70)90049-9}}.

\bibitem{brady}
Andrew~C. Brady.
\newblock New strongly regular graphs found via local search for partial difference sets.
\newblock {\em Electron. J. Combin.}, 32(1):Paper No. 1.24, 8, 2025.
\newblock \href {https://doi.org/10.37236/13296} {\path{doi:10.37236/13296}}.

\bibitem{brouwertables}
Andries~E. Brouwer.
\newblock Parameters of strongly regular graphs, 2025.
\newblock URL: \url{https://aeb.win.tue.nl/graphs/srg/srgtab.html}.

\bibitem{brouwer-maldeghem}
Andries~E. Brouwer and H.~Van~Maldeghem.
\newblock {\em Strongly regular graphs}, volume 182 of {\em Encyclopedia of Mathematics and its Applications}.
\newblock Cambridge University Press, Cambridge, 2022.
\newblock \href {https://doi.org/10.1017/9781009057226} {\path{doi:10.1017/9781009057226}}.

\bibitem{buratti}
Marco Buratti.
\newblock Constructions of {$(q,k,1)$} difference families with {$q$} a prime power and {$k=4,5$}.
\newblock volume 138, pages 169--175. 1995.
\newblock 14th British Combinatorial Conference (Keele, 1993).
\newblock \href {https://doi.org/10.1016/0012-365X(94)00198-R} {\path{doi:10.1016/0012-365X(94)00198-R}}.

\bibitem{clapham}
P.~C. Clapham.
\newblock Steiner triple systems with block-transitive automorphism groups.
\newblock {\em Discrete Math.}, 14(2):121--131, 1976.
\newblock \href {https://doi.org/10.1016/0012-365X(76)90055-8} {\path{doi:10.1016/0012-365X(76)90055-8}}.

\bibitem{handbook-comb-des}
Charles~J. Colbourn and Jeffrey~H. Dinitz, editors.
\newblock {\em Handbook of combinatorial designs}.
\newblock Discrete Mathematics and its Applications (Boca Raton). Chapman \& Hall/CRC, Boca Raton, FL, second edition, 2007.

\bibitem{Colbourn_Rosa_1999}
Charles~J. Colbourn and Alexander Rosa.
\newblock {\em Triple systems}.
\newblock Oxford Mathematical Monographs. The Clarendon Press, Oxford University Press, New York, 1999.

\bibitem{Davis_etal_2023}
James~A. Davis, John Polhill, Ken Smith, and Eric Swartz.
\newblock Nonabelian partial difference sets constructed using abelian techniques.
\newblock {\em Algebr. Comb.}, 8(2):399--419, 2025.
\newblock \href {https://doi.org/10.5802/alco.416} {\path{doi:10.5802/alco.416}}.

\bibitem{dewinter-kamischke-wang}
Stefaan De~Winter, Ellen Kamischke, and Zeying Wang.
\newblock Automorphisms of strongly regular graphs with applications to partial difference sets.
\newblock {\em Des. Codes Cryptography}, 79(3):471--485, 2016.
\newblock \href {https://doi.org/10.1007/s10623-015-0109-z} {\path{doi:10.1007/s10623-015-0109-z}}.

\bibitem{DummitFoote_2004}
David~S. Dummit and Richard~M. Foote.
\newblock {\em Abstract algebra}.
\newblock Chichester: Wiley, 3rd ed. edition, 2004.

\bibitem{feit}
Walter Feit.
\newblock {\em The representation theory of finite groups}, volume~25 of {\em North-Holland Mathematical Library}.
\newblock North-Holland Publishing Co., Amsterdam-New York, 1982.

\bibitem{Feng_He_Chen_2020}
Tao Feng, Zhiwen He, and Yu~Qing Chen.
\newblock Partial difference sets and amorphic {C}ayley schemes in non-abelian 2-groups.
\newblock {\em J. Combin. Des.}, 28(4):273--293, 2020.
\newblock \href {https://doi.org/10.1002/jcd.21695} {\path{doi:10.1002/jcd.21695}}.

\bibitem{fuji-miao-shinohara}
R.~Fuji-Hara, Y.~Miao, and S.~Shinohara.
\newblock Complete sets of disjoint difference families and their applications.
\newblock volume 106, pages 87--103. 2002.
\newblock Experimental design and related combinatorics.
\newblock \href {https://doi.org/10.1016/S0378-3758(02)00205-7} {\path{doi:10.1016/S0378-3758(02)00205-7}}.

\bibitem{GAP4}
The GAP~Group.
\newblock {\em {GAP -- Groups, Algorithms, and Programming, Version 4.13.1}}, 2024.
\newblock URL: \url{{https://www.gap-system.org}}.

\bibitem{godsil-royle}
Chris Godsil and Gordon Royle.
\newblock {\em Algebraic graph theory}, volume 207 of {\em Grad. Texts Math.}
\newblock New York, NY: Springer, 2001.

\bibitem{goldschmidt}
David~M. Goldschmidt.
\newblock {\em Lectures on character theory}, volume~8 of {\em Math. Lect. Ser.}
\newblock Publish or Perish, Inc., Wilmington, 1980.

\bibitem{gorenstein}
Daniel Gorenstein.
\newblock {\em Finite groups}.
\newblock Chelsea Publishing Co., New York, second edition, 1980.

\bibitem{gurobi}
{Gurobi Optimization, LLC}.
\newblock {Gurobi Optimizer Reference Manual}, 2024.
\newblock URL: \url{https://www.gurobi.com}.

\bibitem{isaacs}
I.~Martin Isaacs.
\newblock {\em Character theory of finite groups}.
\newblock AMS Chelsea Publishing, Providence, RI, 2006.
\newblock Corrected reprint of the 1976 original [Academic Press, New York; MR0460423].
\newblock \href {https://doi.org/10.1090/chel/359} {\path{doi:10.1090/chel/359}}.

\bibitem{Kantor_1986}
William~M. Kantor.
\newblock Generalized polygons, {SCAB}s and {GAB}s.
\newblock In {\em Buildings and the geometry of diagrams ({C}omo, 1984)}, volume 1181 of {\em Lecture Notes in Math.}, pages 79--158. Springer, Berlin, 1986.
\newblock \href {https://doi.org/10.1007/BFb0075513} {\path{doi:10.1007/BFb0075513}}.

\bibitem{lang}
Serge Lang.
\newblock {\em Algebra}, volume 211 of {\em Graduate Texts in Mathematics}.
\newblock Springer-Verlag, New York, third edition, 2002.
\newblock \href {https://doi.org/10.1007/978-1-4613-0041-0} {\path{doi:10.1007/978-1-4613-0041-0}}.

\bibitem{Liebler}
Robert~A. Liebler.
\newblock The inversion formula.
\newblock {\em J. Combin. Math. Combin. Comput.}, 13:143--160, 1993.

\bibitem{ma}
S.~L. Ma.
\newblock A survey of partial difference sets.
\newblock {\em Des. Codes Cryptogr.}, 4(3):221--261, 1994.
\newblock \href {https://doi.org/10.1007/BF01388454} {\path{doi:10.1007/BF01388454}}.

\bibitem{ott}
Udo Ott.
\newblock On generalized quadrangles with a group of automorphisms acting regularly on the point set, difference sets with {$-1$} as multiplier and a conjecture of {G}hinelli.
\newblock {\em European J. Combin.}, 110:Paper No. 103681, 10, 2023.
\newblock \href {https://doi.org/10.1016/j.ejc.2022.103681} {\path{doi:10.1016/j.ejc.2022.103681}}.

\bibitem{Pike_1999}
David~A. Pike.
\newblock Hamilton decompositions of block-intersection graphs of {S}teiner triple systems.
\newblock {\em Ars Combin.}, 51:143--148, 1999.

\bibitem{pol-dav-smi-swa}
John Polhill, James~A. Davis, Ken~W. Smith, and Eric Swartz.
\newblock Genuinely nonabelian partial difference sets.
\newblock {\em J. Comb. Des.}, 32(7):351--370, 2024.
\newblock \href {https://doi.org/10.1002/jcd.21938} {\path{doi:10.1002/jcd.21938}}.

\bibitem{PonomarenkoRyabov_2025}
Ilia Ponomarenko and Grigory Ryabov.
\newblock Notes on {$B$}-groups.
\newblock {\em Comm. Algebra}, pages 1--5, 2025.
\newblock \href {https://doi.org/10.1080/00927872.2025.2484425} {\path{doi:10.1080/00927872.2025.2484425}}.

\bibitem{Smith}
Ken~W. Smith.
\newblock Non-abelian {H}adamard difference sets.
\newblock {\em J. Combin. Theory Ser. A}, 70(1):144--156, 1995.
\newblock \href {https://doi.org/10.1016/0097-3165(95)90084-5} {\path{doi:10.1016/0097-3165(95)90084-5}}.

\bibitem{Swartz_2015}
Eric Swartz.
\newblock A construction of a partial difference set in the extraspecial groups of order {$p^3$} with exponent {$p^2$}.
\newblock {\em Des. Codes Cryptogr.}, 75(2):237--242, 2015.
\newblock \href {https://doi.org/10.1007/s10623-013-9903-7} {\path{doi:10.1007/s10623-013-9903-7}}.

\bibitem{Swartz_etal_2024}
Eric Swartz, James Davis, John Polhill, and Ken Smith.
\newblock Combinatorial transfer: a new method for constructing infinite families of nonabelian difference sets, partial difference sets, and relative difference sets.
\newblock \url{https://arxiv.org/abs/2407.18385}, 2024.

\bibitem{swartz-tauscheck}
Eric Swartz and Gabrielle Tauscheck.
\newblock Restrictions on parameters of partial difference sets in nonabelian groups.
\newblock {\em J. Comb. Des.}, 29(1):38--51, 2021.
\newblock \href {https://doi.org/10.1002/jcd.21754} {\path{doi:10.1002/jcd.21754}}.

\bibitem{wilson}
Richard~M. Wilson.
\newblock Cyclotomy and difference families in elementary abelian groups.
\newblock {\em J. Number Theory}, 4:17--47, 1972.
\newblock \href {https://doi.org/10.1016/0022-314X(72)90009-1} {\path{doi:10.1016/0022-314X(72)90009-1}}.

\bibitem{yoshiara}
Satoshi Yoshiara.
\newblock A generalized quadrangle with an automorphism group acting regularly on the points.
\newblock {\em Eur. J. Comb.}, 28(2):653--664, 2007.
\newblock \href {https://doi.org/10.1016/j.ejc.2004.11.004} {\path{doi:10.1016/j.ejc.2004.11.004}}.

\end{thebibliography}

\newpage

\appendix

\section{Parameters already ruled out by \texorpdfstring{\cite{swartz-tauscheck}}{[Swartz-Tauscheck]}}
\label{app:A}

The following tables include all parameters $(v, k, \lambda, \mu)$ with $k < (v-1)/2$ listed in Brouwer's tables \cite{brouwertables} that can be ruled out (entirely, in any group) using either Theorem \ref{mod-restriction} or \cite[Corollaries 5.2, 5.3]{swartz-tauscheck} on either the original parameters or those of its complement, excluding those parameters with $v \in \{512, 768, 1024\}$, in which cases the computations are too expensive due to the number of groups on those orders. Note, we do not list both a parameter set and its complement.

The modular restriction described by Theorem \ref{mod-restriction} overlaps significantly with the modular restriction in \cite[Corollaries 5.2, 5.3]{swartz-tauscheck}. That is, if a parameter set is ruled out by Theorem \ref{mod-restriction}, then it is likely ruled out by \cite[Corollaries 5.2, 5.3]{swartz-tauscheck}, and vice versa. So far, we have yet to find a parameter set that Theorem \ref{mod-restriction} rules out, but which \cite[Corollaries 5.2, 5.3]{swartz-tauscheck} does not rule out. For the sake of comprehensiveness, we list all parameter sets which \cite[Corollaries 5.2, 5.3]{swartz-tauscheck} or Theorem \ref{mod-restriction} rule out.

\begin{table}[ht!]
    \centering
    \begin{minipage}{0.25\textwidth}
        \centering
        \begin{tabular}{c|c|c|c}
            $v$ & $k$ & $\lambda$ & $\mu$ \\ 
            \hline
      15&6&1&3\\ 
      28&12&6&4\\ 
      35&16&6&8\\ 
      45&12&3&3\\ 
      45&16&8&4\\ 
      63&30&13&15\\ 
      69&20&7&5\\ 
      77&16&0&4\\ 
      85&14&3&2\\ 
      85&20&3&5\\ 
      85&30&11&10\\ 
      88&27&6&9\\ 
      91&24&12&4\\ 
      99&14&1&2\\ 
      99&42&21&15\\ 
      99&48&22&24\\ 
      105&26&13&4\\ 
      105&32&4&12\\ 
      105&52&21&30\\ 
      115&18&1&3\\ 
      117&36&15&9\\ 
      119&54&21&27\\ 
      133&24&5&4\\ 
      133&32&6&8\\ 
      133&44&15&14\\ 
      143&70&33&35\\ 
      153&32&16&4\\ 
      153&56&19&21\\ 
      175&30&5&5\\ 
      175&66&29&22\\ 
      175&72&20&36\\ 
        \end{tabular}
    \end{minipage}
    \hfill
    \begin{minipage}{0.25\textwidth}
        \centering
        \begin{tabular}{c|c|c|c}
            $v$ & $k$ & $\lambda$ & $\mu$ \\ 
            \hline
      176&25&0&4\\ 
      176&45&18&9\\ 
      176&70&18&34\\ 
      176&70&24&30\\ 
      176&85&48&34\\ 
      189&48&12&12\\ 
       195&96&46&48\\ 
      208&75&30&25\\ 
      208&81&24&36\\ 
      209&52&15&12\\ 
      209&100&45&50\\ 
      217&66&15&22\\ 
      217&88&39&33\\ 
      221&64&24&16\\ 
      225&96&51&33\\ 
      231&30&9&3\\ 
      231&40&20&4\\ 
      231&70&21&21\\ 
      231&90&33&36\\ 
      232&33&2&5\\ 
      232&63&14&18\\ 
      232&77&36&20\\ 
      232&81&30&27\\ 
      235&42&9&7\\ 
      235&52&9&12\\ 
      236&55&18&11\\ 
      245&52&3&13\\ 
      245&64&18&16\\ 
      245&108&39&54\\ 
      247&54&21&9\\ 
      249&88&27&33\\ 
        \end{tabular}
    \end{minipage}
    \hfill
    \begin{minipage}{0.25\textwidth}
        \centering
        \begin{tabular}{c|c|c|c}
            $v$ & $k$ & $\lambda$ & $\mu$ \\ 
            \hline
      255&126&61&63\\ 
      259&42&5&7\\ 
      261&52&11&10\\ 
      261&64&14&16\\ 
      261&80&25&24\\ 
      261&84&39&21\\ 
      265&96&32&36\\ 
      275&112&30&56\\ 
      279&128&52&64\\ 
      285&64&8&16\\ 
      287&126&45&63\\ 
      297&128&64&48\\
      341&70&15&14\\ 
      341&84&19&21\\ 
      341&102&31&30\\ 
      343&102&21&34\\ 
      343&114&45&34\\ 
      344&147&50&72\\ 
      344&168&92&72\\ 
      345&120&35&45\\ 
      345&128&46&48\\ 
      352&36&0&4\\ 
      352&39&6&4\\ 
      352&117&36&40\\ 
      352&156&60&76\\ 
      352&171&90&76\\ 
      357&100&35&25\\ 
      371&120&44&36\\ 
      375&110&25&35\\ 
      375&136&44&52\\ 
      376&105&32&28\\ 
            \end{tabular}
    \end{minipage}
\end{table}

\begin{table}[ht!]
    \centering
    \begin{minipage}{0.25\textwidth}
        \centering
        \begin{tabular}{c|c|c|c}
      376&175&78&84\\ 
      376&180&88&84\\ 
      377&180&81&90\\ 
      391&140&39&56\\ 
      391&182&93&77\\ 
      407&126&45&36\\ 
      411&130&45&39\\ 
      413&112&36&28\\ 
      424&99&26&22\\ 
      425&72&27&9\\ 
      425&160&60&60\\ 
      428&112&21&32\\ 
      429&108&27&27\\ 
      435&56&28&4\\ 
      435&154&53&55\\ 
      435&182&73&78\\ 
      437&100&15&25\\ 
      451&130&33&39\\
      451&156&57&52\\ 
      459&208&82&104\\ 
      475&90&25&15\\ 
      475&96&32&16\\ 
      477&140&31&45\\  
      304&108&42&36\\ 
      319&150&65&75\\ 
      323&160&78&80\\ 
      325&48&24&4\\ 
      325&54&3&10\\ 
      325&60&15&10\\ 
      325&68&3&17\\ 
      325&72&15&16\\ 
      325&144&68&60\\ 
      329&40&3&5\\ 
      477&168&57&60\\ 
      483&240&118&120\\ 
      495&38&1&3\\ 
      495&78&29&9\\ 
      495&104&28&20\\ 
      495&190&53&85\\ 
      495&208&86&88\\ 
      495&234&93&126\\ 
      496&60&30&4\\ 
      496&110&18&26\\ 
      496&135&38&36\\ 
            \end{tabular}
	\end{minipage}
    \hfill
    \begin{minipage}{0.25\textwidth}
        \centering
        \begin{tabular}{c|c|c|c}  
      496&198&80&78\\ 
      496&231&102&112\\ 
      496&240&120&112\\
      508&234&100&114\\ 
      508&247&126&114\\ 
      511&68&15&8\\ 
      511&78&5&13\\ 
      527&256&120&128\\ 
      533&114&25&24\\ 
      533&132&31&33\\ 
      533&154&45&44\\ 
      536&130&48&26\\ 
      539&234&81&117\\ 
      539&250&105&125\\ 
      553&72&11&9\\ 
      559&264&116&132\\ 
      561&60&9&6\\ 
      561&64&32&4\\
      561&140&31&36\\ 
      561&176&55&55\\ 
      561&240&99&105\\ 
      565&240&104&100\\
       568&217&66&93\\ 
      568&252&96&124\\ 
      568&279&150&124\\ 
      575&112&45&16\\ 
      575&126&29&27\\ 
      575&286&141&143\\ 
      583&96&14&16\\ 
      585&72&7&9\\ 
      585&224&88&84\\ 
      592&216&90&72\\ 
      595&66&33&4\\ 
      595&90&5&15\\ 
      595&108&35&16\\ 
      595&114&33&19\\ 
      595&126&21&28\\ 
      595&132&33&28\\ 
      595&144&18&40\\ 
      595&198&81&58\\ 
      595&216&68&84\\ 
      609&224&91&77\\ 
      611&250&105&100\\ 
      615&294&133&147\\ 
        \end{tabular}
    \end{minipage}
    \hfill
    \begin{minipage}{0.25\textwidth}
        \centering
        \begin{tabular}{c|c|c|c} 
      616&75&2&10\\ 
      616&120&20&24\\ 
      616&205&90&57\\
      616&240&89&96\\ 
      616&270&108&126\\ 
      621&60&3&6\\ 
      621&144&24&36\\ 
      621&220&79&77\\ 
      621&300&123&165\\ 
      625&246&119&82\\ 
      637&60&11&5\\ 
      637&96&5&16\\ 
      637&176&60&44\\ 
      637&186&35&62\\ 
      637&252&91&105\\ 
      639&288&112&144\\ 
      640&210&66&70\\ 
      640&213&72&70\\ 
      645&140&31&30\\ 
      645&160&38&40\\ 
      645&184&53&52\\ 
      645&210&85&60\\ 
      649&72&15&7\\ 
      649&216&63&76\\ 
      649&256&108&96\\ 
      664&170&24&50\\ 
      665&280&135&105\\ 
      667&96&0&16\\ 
      675&336&166&168\\ 
      693&192&66&48\\ 
      697&96&20&12\\ 
      697&108&9&18\\ 
      697&348&167&180\\
      703&72&36&4\\ 
      703&182&81&35\\ 
      705&256&80&100\\ 
      705&308&145&126\\ 
      711&70&5&7\\ 
      711&230&85&69\\ 
      715&120&20&20\\ 
      715&154&33&33\\ 
      715&224&48&80\\ 
      715&228&65&76\\ 
      715&266&105&95\\ 
        \end{tabular}
    \end{minipage}
\end{table}

\begin{table}[ht!]
    \centering
    \begin{minipage}{0.25\textwidth}
        \centering
        \begin{tabular}{c|c|c|c} 
      721&192&48&52\\ 
      721&220&69&66\\ 
      721&324&147&144\\
      725&196&63&49\\ 
      729&208&37&68\\ 
      729&248&67&93\\ 
      729&280&127&95\\ 
      731&250&105&75\\ 
      735&318&109&159\\ 
      735&360&172&180\\ 
      736&60&14&4\\ 
      736&105&20&14\\ 
      736&180&68&36\\ 
      736&270&114&90\\ 
      736&294&122&114\\ 
      736&315&106&156\\ 
      736&330&140&154\\ 
      736&350&162&170\\ 
      736&357&176&170\\ 
      736&364&204&156\\ 
      765&176&28&44\\ 
      765&192&48&48\\ 
      771&266&85&95\\ 
      775&150&45&25\\ 
      775&288&98&112\\ 
      779&378&177&189\\ 
      781&290&93&116\\ 
      781&348&167&145\\ 
      783&230&49&75\\ 
      783&390&193&195\\ 
      793&162&21&36\\ 
      793&280&87&105\\ 
      793&330&147&130\\ 
      805&324&123&135\\
      805&336&140&140\\ 
      817&240&83&65\\ 
      825&96&4&12\\ 
      825&128&40&16\\ 
      825&280&75&105\\ 
      833&112&21&14\\ 
      833&130&21&20\\ 
      833&232&81&58\\ 
      833&256&84&76\\ 
      837&76&15&6\\ 
      837&176&40&36\\ 
            \end{tabular}
    \end{minipage}
    \hfill
    \begin{minipage}{0.25\textwidth}
        \centering
        \begin{tabular}{c|c|c|c} 
      837&196&35&49\\ 
      837&286&85&104\\ 
      837&308&127&105\\
      837&396&195&180\\ 
      845&204&43&51\\ 
      845&256&108&64\\ 
      845&396&171&198\\ 
      847&94&21&9\\ 
      847&144&26&24\\ 
      847&270&109&75\\ 
      847&282&81&100\\ 
      847&300&117&100\\ 
      847&360&143&160\\ 
      847&390&161&195\\ 
      848&121&24&16\\ 
      848&231&70&60\\ 
      848&297&96&108\\ 
      848&385&156&190\\
      848&418&222&190\\ 
      851&306&97&117\\ 
      851&360&163&144\\ 
      855&126&21&18\\ 
      855&168&20&36\\ 
      855&182&37&39\\ 
      855&336&150&120\\ 
      856&95&6&11\\ 
      856&120&21&16\\ 
      856&209&72&44\\ 
      861&80&40&4\\ 
      861&140&31&21\\ 
      861&280&105&84\\ 
      861&300&103&105\\ 
      861&380&163&171\\ 
      871&60&5&4\\ 
      871&144&22&24\\ 
      871&348&137&140\\ 
      872&208&54&48\\ 
      875&114&13&15\\ 
      885&260&55&85\\ 
      891&290&109&87\\ 
      891&320&148&96\\ 
      897&126&15&18\\ 
      899&448&222&224\\ 
      901&60&3&4\\ 
      901&200&45&44\\ 
            \end{tabular}
    \end{minipage}
    \hfill
    \begin{minipage}{0.25\textwidth}
        \centering
        \begin{tabular}{c|c|c|c}  
      901&224&54&56\\ 
      901&252&71&70\\ 
      913&192&56&36\\
      923&432&186&216\\ 
      925&108&39&9\\ 
      925&220&75&45\\ 
      925&330&95&130\\ 
      925&336&104&132\\ 
      925&374&123&170\\ 
      925&378&143&162\\ 
      925&384&158&160\\ 
      925&448&192&240\\ 
      928&300&90&100\\ 
      928&441&224&196\\ 
      931&120&11&16\\ 
      931&138&5&23\\ 
      931&162&33&27\\ 
      931&180&29&36\\
      931&248&60&68\\ 
      931&280&84&84\\ 
      931&372&143&152\\ 
      931&390&173&156\\ 
      931&450&241&195\\ 
      935&448&204&224\\ 
      945&64&8&4\\ 
      945&176&31&33\\ 
      949&84&11&7\\ 
      957&256&80&64\\ 
      969&176&13&36\\ 
      969&198&57&36\\ 
      969&392&175&147\\ 
      973&272&96&68\\ 
      976&117&28&12\\ 
      976&273&92&70\\ 
      976&300&128&76\\ 
      976&455&198&224\\ 
      976&480&248&224\\ 
      987&170&49&25\\ 
      989&484&231&242\\ 
      999&448&172&224\\ 
      1001&136&27&17\\ 
      1001&150&13&24\\ 
      1001&160&12&28\\ 
      1001&280&63&84\\ 
      1001&300&103&84\\ 
        \end{tabular}
    \end{minipage}
\end{table}

\begin{table}[ht]
    \begin{minipage}{0.25\textwidth}
        \centering
        \begin{tabular}{c|c|c|c}  
      1001&384&152&144\\ 
      1001&450&207&198\\ 
      1003&300&65&100\\
      1007&486&225&243\\ 
      1015&312&113&88\\ 
      1016&259&42&74\\ 
      1017&344&91&129\\ 
      1023&336&90&120\\ 
      1023&510&253&255\\ 
      1035&88&44&4\\ 
      1035&330&105&105\\ 
      1035&440&196&180\\ 
      1035&462&201&210\\ 
      1035&490&217&245\\ 
      1041&336&115&105\\ 
      1045&234&53&52\\ 
      1045&260&63&65\\ 
      1045&276&83&69\\
      1045&290&81&80\\ 
      1045&324&123&90\\ 
      1045&384&108&160\\ 
      1045&396&143&154\\ 
      1065&266&103&54\\ 
      1065&364&113&130\\ 
      1065&416&172&156\\ 
      1067&208&42&40\\ 
      1071&110&13&11\\ 
      1071&320&94&96\\ 
      1072&252&56&60\\ 
      1072&261&80&58\\ 
      1072&345&102&115\\ 
      1072&391&150&138\\ 
      1073&64&0&4\\ 
      1075&192&44&32\\ 
      1075&210&65&35\\ 
      1079&238&57&51\\ 
      1084&513&232&252\\ 
      1084&532&270&252\\ 
      1085&256&48&64\\ 
      1089&320&67&105\\ 
      1089&408&177&138\\ 
        \end{tabular}
    \end{minipage}
    \hfill
    \begin{minipage}{0.25\textwidth}
        \centering
        \begin{tabular}{c|c|c|c}  
      1105&80&15&5\\ 
      1105&138&11&18\\
      1105&208&39&39\\ 
      1105&336&95&105\\ 
      1105&378&135&126\\ 
      1107&378&117&135\\ 
      1111&110&9&11\\ 
      1112&396&135&144\\ 
      1125&308&103&77\\ 
      1131&320&76&96\\ 
      1135&216&45&40\\ 
      1135&238&45&51\\
      1136&280&108&56\\ 
      1147&216&60&36\\ 
      1155&576&286&288\\ 
      1161&360&99&117\\ 
      1161&380&145&114\\ 
      1177&204&41&34\\ 
      1177&224&36&44\\ 
      1184&546&230&270\\ 
      1184&585&308&270\\ 
      1189&132&11&15\\ 
      1189&216&35&40\\ 
      1189&396&135&130\\ 
      1189&588&287&294\\ 
      1199&550&225&275\\ 
      1208&459&150&189\\ 
      1215&544&208&272\\ 
      1215&574&253&287\\ 
      1215&598&289&299\\ 
      1216&135&6&16\\ 
      1216&144&24&16\\ 
      1216&162&18&22\\ 
      1216&165&24&22\\ 
      1216&270&38&66\\ 
      1216&375&150&100\\ 
      1216&405&144&130\\ 
      1216&540&204&268\\ 
      1216&567&246&280\\ 
      1216&585&276&286\\ 
      1216&594&294&286\\ 
            \end{tabular}
    \end{minipage}
    \hfill
    \begin{minipage}{0.25\textwidth}
        \centering
            \begin{tabular}{c|c|c|c}
      1216&600&312&280\\
      1225&168&35&21\\ 
      1225&264&63&55\\ 
      1225&384&138&112\\ 
      1225&480&200&180\\ 
      1225&520&255&195\\ 
      1233&396&129&126\\ 
      1233&532&231&228\\ 
      1241&160&24&20\\ 
      1241&310&81&76\\ 
      1241&490&189&196\\ 
      1247&126&45&9\\ 
      1251&270&73&54\\ 
      1261&492&171&205\\ 
      1261&574&279&246\\
      1267&186&5&31\\ 
      1269&288&42&72\\ 
      1271&160&48&16\\ 
      1275&98&13&7\\ 
      1275&98&49&4\\ 
      1275&336&134&72\\ 
      1275&350&85&100\\ 
      1275&364&113&100\\ 
      1275&490&185&190\\ 
      1275&490&225&165\\ 
      1275&504&228&180\\ 
      1275&546&281&198\\ 
      1275&572&247&264\\ 
      1285&528&212&220\\ 
      1288&162&36&18\\ 
      1288&195&26&30\\ 
      1288&195&54&25\\ 
      1288&234&80&34\\ 
       1288&495&206&180\\ 
      1288&567&246&252\\ 
      1295&646&321&323\\  
      1216&603&330&268\\ 
      1221&324&99&81\\ 
      1221&500&175&225\\ 
      1221&610&279&330\\ 
      1225&96&48&4\\ 
            \end{tabular}
	\end{minipage}
\end{table}

\FloatBarrier

$\phantom{x}$

\newpage

\section{Parameters for regular PDSs ruled out using Algorithm \ref{CCI}.}
\label{app:B}

The first set of tables consists of all parameters $(v, k, \lambda, \mu)$ with $k < (v-1)/2$ listed in Brouwer's tables \cite{brouwertables} satisfying the requirement that $\Delta$ is divisible by some prime $p$ not dividing $v$, for which we have shown no PDS exists in any group through direct computation using Algorithm \ref{CCI} on either the original parameters or those of the complement. We have only included all parameters with $v < 507$, for otherwise the computations become too expensive. Once again, we do not include complements of listed parameters.

\begin{table}[ht]
    \centering
    \begin{minipage}{0.25\textwidth}
        \centering
        \begin{tabular}{c|c|c|c}
            $v$ & $k$ & $\lambda$ & $\mu$ \\ 
            \hline
		10 & 3 & 0 & 1 \\ 
		26 & 10 & 3 & 4 \\ 
		28 & 12 & 6 & 4 \\ 
		36 & 14 & 7 & 4 \\ 
		40 & 12 & 2 & 4 \\ 
		50 & 21 & 8 & 9 \\ 
		56 & 10 & 0 & 2 \\ 
		63 & 30 & 13 & 15 \\ 
		66 & 20 & 10 & 4 \\ 
		70 & 27 & 12 & 9 \\ 
		78 & 22 & 11 & 4 \\ 
		82 & 36 & 15 & 16 \\ 
		88 & 27 & 6 & 9 \\ 
		96 & 35 & 10 & 14 \\ 
		100 & 33 & 14 & 9 \\ 
		105 & 26 & 13 & 4 \\ 
		105 & 32 & 4 & 12 \\ 
		105 & 40 & 15 & 15 \\ 
		105 & 52 & 21 & 30 \\ 
		112 & 30 & 2 & 10 \\ 
		112 & 36 & 10 & 12 \\ 
		117 & 36 & 15 & 9 \\ 
		120 & 28 & 14 & 4 \\ 
		120 & 42 & 8 & 18 \\ 
		122 & 55 & 24 & 25 \\ 
		126 & 25 & 8 & 4 \\ 
		126 & 50 & 13 & 24 \\ 
		126 & 60 & 33 & 24 \\ 
		130 & 48 & 20 & 16 \\ 
		135 & 64 & 28 & 32 \\ 
		136 & 30 & 15 & 4 \\ 
		136 & 63 & 30 & 28 \\ 
		148 & 63 & 22 & 30 \\ 
		154 & 48 & 12 & 16 \\ 
		154 & 72 & 26 & 40 \\ 
        \end{tabular}
    \end{minipage}
    \hfill
    \begin{minipage}{0.25\textwidth}
        \centering
        \begin{tabular}{c|c|c|c}
            $v$ & $k$ & $\lambda$ & $\mu$ \\ 
            \hline
		156 & 30 & 4 & 6 \\ 
		165 & 36 & 3 & 9 \\ 
		170 & 78 & 35 & 36 \\ 
		171 & 50 & 13 & 15 \\ 
		176 & 25 & 0 & 4 \\ 
		176 & 40 & 12 & 8 \\ 
		176 & 45 & 18 & 9 \\ 
		176 & 49 & 12 & 14 \\ 
		176 & 70 & 18 & 34 \\ 
		176 & 70 & 24 & 30 \\ 
		176 & 85 & 48 & 34 \\ 
		189 & 48 & 12 & 12 \\ 
		189 & 60 & 27 & 15 \\ 
		189 & 88 & 37 & 44 \\ 
		190 & 36 & 18 & 4 \\ 
		190 & 45 & 12 & 10 \\ 
		190 & 84 & 33 & 40 \\ 
		190 & 84 & 38 & 36 \\ 
		190 & 90 & 45 & 40 \\ 
		195 & 96 & 46 & 48 \\ 
		196 & 39 & 2 & 9 \\ 
		196 & 60 & 23 & 16 \\ 
		196 & 81 & 42 & 27 \\ 
		204 & 28 & 2 & 4 \\ 
		204 & 63 & 22 & 18 \\ 
		208 & 45 & 8 & 10 \\ 
		208 & 75 & 30 & 25 \\ 
		208 & 81 & 24 & 36 \\ 
		210 & 38 & 19 & 4 \\ 
		220 & 72 & 22 & 24 \\ 
		220 & 84 & 38 & 28 \\ 
		222 & 51 & 20 & 9 \\ 
		225 & 96 & 51 & 33 \\ 
		226 & 105 & 48 & 49 \\ 
		231 & 30 & 9 & 3 \\ 
        \end{tabular}
    \end{minipage}
    \hfill
    \begin{minipage}{0.25\textwidth}
        \centering
        \begin{tabular}{c|c|c|c}
            $v$ & $k$ & $\lambda$ & $\mu$ \\ 
            \hline
		231 & 40 & 20 & 4 \\ 
		231 & 70 & 21 & 21 \\ 
		231 & 90 & 33 & 36 \\ 
		232 & 33 & 2 & 5 \\ 
		232 & 63 & 14 & 18 \\ 
		232 & 77 & 36 & 20 \\ 
		232 & 81 & 30 & 27 \\ 
		236 & 55 & 18 & 11 \\ 
		238 & 75 & 20 & 25 \\ 
		243 & 66 & 9 & 21 \\ 
		244 & 117 & 60 & 52 \\ 
		246 & 85 & 20 & 34 \\ 
		246 & 105 & 36 & 51 \\ 
		246 & 119 & 64 & 51 \\ 
		260 & 70 & 15 & 20 \\ 
		266 & 45 & 0 & 9 \\ 
		273 & 72 & 21 & 18 \\ 
		273 & 136 & 65 & 70 \\ 
		275 & 112 & 30 & 56 \\ 
		276 & 44 & 22 & 4 \\ 
		276 & 75 & 10 & 24 \\ 
		276 & 75 & 18 & 21 \\ 
		276 & 110 & 52 & 38 \\ 
		276 & 135 & 78 & 54 \\ 
		279 & 128 & 52 & 64 \\ 
		280 & 36 & 8 & 4 \\ 
		280 & 117 & 44 & 52 \\ 
		285 & 64 & 8 & 16 \\ 
		286 & 95 & 24 & 35 \\ 
		286 & 125 & 60 & 50 \\ 
		288 & 105 & 52 & 30 \\ 
		288 & 112 & 36 & 48 \\ 
		290 & 136 & 63 & 64 \\ 
		297 & 40 & 7 & 5 \\ 
		297 & 104 & 31 & 39 \\ 
        \end{tabular}
    \end{minipage}
\end{table}

\begin{table}[ht]
    \centering
    \begin{minipage}{0.25\textwidth}
        \centering
        \begin{tabular}{c|c|c|c}
		297 & 128 & 64 & 48 \\ 
		300 & 46 & 23 & 4 \\ 
		304 & 108 & 42 & 36 \\ 
		306 & 55 & 4 & 11 \\ 
		306 & 60 & 10 & 12 \\ 
		320 & 99 & 18 & 36 \\ 
		322 & 96 & 20 & 32 \\ 
		324 & 68 & 7 & 16 \\ 
		324 & 95 & 34 & 25 \\ 
		330 & 63 & 24 & 9 \\ 
		336 & 80 & 28 & 16 \\ 
		336 & 125 & 40 & 50 \\ 
		340 & 108 & 30 & 36 \\ 
		342 & 33 & 4 & 3 \\ 
		342 & 66 & 15 & 12 \\ 
		343 & 102 & 21 & 34 \\ 
		343 & 114 & 45 & 34 \\ 
		344 & 147 & 50 & 72 \\ 
		344 & 168 & 92 & 72 \\ 
		351 & 70 & 13 & 14 \\ 
		351 & 140 & 49 & 60 \\ 
		351 & 160 & 64 & 80 \\ 
		352 & 26 & 0 & 2 \\ 
		352 & 36 & 0 & 4 \\ 
		352 & 39 & 6 & 4 \\ 
		352 & 108 & 44 & 28 \\ 
		352 & 117 & 36 & 40 \\ 
		352 & 126 & 50 & 42 \\ 
		352 & 156 & 60 & 76 \\ 
		352 & 171 & 90 & 76 \\ 
		357 & 100 & 35 & 25 \\ 
		362 & 171 & 80 & 81 \\ 
		364 & 33 & 2 & 3 \\ 
		364 & 66 & 20 & 10 \\ 
		364 & 88 & 12 & 24 \\ 
		364 & 120 & 38 & 40 \\ 
		364 & 121 & 48 & 36 \\ 
		364 & 165 & 68 & 80 \\ 
		364 & 176 & 90 & 80 \\ 
		372 & 56 & 10 & 8 \\
		375 & 102 & 45 & 21 \\
		375 & 110 & 25 & 35 \\
        \end{tabular}
    \end{minipage}
    \hfill
    \begin{minipage}{0.25\textwidth}
        \centering
        \begin{tabular}{c|c|c|c}
		375 & 136 & 44 & 52 \\
		375 & 182 & 85 & 91 \\ 
		376 & 105 & 32 & 28 \\ 
		376 & 175 & 78 & 84 \\ 
		376 & 180 & 88 & 84 \\ 
		378 & 52 & 1 & 8 \\ 
		378 & 52 & 26 & 4 \\ 
		385 & 60 & 5 & 10 \\ 
		385 & 168 & 77 & 70 \\ 
		392 & 69 & 26 & 9 \\ 
		392 & 153 & 54 & 63 \\ 
		396 & 135 & 30 & 54 \\ 
		396 & 150 & 51 & 60 \\ 
		399 & 198 & 97 & 99 \\ 
		400 & 21 & 2 & 1 \\ 
		400 & 56 & 6 & 8 \\ 
		400 & 133 & 42 & 45 \\ 
		405 & 132 & 63 & 33 \\ 
		405 & 196 & 91 & 98 \\ 
		406 & 54 & 27 & 4 \\ 
		406 & 108 & 30 & 28 \\ 
		406 & 165 & 68 & 66 \\ 
		406 & 189 & 84 & 91 \\ 
		406 & 195 & 96 & 91 \\ 
		408 & 176 & 70 & 80 \\ 
		414 & 63 & 12 & 9 \\ 
		416 & 100 & 36 & 20 \\ 
		416 & 165 & 64 & 66 \\ 
		418 & 147 & 56 & 49 \\ 
		424 & 99 & 26 & 22 \\ 
		428 & 112 & 21 & 32 \\ 
		429 & 108 & 27 & 27 \\ 
		430 & 39 & 8 & 3 \\ 
		430 & 135 & 36 & 45 \\ 
		430 & 165 & 68 & 60 \\ 
		438 & 92 & 31 & 16 \\ 
		441 & 56 & 7 & 7 \\ 
		441 & 190 & 89 & 76 \\
		441 & 220 & 95 & 124 \\
		442 & 210 & 99 & 100 \\
		456 & 65 & 10 & 9 \\
		456 & 80 & 4 & 16 \\
        \end{tabular}
    \end{minipage}
    \hfill
    \begin{minipage}{0.25\textwidth}
        \centering
        \begin{tabular}{c|c|c|c}
		456 & 130 & 24 & 42 \\
		456 & 140 & 40 & 44 \\ 
		456 & 140 & 58 & 36 \\ 
		456 & 175 & 78 & 60 \\ 
		456 & 182 & 73 & 72 \\ 
		456 & 195 & 74 & 90 \\ 
		459 & 208 & 82 & 104 \\ 
		460 & 85 & 18 & 15 \\ 
		460 & 99 & 18 & 22 \\ 
		460 & 147 & 42 & 49 \\ 
		460 & 204 & 78 & 100 \\ 
		460 & 216 & 116 & 88 \\ 
		460 & 225 & 120 & 100 \\ 
		465 & 144 & 43 & 45 \\ 
		470 & 126 & 27 & 36 \\ 
		474 & 165 & 52 & 60 \\ 
		476 & 133 & 42 & 35 \\ 
		476 & 133 & 60 & 28 \\ 
		483 & 240 & 118 & 120 \\ 
		484 & 105 & 14 & 25 \\ 
		484 & 138 & 47 & 36 \\ 
		490 & 165 & 56 & 55 \\ 
		490 & 192 & 92 & 64 \\ 
		494 & 85 & 12 & 15 \\ 
		495 & 38 & 1 & 3 \\ 
		495 & 78 & 29 & 9 \\ 
		495 & 104 & 28 & 20 \\ 
		495 & 190 & 53 & 85 \\ 
		495 & 208 & 86 & 88 \\ 
		495 & 234 & 93 & 126 \\ 
		495 & 238 & 109 & 119 \\ 
		496 & 54 & 4 & 6 \\ 
		496 & 60 & 30 & 4 \\ 
		496 & 110 & 18 & 26 \\ 
		496 & 135 & 38 & 36 \\ 
		496 & 198 & 80 & 78 \\
		496 & 231 & 102 & 112 \\
		496 & 240 & 120 & 112 \\
		498 & 161 & 64 & 46 \\
		506 & 100 & 18 & 20 \\
		507 & 138 & 49 & 33 \\
		507 & 240 & 106 & 120 \\
        \end{tabular}
    \end{minipage}
\end{table}

\FloatBarrier

We have additionally computed all parameters $(v, k, \lambda, \mu)$ listed in \hypertarget{brouwer-table}{\href{https://aeb.win.tue.nl/graphs/srg/srgtab.html}{Brouwer's tables}} with $k < (v-1)/2$ and $507 \le v \le 1300$ such that $\gcd(v, \Delta) = 1$. Of those, the following parameter sets are ruled out by our computations using Algorithm \ref{CCI}.

\begin{table}[ht]
    \centering
    \begin{minipage}{0.25\textwidth}
        \centering
        \begin{tabular}{c|c|c|c}
            $v$ & $k$ & $\lambda$ & $\mu$ \\ 
            \hline
		528 & 62 & 31 & 4 \\ 
		530 & 253 & 120 & 121 \\ 
		532 & 81 & 30 & 9 \\ 
		546 & 125 & 40 & 25 \\ 
		568 & 217 & 66 & 93 \\ 
		585 & 72 & 7 & 9 \\ 
		606 & 105 & 4 & 21 \\ 
		610 & 87 & 32 & 9 \\ 
		615 & 294 & 133 & 147 \\ 
		616 & 205 & 90 & 57 \\ 
		616 & 240 & 89 & 96 \\ 
		626 & 300 & 143 & 144 \\ 
		630 & 68 & 1 & 8 \\ 
		636 & 250 & 95 & 100 \\ 
		645 & 160 & 38 & 40 \\ 
		645 & 184 & 53 & 52 \\ 
		658 & 162 & 51 & 36 \\ 
		666 & 70 & 35 & 4 \\ 
		675 & 336 & 166 & 168 \\ 
		676 & 150 & 23 & 36 \\ 
		676 & 189 & 62 & 49 \\ 
		690 & 318 & 121 & 168 \\ 
		696 & 125 & 10 & 25 \\ 
		700 & 243 & 90 & 81 \\ 
		711 & 70 & 5 & 7 \\ 
		714 & 161 & 40 & 35 \\ 
		714 & 248 & 97 & 80 \\ 
		715 & 228 & 65 & 76 \\ 
		715 & 266 & 105 & 95 \\ 
		726 & 145 & 44 & 25 \\ 
		730 & 351 & 168 & 169 \\ 
		741 & 74 & 37 & 4 \\ 
		742 & 255 & 92 & 85 \\ 
		742 & 351 & 180 & 153 \\ 
		748 & 180 & 53 & 40 \\ 
		754 & 243 & 72 & 81 \\ 
        \end{tabular}
    \end{minipage}
    \hfill
    \begin{minipage}{0.25\textwidth}
        \centering
        \begin{tabular}{c|c|c|c}
            $v$ & $k$ & $\lambda$ & $\mu$ \\ 
            \hline
		760 & 161 & 30 & 35 \\ 
		776 & 124 & 39 & 16 \\ 
		780 & 369 & 158 & 189 \\ 
		782 & 99 & 36 & 9 \\ 
		783 & 390 & 193 & 195 \\ 
		820 & 78 & 39 & 4 \\ 
		825 & 128 & 40 & 16 \\ 
		837 & 76 & 15 & 6 \\ 
		837 & 196 & 35 & 49 \\ 
		842 & 406 & 195 & 196 \\ 
		855 & 168 & 20 & 36 \\ 
		856 & 95 & 6 & 11 \\ 
		856 & 120 & 21 & 16 \\ 
		861 & 80 & 40 & 4 \\ 
		876 & 105 & 38 & 9 \\ 
		900 & 203 & 34 & 49 \\ 
		900 & 248 & 79 & 64 \\ 
		904 & 301 & 78 & 111 \\ 
		904 & 378 & 177 & 144 \\ 
		935 & 448 & 204 & 224 \\ 
		945 & 64 & 8 & 4 \\ 
		946 & 243 & 60 & 63 \\ 
		952 & 351 & 150 & 117 \\ 
		962 & 465 & 224 & 225 \\ 
		987 & 170 & 49 & 25 \\ 
		990 & 86 & 43 & 4 \\ 
		1000 & 117 & 18 & 13 \\ 
		1015 & 312 & 113 & 88 \\ 
		1023 & 510 & 253 & 255 \\ 
		1026 & 451 & 212 & 187 \\ 
		1036 & 405 & 138 & 171 \\ 
		1044 & 175 & 50 & 25 \\ 
		1045 & 234 & 53 & 52 \\ 
		1045 & 260 & 63 & 65 \\ 
		1045 & 276 & 83 & 69 \\
        \end{tabular}
        \vspace{0.47cm}
    \end{minipage}
    \hfill
    \begin{minipage}{0.25\textwidth}
        \centering
        \begin{tabular}{c|c|c|c}
            $v$ & $k$ & $\lambda$ & $\mu$ \\ 
            \hline
		1045 & 290 & 81 & 80 \\ 
		1054 & 363 & 132 & 121 \\ 
		1071 & 110 & 13 & 11 \\ 
		1072 & 345 & 102 & 115 \\ 
		1085 & 256 & 48 & 64 \\ 
		1086 & 155 & 4 & 25 \\ 
		1086 & 210 & 59 & 36 \\ 
		1090 & 528 & 255 & 256 \\ 
		1106 & 403 & 132 & 155 \\ 
		1112 & 396 & 135 & 144 \\ 
		1116 & 245 & 70 & 49 \\ 
		1120 & 363 & 110 & 121 \\ 
		1140 & 469 & 158 & 217 \\ 
		1148 & 481 & 210 & 195 \\ 
		1155 & 576 & 286 & 288 \\ 
		1156 & 264 & 47 & 64 \\ 
		1156 & 315 & 98 & 81 \\ 
		1176 & 94 & 47 & 4 \\ 
		1178 & 99 & 0 & 9 \\ 
		1190 & 123 & 44 & 9 \\ 
		1206 & 490 & 203 & 196 \\ 
		1208 & 459 & 150 & 189 \\ 
		1210 & 156 & 47 & 16 \\ 
		1212 & 525 & 230 & 225 \\ 
		1221 & 610 & 279 & 330 \\ 
		1222 & 481 & 180 & 195 \\ 
		1226 & 595 & 288 & 289 \\ 
		1236 & 95 & 10 & 7 \\ 
		1236 & 500 & 205 & 200 \\ 
		1248 & 203 & 22 & 35 \\ 
		1251 & 270 & 73 & 54 \\ 
		1254 & 203 & 52 & 29 \\ 
		1275 & 98 & 49 & 4 \\ 
		1275 & 546 & 281 & 198 \\ 
		1288 & 195 & 54 & 25 \\
        \end{tabular}
        \vspace{0.47cm}
    \end{minipage}
\end{table}

Finally, we mention again here that the parameter set $(111, 30, 5, 9)$ has been ruled out using a combination of Algorithm \ref{CCI} and Gurobi \cite{gurobi} (Remark \ref{rem:111}).

\newpage

\section{Parameter sets with a valid output from Algorithm \ref{CCI}}
\label{app:C}

We now list all parameter sets $(v, k, \lambda, \mu)$ with $k < (v-1)/2$ listed in Brouwer's tables \cite{brouwertables} satisfying the condition that either $\Delta$ has some distinct prime factor with $v$ and $v \le 507$ or $\gcd(v, \Delta) = 1$ and $v \le 1300$, such that there does exist a valid $\Phi$ function, or such that this function was not computable with the available computational resources.  We note once again that we exclude parameter sets with $v \in \{512, 768, 1024\}$, as well as the parameter set $(507, 176, 70, 56)$, for these computations are too expensive. As mentioned, we do not include complements of listed parameters -- these may be easily generated from what is provided. We begin with the table of all parameters in groups which were too computationally expensive to compute.

\begin{table}[ht]
    \centering
    \begin{minipage}{0.25\textwidth}
        \centering
        \begin{tabular}{c|c}
		 $(v, k, \lambda, \mu)$ & ID  \\ 
		 \hline
		 $(343, 162, 81, 72)$ & 3 \\
		 $(343, 162, 81, 72)$ & 4 \\
        \end{tabular}
    \end{minipage}
    \hfill
    \begin{minipage}{0.25\textwidth}
        \centering
        \begin{tabular}{c|c}
		 $(v, k, \lambda, \mu)$ & ID  \\
		 \hline
		 $(375,176,94,72)$ & 4 \\
		 $(375,176,94,72)$ & 5 \\
        \end{tabular}
    \end{minipage}
     \hfill
    \begin{minipage}{0.25\textwidth}
        \centering
        \begin{tabular}{c|c} 
		 $(v, k, \lambda, \mu)$ & ID  \\
		 \hline
		 $(495,190,85,65)$ & 1 \\
		 $(495,190,85,65)$ & 3 \\
        \end{tabular}
    \end{minipage}
\end{table}

\FloatBarrier

We now provide all successful computations. We have organized the table as follows. First by the parameter set $(v, k, \lambda, \mu)$, then by the column labeled ID, which provides the GAP \cite{GAP4} group ID for which a valid $\Phi$ function has been constructed, then by the column labeled ?, which describes if a PDS for that group on that parameter set has been constructed (in which case a reference is given to its construction, either in this paper or elsewhere) or has not been either constructed or ruled out (?), in which the existence is still an open question.  Finally, we provide the conjugacy class intersections of the possible PDS inside of the group. The intersections are given in the form of a list [$a_1, \dots, a_n$], such that, if $G$ is the group, then the $i$th entry of [$a_1, \dots, a_n$] corresponds to the intersection of the $i$th conjugacy class in ConjugacyClasses(CharacterTable($G$)) with the possible PDS.  (Note that ConjugacyClasses($G$) and ConjugacyClasses(CharacterTable($G$)) may have different conjugacy class orderings.) We have listed all conjugacy class intersections up to the automorphisms of $G$. That is, if a conjugacy class intersection is omitted for a given group, then an automorphically equivalent intersection is listed. We also provide a full .txt file containing all of these parameters in \href{https://github.com/srnelson1/PDS-class-intersections}{this} GitHub repository.

We also mention that the parameter set $(243, 112, 46, 56)$ has over 400 valid, automorphically distinct intersections. To avoid cluttering the paper, these have been placed in a separate file at the same GitHub repository located \href{https://github.com/srnelson1/PDS-class-intersections}{here}.

\begin{center}
	\begin{longtable}{p{3.66cm}|P{0.8cm}|P{0.69cm}|p{10cm}}
		 $(v, k, \lambda, \mu)$ & ID & ?  & Conjugacy Class Intersections \\
		\hline
		 $(21, 10, 3, 6)$  & 1 & \cite{pol-dav-smi-swa} & [ 0, 3, 2, 3, 2 ] \\
		\hline
		 $(27, 10, 1, 5)$  & 3 & \cite{Kantor_1986} & [ 0, 1, 1, 1, 1, 1, 1, 1, 1, 1, 1 ] \\
		   & 4 & \cite{Swartz_2015} & [ 0, 1, 1, 1, 1, 1, 1, 1, 1, 1, 1 ] \\
		\hline
		 $(55, 18, 9, 4)$  & 1 & \cite{pol-dav-smi-swa} & [ 0, 4, 1, 4, 1, 4, 4 ] \\
		\hline
		 $(57, 24, 11, 9)$  & 1 & \ref{Clapham-PDS-family} & [ 0, 9, 1, 9, 1, 1, 1, 1, 1 ] \\
		\hline
		 $(111, 30, 5, 9)$  & 1 & \hyperref[rem:111]{\footnotesize DNE} & [ 0, 9, 1, 9, 1, 1, 1, 1, 1, 1, 1, 1, 1, 1, 1 ] \\
		\hline
		 $(111, 44, 19, 16)$  & 1 & \ref{Fuji-PDS-family} & [ 0, 16, 1, 16, 1, 1, 1, 1, 1, 1, 1, 1, 1, 1, 1 ] \\
		\hline
		 $(125, 28, 3, 7)$  & 3 & \cite{Kantor_1986} & [ 0, 1, 1, 1, 1, 1, 1, 1, 1, 1, 1, 1, 1, 1, 1, 1, 1, 1, 1, 1, 1, 1, 1, 1, 1, 1, 1, 1, 1 ] \\
		   & 4 & \cite{Swartz_2015} & [ 0, 1, 1, 1, 1, 1, 1, 1, 1, 1, 1, 1, 1, 1, 1, 1, 1, 1, 1, 1, 1, 1, 1, 1, 1, 1, 1, 1, 1 ] \\
		\hline
		 $(125, 52, 15, 26)$  & 3 & \cite{pol-dav-smi-swa} & [ 0, 2, 2, 1, 2, 2, 2, 1, 2, 2, 2, 2, 1, 2, 2, 2, 2, 2, 1, 2, 2, 2, 2, 2, 2, 2, 2, 2, 2 ] \\
		   & 4 & ? & [ 0, 2, 2, 1, 2, 2, 2, 1, 2, 2, 2, 2, 1, 2, 2, 2, 2, 2, 1, 2, 2, 2, 2, 2, 2, 2, 2, 2, 2 ] \\
		\hline
		 $(136, 30, 8, 6)$  & 12 & ? & [ 0, 3, 3, 8, 2, 3, 3, 3, 3, 2 ] \\
		\hline
		 $(136, 60, 24, 28)$  & 12 & ? & [ 0, 7, 10, 4, 4, 7, 7, 10, 7, 4 ] \\
		\hline
		 $(147, 66, 25, 33)$  & 3 & \cite{brady} & [ 0, 3, 1, 3, 3, 3, 1, 1, 3, 3, 1, 1, 3, 3, 3, 1, 1, 1, 3, 3, 1, 1, 1, 3, 3, 1, 1, 1, 3, 3, 1, 1, 3, 1, 1 ] \\
		   & 4 & \cite{brady} & [ 0, 21, 3, 3, 21, 1, 3, 1, 1, 3, 1, 1, 1, 1, 1, 1, 1, 1, 1 ] \\
		   & 5 & \cite{brady} & [ 0, 21, 3, 3, 21, 1, 3, 1, 3, 1, 1, 1, 1, 1, 1, 1, 1, 1, 1 ] \\
		\hline
		 $(148, 70, 36, 30)$  & 3 & ? & [ 0, 15, 22, 2, 15, 2, 2, 2, 2, 2, 2, 2, 2 ] \\
		\hline
		 $(155, 42, 17, 9)$  & 1 & \ref{Clapham-PDS-family} & [ 0, 9, 1, 9, 9, 1, 9, 1, 1, 1, 1 ] \\
		\hline
		 $(160, 54, 18, 18)$  & 199 & ? & [ 0, 6, 6, 0, 0, 0, 6, 6, 6, 6, 6, 6, 6 ] \\
		   &  &  & [ 0, 6, 0, 6, 0, 0, 6, 6, 6, 6, 6, 6, 6 ] \\
		\hline
		 $(171, 34, 17, 4)$  & 3 & \cite{pol-dav-smi-swa} & [ 0, 4, 4, 1, 4, 4, 4, 1, 4, 4, 4 ] \\
		\hline
		 $(171, 60, 15, 24)$  & 3 & ? & [ 0, 8, 3, 3, 8, 8, 3, 3, 8, 8, 8 ] \\
		\hline
		 $(183, 52, 11, 16)$  & 1 & ? & [ 0, 16, 1, 16, 1, 1, 1, 1, 1, 1, 1, 1, 1, 1, 1, 1, 1, 1, 1, 1, 1, 1, 1 ] \\
		\hline
		 $(183, 70, 29, 25)$  & 1 & \ref{prop:buratti} & [ 0, 25, 1, 25, 1, 1, 1, 1, 1, 1, 1, 1, 1, 1, 1, 1, 1, 1, 1, 1, 1, 1, 1 ] \\
		\hline
		 $(205, 68, 15, 26)$  & 1 & ? & [ 0, 13, 2, 13, 2, 13, 2, 13, 2, 2, 2, 2, 2 ] \\
		\hline
		 $(205, 96, 50, 40)$  & 1 & ? & [ 0, 20, 2, 20, 2, 20, 2, 20, 2, 2, 2, 2, 2 ] \\
		\hline
		 $(216, 90, 39, 36)$  & 86 & ? & [ 0, 12, 4, 4, 4, 2, 12, 12, 4, 8, 8, 12, 8 ] \\
		   & 87 & ? & [ 0, 8, 8, 4, 2, 2, 8, 16, 16, 8, 2, 8, 8 ] \\
		   & 88 & ? & [ 0, 8, 8, 4, 4, 1, 8, 8, 8, 4, 4, 1, 8, 8, 8, 8 ] \\
		   & 153 & ? & [ 0, 7, 24, 4, 8, 7, 16, 4, 16, 4 ] \\
		   &  &  & [ 0, 2, 24, 4, 8, 2, 16, 9, 16, 9 ] \\
		\hline
		 $(244, 108, 42, 52)$  & 3 & ? & [ 0, 26, 26, 2, 26, 2, 2, 2, 2, 2, 2, 2, 2, 2, 2, 2, 2, 2, 2 ] \\
		\hline
		 $(250, 81, 24, 27)$  & 5 & ? & [ 0, 9, 3, 6, 0, 9, 3, 3, 6, 0, 9, 3, 0, 0, 9, 3, 0, 3, 9, 3, 3, 0 ] \\
		   &  &  & [ 0, 9, 3, 0, 0, 9, 3, 6, 0, 0, 9, 3, 0, 0, 9, 3, 0, 6, 9, 6, 6, 0 ] \\
		   &  &  & [ 0, 9, 3, 0, 0, 9, 3, 3, 0, 0, 9, 3, 3, 3, 9, 3, 3, 3, 9, 3, 3, 3 ] \\
		   &  &  & [ 0, 9, 0, 6, 0, 9, 0, 6, 6, 0, 9, 0, 0, 0, 9, 0, 0, 6, 9, 6, 6, 0 ] \\
		   &  &  & [ 0, 9, 0, 6, 0, 9, 0, 3, 6, 0, 9, 0, 3, 3, 9, 0, 3, 3, 9, 3, 3, 3 ] \\
		   &  &  & [ 0, 9, 0, 0, 0, 9, 0, 6, 0, 0, 9, 0, 3, 3, 9, 0, 3, 6, 9, 6, 6, 3 ] \\
		   & 6 & ? & [ 0, 9, 3, 6, 0, 9, 3, 3, 6, 0, 9, 3, 0, 0, 9, 3, 0, 3, 9, 3, 3, 0 ] \\
		   &  &  & [ 0, 9, 3, 0, 0, 9, 3, 6, 0, 0, 9, 3, 0, 0, 9, 3, 0, 6, 9, 6, 6, 0 ] \\
		   &  &  & [ 0, 9, 3, 0, 0, 9, 3, 3, 0, 0, 9, 3, 3, 3, 9, 3, 3, 3, 9, 3, 3, 3 ] \\
		   &  &  & [ 0, 9, 0, 6, 0, 9, 0, 6, 6, 0, 9, 0, 0, 0, 9, 0, 0, 6, 9, 6, 6, 0 ] \\
		   &  &  & [ 0, 9, 0, 6, 0, 9, 0, 3, 6, 0, 9, 0, 3, 3, 9, 0, 3, 3, 9, 3, 3, 3 ] \\
		   &  &  & [ 0, 9, 0, 0, 0, 9, 0, 6, 0, 0, 9, 0, 3, 3, 9, 0, 3, 6, 9, 6, 6, 3 ] \\
		   & 8 & ? & [ 0, 9, 6, 6, 0, 9, 6, 6, 6, 0, 9, 0, 0, 0, 9, 9, 0, 6, 0, 0, 0, 0 ] \\
		\hline
		 $(253, 42, 21, 4)$  & 1 & \cite{pol-dav-smi-swa} & [ 0, 4, 1, 4, 4, 4, 4, 1, 4, 4, 4, 4, 4 ] \\
		\hline
		 $(253, 112, 36, 60)$  & 1 & \cite{brouwer-maldeghem} & [ 0, 10, 6, 10, 10, 10, 10, 6, 10, 10, 10, 10, 10 ] \\
		\hline
		 $(273, 80, 19, 25)$  & 3 & ? & [ 0, 25, 1, 1, 25, 1, 1, 1, 1, 1, 1, 1, 1, 1, 1, 1, 1, 1, 1, 1, 1, 1, 1, 1, 1, 1, 1, 1, 1, 1, 1, 1, 1 ] \\
		   & 4 & ? & [ 0, 25, 1, 1, 25, 1, 1, 1, 1, 1, 1, 1, 1, 1, 1, 1, 1, 1, 1, 1, 1, 1, 1, 1, 1, 1, 1, 1, 1, 1, 1, 1, 1 ] \\
		\hline
		 $(273, 102, 41, 36)$  & 3 & ? & [ 0, 36, 1, 1, 36, 1, 1, 1, 1, 1, 1, 1, 1, 1, 1, 1, 1, 1, 1, 1, 1, 1, 1, 1, 1, 1, 1, 1, 1, 1, 1, 1, 1 ] \\
		   & 4 & ? & [ 0, 36, 1, 1, 36, 1, 1, 1, 1, 1, 1, 1, 1, 1, 1, 1, 1, 1, 1, 1, 1, 1, 1, 1, 1, 1, 1, 1, 1, 1, 1, 1, 1 ] \\
		\hline
		 $(301, 60, 23, 9)$  & 1 & \ref{Clapham-PDS-family} & [ 0, 9, 1, 9, 1, 9, 1, 9, 9, 9, 1, 1, 1 ] \\
		\hline
		 $(301, 108, 27, 45)$  & 1 & ? & [ 0, 15, 3, 15, 3, 15, 3, 15, 15, 15, 3, 3, 3 ] \\
		\hline
		 $(301, 150, 65, 84)$  & 1 & ? & [ 0, 21, 4, 21, 4, 21, 4, 21, 21, 21, 4, 4, 4 ] \\
		\hline
		 $(305, 76, 27, 16)$  & 1 & \ref{Fuji-PDS-family} & [ 0, 16, 1, 16, 1, 16, 1, 16, 1, 1, 1, 1, 1, 1, 1, 1, 1 ] \\
		\hline
		 $(320, 132, 46, 60)$  & 1635 & ? & [ 0, 16, 8, 30, 2, 4, 16, 16, 16, 8, 16 ] \\
		\hline
		 $(343, 54, 5, 9)$  & 3 & \cite{Kantor_1986} & [ 0, 1, 1, 1, 1, 1, 1, 1, 1, 1, 1, 1, 1, 1, 1, 1, 1, 1, 1, 1, 1, 1, 1, 1, 1, 1, 1, 1, 1, 1, 1, 1, 1, 1, 1, 1, 1, 1, 1, 1, 1, 1, 1, 1, 1, 1, 1, 1, 1, 1, 1, 1, 1, 1, 1 ] \\
		   & 4 & \cite{Swartz_2015} & [ 0, 1, 1, 1, 1, 1, 1, 1, 1, 1, 1, 1, 1, 1, 1, 1, 1, 1, 1, 1, 1, 1, 1, 1, 1, 1, 1, 1, 1, 1, 1, 1, 1, 1, 1, 1, 1, 1, 1, 1, 1, 1, 1, 1, 1, 1, 1, 1, 1, 1, 1, 1, 1, 1, 1 ] \\
		\hline
		 $(343, 150, 53, 75)$  & 3 & \cite{pol-dav-smi-swa} & [ 0, 3, 3, 1, 3, 3, 3, 1, 3, 3, 3, 3, 1, 3, 3, 3, 3, 3, 1, 3, 3, 3, 3, 3, 3, 1, 3, 3, 3, 3, 3, 3, 3, 1, 3, 3, 3, 3, 3, 3, 3, 3, 3, 3, 3, 3, 3, 3, 3, 3, 3, 3, 3, 3, 3 ] \\
		   & 4 & ? & [ 0, 3, 3, 1, 3, 3, 3, 1, 3, 3, 3, 3, 1, 3, 3, 3, 3, 3, 1, 3, 3, 3, 3, 3, 3, 1, 3, 3, 3, 3, 3, 3, 3, 1, 3, 3, 3, 3, 3, 3, 3, 3, 3, 3, 3, 3, 3, 3, 3, 3, 3, 3, 3, 3, 3 ] \\
		\hline
		 $(351, 50, 13, 6)$  & 12 & ? & [ 0, 3, 7, 7, 3, 3, 3, 3, 3, 3, 3, 3, 3, 3, 3 ] \\
		\hline
		 $(351, 50, 25, 4)$  & 12 & \cite{pol-dav-smi-swa} & [ 0, 4, 1, 1, 4, 4, 4, 4, 4, 4, 4, 4, 4, 4, 4 ] \\
		\hline
		 $(351, 110, 37, 33)$  & 3 & ? & [ 0, 9, 1, 1, 1, 9, 3, 3, 1, 1, 1, 1, 1, 1, 3, 3, 3, 3, 3, 1, 1, 1, 1, 1, 1, 1, 3, 3, 3, 3, 3, 1, 1, 1, 1, 1, 1, 1, 3, 3, 3, 1, 1, 1, 1, 1, 3, 1, 1, 1, 1, 1, 1, 1, 1, 1, 1, 1, 1, 1, 1, 1, 1 ] \\
		   &  &  & [ 0, 7, 1, 1, 1, 7, 5, 1, 1, 1, 1, 1, 1, 1, 3, 1, 3, 5, 1, 1, 1, 1, 1, 1, 1, 1, 5, 3, 1, 3, 5, 1, 1, 1, 1, 1, 1, 1, 5, 3, 3, 1, 1, 1, 1, 1, 5, 1, 1, 1, 1, 1, 1, 1, 1, 1, 1, 1, 1, 1, 1, 1, 1 ] \\
		   & 4 & ? & [ 0, 15, 3, 1, 1, 15, 9, 3, 1, 1, 1, 1, 9, 9, 1, 1, 1, 1, 1, 9, 1, 1, 1, 1, 1, 1, 1, 1, 1, 1, 1, 1, 1, 1, 1, 1, 1, 1, 1, 1, 1, 1, 1, 1, 1, 1, 1 ] \\
		   &  &  & [ 0, 9, 3, 1, 1, 9, 15, 3, 1, 1, 1, 1, 9, 9, 1, 1, 1, 1, 1, 15, 1, 1, 1, 1, 1, 1, 1, 1, 1, 1, 1, 1, 1, 1, 1, 1, 1, 1, 1, 1, 1, 1, 1, 1, 1, 1, 1 ] \\
		   & 5 & ? & [ 0, 15, 3, 1, 1, 15, 9, 3, 1, 1, 1, 1, 9, 9, 1, 1, 1, 1, 1, 9, 1, 1, 1, 1, 1, 1, 1, 1, 1, 1, 1, 1, 1, 1, 1, 1, 1, 1, 1, 1, 1, 1, 1, 1, 1, 1, 1 ] \\
		   &  &  & [ 0, 9, 3, 1, 1, 9, 15, 3, 1, 1, 1, 1, 9, 9, 1, 1, 1, 1, 1, 15, 1, 1, 1, 1, 1, 1, 1, 1, 1, 1, 1, 1, 1, 1, 1, 1, 1, 1, 1, 1, 1, 1, 1, 1, 1, 1, 1 ] \\
		   & 7 & ? & [ 0, 15, 3, 1, 1, 15, 9, 3, 1, 1, 1, 1, 9, 9, 1, 1, 1, 1, 1, 9, 1, 1, 1, 1, 1, 1, 1, 1, 1, 1, 1, 1, 1, 1, 1, 1, 1, 1, 1, 1, 1, 1, 1, 1, 1, 1, 1 ] \\
		   & 8 & ? & [ 0, 15, 3, 1, 1, 15, 9, 3, 1, 1, 1, 1, 9, 9, 1, 1, 1, 1, 1, 9, 1, 1, 1, 1, 1, 1, 1, 1, 1, 1, 1, 1, 1, 1, 1, 1, 1, 1, 1, 1, 1, 1, 1, 1, 1, 1, 1 ] \\
		   & 12 & ? & [ 0, 9, 1, 1, 9, 9, 9, 9, 9, 9, 9, 9, 9, 9, 9 ] \\
		   & 13 & ? & [ 0, 9, 1, 1, 1, 9, 3, 3, 1, 1, 1, 1, 1, 1, 3, 3, 3, 3, 3, 1, 1, 1, 1, 1, 1, 1, 3, 3, 3, 3, 3, 1, 1, 1, 1, 1, 1, 1, 3, 3, 3, 1, 1, 1, 1, 1, 3, 1, 1, 1, 1, 1, 1, 1, 1, 1, 1, 1, 1, 1, 1, 1, 1 ] \\
		   &  &  & [ 0, 7, 1, 1, 1, 7, 5, 5, 1, 1, 1, 1, 1, 1, 3, 3, 3, 3, 3, 1, 1, 1, 1, 1, 1, 1, 5, 5, 5, 1, 1, 1, 1, 1, 1, 1, 1, 1, 1, 1, 5, 1, 1, 1, 1, 1, 3, 1, 1, 1, 1, 1, 1, 1, 1, 1, 1, 1, 1, 1, 1, 1, 1 ] \\
		\hline
		 $(351, 112, 43, 32)$  & 12 & ? & [ 0, 9, 2, 2, 9, 9, 9, 9, 9, 9, 9, 9, 9, 9, 9 ] \\
		\hline
		 $(351, 126, 45, 45)$  & 12 & ? & [ 0, 9, 9, 9, 9, 9, 9, 9, 9, 9, 9, 9, 9, 9, 9 ] \\
		\hline
		 $(351, 140, 73, 44)$  & 12 & ? & [ 0, 11, 4, 4, 11, 11, 11, 11, 11, 11, 11, 11, 11, 11, 11 ] \\
		\hline
		 $(363, 170, 73, 85)$  & 2 & ? & [ 0, 55, 3, 1, 55, 3, 1, 1, 3, 1, 1, 1, 3, 1, 3, 1, 3, 3, 1, 3, 1, 1, 1, 1, 3, 1, 1, 1, 1, 1, 1, 1, 1, 3, 1, 1, 1, 1, 1, 1, 1, 1, 1 ] \\
		\hline
		 $(381, 114, 29, 36)$  & 1 & ? & [ 0, 36, 1, 36, 1, 1, 1, 1, 1, 1, 1, 1, 1, 1, 1, 1, 1, 1, 1, 1, 1, 1, 1, 1, 1, 1, 1, 1, 1, 1, 1, 1, 1, 1, 1, 1, 1, 1, 1, 1, 1, 1, 1, 1, 1 ] \\
		\hline
		 $(381, 140, 55, 49)$  & 1 & ? & [ 0, 49, 1, 49, 1, 1, 1, 1, 1, 1, 1, 1, 1, 1, 1, 1, 1, 1, 1, 1, 1, 1, 1, 1, 1, 1, 1, 1, 1, 1, 1, 1, 1, 1, 1, 1, 1, 1, 1, 1, 1, 1, 1, 1, 1 ] \\
		\hline
		 $(400, 156, 74, 52)$  & 206 & ? & [ 0, 20, 20, 10, 10, 2, 20, 4, 20, 10, 4, 20, 16 ] \\
		   & 207 & ? & [ 0, 10, 4, 4, 10, 2, 20, 20, 10, 20, 16, 16, 2, 4, 2, 16 ] \\
		\hline
		 $(405, 96, 18, 24)$  & 15 & ? & [ 0, 18, 4, 4, 0, 2, 18, 4, 2, 0, 0, 4, 0, 2, 0, 18, 0, 0, 2, 0, 18 ] \\
		   &  &  & [ 0, 18, 4, 4, 0, 0, 18, 4, 0, 0, 2, 4, 0, 0, 2, 18, 2, 2, 0, 0, 18 ] \\
		   &  &  & [ 0, 18, 4, 4, 0, 0, 18, 4, 0, 0, 0, 4, 4, 0, 0, 18, 0, 0, 0, 4, 18 ] \\
		   &  &  & [ 0, 18, 4, 2, 4, 0, 18, 4, 0, 4, 0, 2, 0, 0, 0, 18, 2, 2, 0, 0, 18 ] \\
		   &  &  & [ 0, 18, 4, 2, 2, 2, 18, 4, 0, 2, 0, 2, 2, 2, 0, 18, 0, 0, 0, 2, 18 ] \\
		   &  &  & [ 0, 18, 4, 2, 0, 0, 18, 4, 2, 0, 2, 2, 2, 0, 2, 18, 0, 0, 2, 2, 18 ] \\
		   &  &  & [ 0, 18, 4, 2, 0, 0, 18, 4, 2, 0, 0, 2, 0, 0, 0, 18, 4, 4, 2, 0, 18 ] \\
		   &  &  & [ 0, 18, 4, 0, 2, 0, 18, 4, 0, 2, 0, 0, 2, 0, 0, 18, 4, 4, 0, 2, 18 ] \\
		   &  &  & [ 0, 18, 2, 2, 2, 0, 18, 2, 2, 2, 0, 2, 2, 0, 0, 18, 2, 2, 2, 2, 18 ] \\
		\hline
		 $(441, 88, 35, 13)$  & 9 & ? & [ 0, 1, 1, 1, 1, 1, 7, 3, 1, 3, 7, 7, 3, 7, 3, 1, 1, 7, 3, 3, 7, 7, 3, 3, 7 ] \\
		\hline
		 $(441, 152, 43, 57)$  & 9 & ? & [ 0, 5, 5, 1, 1, 5, 19, 7, 5, 7, 3, 11, 7, 11, 7, 1, 1, 19, 7, 7, 3, 3, 7, 7, 3 ] \\
		   &  &  & [ 0, 5, 5, 1, 1, 5, 11, 7, 5, 7, 3, 19, 7, 19, 7, 1, 1, 11, 7, 7, 3, 3, 7, 7, 3 ] \\
		\hline
		 $(441, 176, 85, 60)$  & 9 & ? & [ 0, 6, 6, 1, 1, 6, 20, 7, 6, 7, 3, 20, 7, 20, 7, 1, 1, 20, 7, 7, 3, 3, 7, 7, 3 ] \\
		\hline
		 $(441, 184, 87, 69)$  & 9 & ? & [ 0, 3, 3, 3, 3, 3, 21, 9, 3, 9, 1, 21, 9, 21, 9, 3, 3, 21, 9, 9, 1, 1, 9, 9, 1 ] \\
		\hline
		 $(448, 150, 50, 50)$  & 178 & ? & [ 0, 20, 5, 0, 20, 5, 5, 0, 5, 0, 5, 5, 20, 20, 20, 20 ] \\
		   & 179 & ? & [ 0, 20, 15, 0, 20, 15, 20, 20, 20, 20 ] \\
		\hline
		 $(448, 162, 66, 54)$  & 178 & ? & [ 0, 24, 6, 6, 24, 0, 0, 0, 0, 0, 6, 0, 24, 24, 24, 24 ] \\
		   &  &  & [ 0, 24, 3, 6, 24, 3, 0, 0, 0, 0, 3, 3, 24, 24, 24, 24 ] \\
		   & 179 & ? & [ 0, 24, 6, 6, 24, 6, 24, 24, 24, 24 ] \\
		   & 1393 & ? & [ 0, 24, 6, 6, 24, 6, 0, 0, 0, 0, 24, 0, 0, 24, 24, 24 ] \\
		   & 1394 & ? & [ 0, 24, 6, 6, 24, 6, 0, 0, 0, 0, 0, 0, 24, 24, 24, 24 ] \\
		   &  &  & [ 0, 24, 6, 0, 24, 6, 6, 0, 0, 0, 0, 0, 24, 24, 24, 24 ] \\
		   &  &  & [ 0, 24, 0, 0, 24, 6, 6, 6, 0, 0, 0, 0, 24, 24, 24, 24 ] \\
		   &  &  & [ 0, 24, 0, 0, 24, 6, 6, 0, 6, 0, 0, 0, 24, 24, 24, 24 ] \\
		   &  &  & [ 0, 24, 0, 0, 24, 6, 6, 0, 0, 0, 6, 0, 24, 24, 24, 24 ] \\
		\hline
		 $(465, 58, 29, 4)$  & 1 & \cite{pol-dav-smi-swa} & [ 0, 4, 4, 1, 4, 4, 4, 4, 4, 4, 1, 4, 4, 4, 4, 4, 4 ] \\
		\hline
		 $(465, 192, 72, 84)$  & 1 & ? & [ 0, 6, 14, 6, 6, 14, 14, 14, 14, 14, 6, 14, 14, 14, 14, 14, 14 ] \\
		\hline
		 $(497, 186, 55, 78)$  & 1 & ? & [ 0, 26, 3, 26, 3, 26, 26, 3, 26, 3, 26, 3, 3, 3, 3, 3, 3 ] \\
		\hline
		 $(497, 240, 127, 105)$  & 1 & ? & [ 0, 35, 3, 35, 3, 35, 35, 3, 35, 3, 35, 3, 3, 3, 3, 3, 3 ] \\
		\hline
		 $(505, 84, 3, 16)$  & 1 & ? & [ 0, 1, 1, 1, 1, 1, 1, 1, 1, 1, 1, 1, 1, 1, 1, 1, 1, 1, 1, 1, 1, 16, 16, 16, 16 ] \\
		\hline
		 $(505, 120, 39, 25)$  & 1 & ? & [ 0, 1, 1, 1, 1, 1, 1, 1, 1, 1, 1, 1, 1, 1, 1, 1, 1, 1, 1, 1, 1, 25, 25, 25, 25 ] \\
		\hline
		 $(505, 180, 53, 70)$  & 1 & ? & [ 0, 2, 2, 2, 2, 2, 2, 2, 2, 2, 2, 2, 2, 2, 2, 2, 2, 2, 2, 2, 2, 35, 35, 35, 35 ] \\
		\hline
		 $(505, 224, 108, 92)$  & 1 & ? & [ 0, 2, 2, 2, 2, 2, 2, 2, 2, 2, 2, 2, 2, 2, 2, 2, 2, 2, 2, 2, 2, 46, 46, 46, 46 ] \\
		\hline
		 $(507, 154, 41, 49)$  & 1 & ? & [ 0, 1, 1, 1, 1, 1, 1, 1, 1, 1, 1, 1, 1, 1, 1, 1, 1, 1, 1, 1, 1, 1, 1, 1, 1, 1, 1, 1, 1, 1, 1, 1, 1, 1, 1, 1, 1, 1, 1, 1, 1, 1, 1, 1, 1, 1, 1, 1, 1, 1, 1, 1, 1, 1, 1, 1, 1, 49, 49 ] \\
		   & 4 & ? & [ 0, 1, 1, 1, 1, 1, 1, 1, 1, 1, 1, 1, 1, 1, 1, 1, 1, 1, 1, 1, 1, 1, 1, 1, 1, 1, 1, 1, 1, 1, 1, 1, 1, 1, 1, 1, 1, 1, 1, 1, 1, 1, 1, 1, 1, 1, 1, 1, 1, 1, 1, 1, 1, 1, 1, 1, 1, 49, 49 ] \\
		   & 5 & ? & [ 0, 1, 1, 1, 1, 1, 1, 1, 1, 1, 1, 1, 1, 1, 1, 1, 1, 1, 1, 1, 1, 1, 1, 1, 1, 1, 1, 1, 1, 1, 1, 1, 1, 1, 1, 1, 1, 1, 1, 1, 1, 1, 1, 1, 1, 1, 1, 1, 1, 1, 1, 1, 1, 1, 1, 1, 1, 49, 49 ] \\
		\hline
		 $(507, 176, 70, 56)$  & 3 & ? & [ 0, 0, 0, 0, 0, 0, 2, 2, 2, 0, 0, 0, 2, 2, 2, 0, 2, 0, 2, 2, 0, 2, 2, 0, 2, 0, 2, 2, 2, 0, 0, 0, 2, 2, 2, 0, 2, 0, 2, 2, 0, 2, 2, 0, 2, 0, 2, 2, 2, 0, 0, 0, 2, 2, 2, 0, 2, 0, 2, 2, 0, 2, 2, 0, 2, 4, 4, 4, 4, 4, 4, 4, 4, 4, 4, 4, 4, 4, 4, 4, 4, 4, 4, 4, 4, 4, 4, 4, 4, 4, 4 ] \\
		   & 4 & ? & [ 0, 2, 2, 2, 2, 2, 2, 2, 2, 2, 2, 2, 2, 0, 0, 0, 0, 0, 2, 0, 2, 2, 2, 2, 2, 2, 2, 0, 0, 0, 0, 2, 0, 0, 2, 2, 2, 2, 2, 2, 2, 0, 0, 0, 2, 0, 0, 0, 2, 2, 2, 2, 2, 2, 2, 0, 0, 52, 52 ] \\
		   &  &  & [ 0, 2, 2, 2, 2, 2, 2, 2, 2, 2, 2, 2, 0, 2, 0, 0, 0, 0, 2, 0, 2, 2, 2, 2, 2, 2, 0, 2, 0, 0, 0, 2, 0, 0, 2, 2, 2, 2, 2, 2, 0, 2, 0, 0, 2, 0, 0, 0, 2, 2, 2, 2, 2, 2, 0, 2, 0, 52, 52 ] \\
		   &  &  & [ 0, 2, 2, 2, 2, 2, 2, 2, 2, 2, 2, 0, 2, 2, 0, 0, 0, 0, 2, 0, 2, 2, 2, 2, 2, 0, 2, 2, 0, 0, 0, 2, 0, 0, 2, 2, 2, 2, 2, 0, 2, 2, 0, 0, 2, 0, 0, 0, 2, 2, 2, 2, 2, 0, 2, 2, 0, 52, 52 ] \\
		   & 5 & ? & [ 0, 0, 0, 0, 0, 0, 2, 2, 2, 0, 2, 2, 2, 0, 2, 2, 2, 0, 0, 0, 2, 2, 2, 0, 2, 2, 2, 0, 2, 2, 2, 0, 2, 0, 2, 2, 2, 0, 2, 2, 2, 0, 2, 2, 0, 2, 2, 0, 2, 2, 2, 0, 2, 2, 2, 0, 2, 52, 52 ] \\
		\hline
		 $(507, 184, 71, 64)$  & 1 & ? & [ 0, 1, 1, 1, 1, 1, 1, 1, 1, 1, 1, 1, 1, 1, 1, 1, 1, 1, 1, 1, 1, 1, 1, 1, 1, 1, 1, 1, 1, 1, 1, 1, 1, 1, 1, 1, 1, 1, 1, 1, 1, 1, 1, 1, 1, 1, 1, 1, 1, 1, 1, 1, 1, 1, 1, 1, 1, 64, 64 ] \\
		   & 4 & ? & [ 0, 1, 1, 1, 1, 1, 1, 1, 1, 1, 1, 1, 1, 1, 1, 1, 1, 1, 1, 1, 1, 1, 1, 1, 1, 1, 1, 1, 1, 1, 1, 1, 1, 1, 1, 1, 1, 1, 1, 1, 1, 1, 1, 1, 1, 1, 1, 1, 1, 1, 1, 1, 1, 1, 1, 1, 1, 64, 64 ] \\
		   & 5 & ? & [ 0, 1, 1, 1, 1, 1, 1, 1, 1, 1, 1, 1, 1, 1, 1, 1, 1, 1, 1, 1, 1, 1, 1, 1, 1, 1, 1, 1, 1, 1, 1, 1, 1, 1, 1, 1, 1, 1, 1, 1, 1, 1, 1, 1, 1, 1, 1, 1, 1, 1, 1, 1, 1, 1, 1, 1, 1, 64, 64 ] \\
		\hline
		 $(651, 200, 55, 64)$  & 3 & ? & [ 0, 1, 1, 1, 1, 1, 1, 1, 1, 1, 1, 1, 1, 1, 1, 1, 1, 1, 1, 1, 1, 1, 1, 1, 1, 1, 1, 1, 1, 1, 1, 1, 1, 1, 1, 1, 1, 1, 1, 1, 1, 1, 1, 1, 1, 1, 1, 1, 1, 1, 1, 1, 1, 1, 1, 1, 1, 1, 1, 1, 1, 1, 1, 1, 1, 1, 1, 1, 1, 1, 1, 1, 1, 64, 64 ] \\
		   & 4 & ? & [ 0, 1, 1, 1, 1, 1, 1, 1, 1, 1, 1, 1, 1, 1, 1, 1, 1, 1, 1, 1, 1, 1, 1, 1, 1, 1, 1, 1, 1, 1, 1, 1, 1, 1, 1, 1, 1, 1, 1, 1, 1, 1, 1, 1, 1, 1, 1, 1, 1, 1, 1, 1, 1, 1, 1, 1, 1, 1, 1, 1, 1, 1, 1, 1, 1, 1, 1, 1, 1, 1, 1, 1, 1, 64, 64 ] \\
		\hline
		 $(651, 234, 89, 81)$  & 3 & ? & [ 0, 1, 1, 1, 1, 1, 1, 1, 1, 1, 1, 1, 1, 1, 1, 1, 1, 1, 1, 1, 1, 1, 1, 1, 1, 1, 1, 1, 1, 1, 1, 1, 1, 1, 1, 1, 1, 1, 1, 1, 1, 1, 1, 1, 1, 1, 1, 1, 1, 1, 1, 1, 1, 1, 1, 1, 1, 1, 1, 1, 1, 1, 1, 1, 1, 1, 1, 1, 1, 1, 1, 1, 1, 81, 81 ] \\
		   & 4 & ? & [ 0, 1, 1, 1, 1, 1, 1, 1, 1, 1, 1, 1, 1, 1, 1, 1, 1, 1, 1, 1, 1, 1, 1, 1, 1, 1, 1, 1, 1, 1, 1, 1, 1, 1, 1, 1, 1, 1, 1, 1, 1, 1, 1, 1, 1, 1, 1, 1, 1, 1, 1, 1, 1, 1, 1, 1, 1, 1, 1, 1, 1, 1, 1, 1, 1, 1, 1, 1, 1, 1, 1, 1, 1, 81, 81 ] \\
		\hline
		 $(657, 256, 80, 112)$  & 3 & ? & [ 0, 4, 4, 4, 4, 4, 4, 4, 4, 28, 28, 28, 28, 28, 28, 28, 28 ] \\
		\hline
		 $(657, 328, 147, 180)$  & 3 & ? & [ 0, 5, 5, 5, 5, 5, 5, 5, 5, 36, 36, 36, 36, 36, 36, 36, 36 ] \\
		\hline
		 $(689, 256, 120, 80)$  & 1 & ? & [ 0, 4, 4, 4, 4, 20, 20, 20, 20, 20, 20, 20, 20, 20, 20, 20, 20 ] \\
		\hline
		 $(737, 96, 35, 9)$  & 1 & \ref{Clapham-PDS-family} & [ 0, 1, 1, 1, 1, 1, 1, 9, 9, 9, 9, 9, 9, 9, 9, 9, 9 ] \\
		\hline
		 $(755, 130, 9, 25)$  & 1 & ? & [ 0, 1, 1, 1, 1, 1, 1, 1, 1, 1, 1, 1, 1, 1, 1, 1, 1, 1, 1, 1, 1, 1, 1, 1, 1, 1, 1, 1, 1, 1, 1, 25, 25, 25, 25 ] \\
		\hline
		 $(755, 174, 53, 36)$  & 1 & ? & [ 0, 1, 1, 1, 1, 1, 1, 1, 1, 1, 1, 1, 1, 1, 1, 1, 1, 1, 1, 1, 1, 1, 1, 1, 1, 1, 1, 1, 1, 1, 1, 36, 36, 36, 36 ] \\
		\hline
		 $(813, 252, 71, 81)$  & 1 & ? & [ 0, 1, 1, 1, 1, 1, 1, 1, 1, 1, 1, 1, 1, 1, 1, 1, 1, 1, 1, 1, 1, 1, 1, 1, 1, 1, 1, 1, 1, 1, 1, 1, 1, 1, 1, 1, 1, 1, 1, 1, 1, 1, 1, 1, 1, 1, 1, 1, 1, 1, 1, 1, 1, 1, 1, 1, 1, 1, 1, 1, 1, 1, 1, 1, 1, 1, 1, 1, 1, 1, 1, 1, 1, 1, 1, 1, 1, 1, 1, 1, 1, 1, 1, 1, 1, 1, 1, 1, 1, 1, 1, 81, 81 ] \\
		\hline
		 $(813, 290, 109, 100)$  & 1 & ? & [ 0, 1, 1, 1, 1, 1, 1, 1, 1, 1, 1, 1, 1, 1, 1, 1, 1, 1, 1, 1, 1, 1, 1, 1, 1, 1, 1, 1, 1, 1, 1, 1, 1, 1, 1, 1, 1, 1, 1, 1, 1, 1, 1, 1, 1, 1, 1, 1, 1, 1, 1, 1, 1, 1, 1, 1, 1, 1, 1, 1, 1, 1, 1, 1, 1, 1, 1, 1, 1, 1, 1, 1, 1, 1, 1, 1, 1, 1, 1, 1, 1, 1, 1, 1, 1, 1, 1, 1, 1, 1, 1, 100, 100 ] \\
		\hline
		 $(889, 222, 35, 62)$  & 1 & ? & [ 0, 2, 2, 2, 2, 2, 2, 2, 2, 2, 2, 2, 2, 2, 2, 2, 2, 2, 2, 31, 31, 31, 31, 31, 31 ] \\
		\hline
		 $(889, 288, 112, 84)$  & 1 & ? & [ 0, 2, 2, 2, 2, 2, 2, 2, 2, 2, 2, 2, 2, 2, 2, 2, 2, 2, 2, 42, 42, 42, 42, 42, 42 ] \\
		\hline
		 $(903, 82, 41, 4)$  & 1 & \cite{pol-dav-smi-swa} & [ 0, 1, 1, 4, 4, 4, 4, 4, 4, 4, 4, 4, 4, 4, 4, 4, 4, 4, 4, 4, 4, 4, 4 ] \\
		\hline
		 $(981, 140, 43, 16)$  & 3 & \ref{Fuji-PDS-family} & [ 0, 1, 1, 1, 1, 1, 1, 1, 1, 1, 1, 1, 1, 16, 16, 16, 16, 16, 16, 16, 16 ] \\
		\hline
		 $(981, 392, 133, 172)$  & 3 & ? & [ 0, 4, 4, 4, 4, 4, 4, 4, 4, 4, 4, 4, 4, 43, 43, 43, 43, 43, 43, 43, 43 ] \\
		\hline
		 $(981, 480, 254, 216)$  & 3 & ? & [ 0, 4, 4, 4, 4, 4, 4, 4, 4, 4, 4, 4, 4, 54, 54, 54, 54, 54, 54, 54, 54 ] \\
		\hline
		 $(993, 310, 89, 100)$  & 1 & ? & [ 0, 1, 1, 1, 1, 1, 1, 1, 1, 1, 1, 1, 1, 1, 1, 1, 1, 1, 1, 1, 1, 1, 1, 1, 1, 1, 1, 1, 1, 1, 1, 1, 1, 1, 1, 1, 1, 1, 1, 1, 1, 1, 1, 1, 1, 1, 1, 1, 1, 1, 1, 1, 1, 1, 1, 1, 1, 1, 1, 1, 1, 1, 1, 1, 1, 1, 1, 1, 1, 1, 1, 1, 1, 1, 1, 1, 1, 1, 1, 1, 1, 1, 1, 1, 1, 1, 1, 1, 1, 1, 1, 1, 1, 1, 1, 1, 1, 1, 1, 1, 1, 1, 1, 1, 1, 1, 1, 1, 1, 1, 1, 100, 100 ] \\
		\hline
		 $(993, 352, 131, 121)$  & 1 & ? & [ 0, 1, 1, 1, 1, 1, 1, 1, 1, 1, 1, 1, 1, 1, 1, 1, 1, 1, 1, 1, 1, 1, 1, 1, 1, 1, 1, 1, 1, 1, 1, 1, 1, 1, 1, 1, 1, 1, 1, 1, 1, 1, 1, 1, 1, 1, 1, 1, 1, 1, 1, 1, 1, 1, 1, 1, 1, 1, 1, 1, 1, 1, 1, 1, 1, 1, 1, 1, 1, 1, 1, 1, 1, 1, 1, 1, 1, 1, 1, 1, 1, 1, 1, 1, 1, 1, 1, 1, 1, 1, 1, 1, 1, 1, 1, 1, 1, 1, 1, 1, 1, 1, 1, 1, 1, 1, 1, 1, 1, 1, 1, 121, 121 ] \\
		\hline
		 $(1027, 114, 41, 9)$  & 1 & \ref{Clapham-PDS-family} & [ 0, 1, 1, 1, 1, 1, 1, 9, 9, 9, 9, 9, 9, 9, 9, 9, 9, 9, 9 ] \\
		\hline
		 $(1027, 342, 81, 130)$  & 1 & ? & [ 0, 5, 5, 5, 5, 5, 5, 26, 26, 26, 26, 26, 26, 26, 26, 26, 26, 26, 26 ] \\
		\hline
		 $(1027, 450, 225, 175)$  & 1 & ? & [ 0, 5, 5, 5, 5, 5, 5, 35, 35, 35, 35, 35, 35, 35, 35, 35, 35, 35, 35 ] \\
		\hline
		 $(1055, 186, 17, 36)$  & 1 & ? & [ 0, 1, 1, 1, 1, 1, 1, 1, 1, 1, 1, 1, 1, 1, 1, 1, 1, 1, 1, 1, 1, 1, 1, 1, 1, 1, 1, 1, 1, 1, 1, 1, 1, 1, 1, 1, 1, 1, 1, 1, 1, 1, 1, 36, 36, 36, 36 ] \\
		\hline
		 $(1055, 238, 69, 49)$  & 1 & ? & [ 0, 1, 1, 1, 1, 1, 1, 1, 1, 1, 1, 1, 1, 1, 1, 1, 1, 1, 1, 1, 1, 1, 1, 1, 1, 1, 1, 1, 1, 1, 1, 1, 1, 1, 1, 1, 1, 1, 1, 1, 1, 1, 1, 49, 49, 49, 49 ] \\
		\hline
		 $(1081, 90, 45, 4)$  & 1 & \cite{pol-dav-smi-swa} & [ 0, 1, 1, 4, 4, 4, 4, 4, 4, 4, 4, 4, 4, 4, 4, 4, 4, 4, 4, 4, 4, 4, 4, 4, 4 ] \\
		\hline
		 $(1081, 486, 177, 252)$  & 1 & ? & [ 0, 12, 12, 21, 21, 21, 21, 21, 21, 21, 21, 21, 21, 21, 21, 21, 21, 21, 21, 21, 21, 21, 21, 21, 21 ] \\
		\hline
		 $(1191, 374, 109, 121)$  & 1 & ? & [ 0, 1, 1, 1, 1, 1, 1, 1, 1, 1, 1, 1, 1, 1, 1, 1, 1, 1, 1, 1, 1, 1, 1, 1, 1, 1, 1, 1, 1, 1, 1, 1, 1, 1, 1, 1, 1, 1, 1, 1, 1, 1, 1, 1, 1, 1, 1, 1, 1, 1, 1, 1, 1, 1, 1, 1, 1, 1, 1, 1, 1, 1, 1, 1, 1, 1, 1, 1, 1, 1, 1, 1, 1, 1, 1, 1, 1, 1, 1, 1, 1, 1, 1, 1, 1, 1, 1, 1, 1, 1, 1, 1, 1, 1, 1, 1, 1, 1, 1, 1, 1, 1, 1, 1, 1, 1, 1, 1, 1, 1, 1, 1, 1, 1, 1, 1, 1, 1, 1, 1, 1, 1, 1, 1, 1, 1, 1, 1, 1, 1, 1, 1, 1, 121, 121 ] \\
		\hline
		 $(1191, 420, 155, 144)$  & 1 & ? & [ 0, 1, 1, 1, 1, 1, 1, 1, 1, 1, 1, 1, 1, 1, 1, 1, 1, 1, 1, 1, 1, 1, 1, 1, 1, 1, 1, 1, 1, 1, 1, 1, 1, 1, 1, 1, 1, 1, 1, 1, 1, 1, 1, 1, 1, 1, 1, 1, 1, 1, 1, 1, 1, 1, 1, 1, 1, 1, 1, 1, 1, 1, 1, 1, 1, 1, 1, 1, 1, 1, 1, 1, 1, 1, 1, 1, 1, 1, 1, 1, 1, 1, 1, 1, 1, 1, 1, 1, 1, 1, 1, 1, 1, 1, 1, 1, 1, 1, 1, 1, 1, 1, 1, 1, 1, 1, 1, 1, 1, 1, 1, 1, 1, 1, 1, 1, 1, 1, 1, 1, 1, 1, 1, 1, 1, 1, 1, 1, 1, 1, 1, 1, 1, 144, 144 ] \\
		\hline
		 $(1205, 448, 150, 176)$  & 1 & ? & [ 0, 2, 2, 2, 2, 2, 2, 2, 2, 2, 2, 2, 2, 2, 2, 2, 2, 2, 2, 2, 2, 2, 2, 2, 2, 2, 2, 2, 2, 2, 2, 2, 2, 2, 2, 2, 2, 2, 2, 2, 2, 2, 2, 2, 2, 2, 2, 2, 2, 88, 88, 88, 88 ] \\
		\hline
		 $(1205, 516, 235, 210)$  & 1 & ? & [ 0, 2, 2, 2, 2, 2, 2, 2, 2, 2, 2, 2, 2, 2, 2, 2, 2, 2, 2, 2, 2, 2, 2, 2, 2, 2, 2, 2, 2, 2, 2, 2, 2, 2, 2, 2, 2, 2, 2, 2, 2, 2, 2, 2, 2, 2, 2, 2, 2, 105, 105, 105, 105 ] \\
		\hline
	\end{longtable}
\end{center}

\end{document}